\newtheorem{thm}{Theorem}[section]
\newtheorem{lem}[thm]{Lemma}
\newtheorem{cor}[thm]{Corollary}
\theoremstyle{definition}
\theoremstyle{remark}
\newtheorem{remark}[thm]{Remark}
\numberwithin{equation}{section}
\newcommand{\R}{\mathbf{R}}
\newcommand{\N}{\mathbf{N}}
\newcommand{\C}{\mathbf{C}}
\newcommand{\Z}{\mathbf{Z}}
\newcommand{\Mod}[1]{\ (\textup{mod}\ #1)}
\providecommand{\SL}{\operatorname{SL}}
\begin{document}

\title[]{Spectral decomposition formula and moments of symmetric square $L$-functions}

\author{Olga  Balkanova}
\address{Steklov Mathematical Institute of Russian Academy of Sciences,  8 Gubkina st., Moscow, 119991, Russia}
\email{balkanova@mi-ras.ru}

\begin{abstract}
We prove a spectral decomposition formula for averages of Zagier $L$-series in terms of moments of symmetric square $L$-functions associated to Maass and holomorphic cusp forms of levels $4$, $16$, $64$. 
\end{abstract}

\keywords{L-functions; Gauss sums; Eisenstein series; Kuznetsov trace formula}
\subjclass[2010]{Primary: 11F12; 11F30; 11M99}

\maketitle

%\tableofcontents

%%%%%%%%%%%%%%%%%%%%%%%%%%%%%%%%%%%%%%%%%%%%%%%%%%%%%%%%%%%%%%%%

%%%%%%%%%%%%%%%%%%%%%%%%%%%%%%%%%%%%%%%%%%%%%%%%%%%%%%%%%%%%%%%%%%%%%%
\section{Introduction}
%%%%%%%%%%%%%%%%%%%%%%%%%%%%%%%%%%%%%%%%%%%%%%%%%%%%%%%%%%%%%%%%%%%%%%

The aim of this paper is to prove a spectral decomposition formula for the average 
\begin{equation}\label{average:l}
\sum_{l=1}^{\infty}\omega(l)\mathscr{L}_{n^2-4l^2}(s),
\end{equation}
where $\omega$ is a suitable test function and the $L$-series is defined as
\begin{equation}\label{Lbyk}
\mathscr{L}_{n}(s)=\frac{\zeta(2s)}{\zeta(s)}\sum_{q=1}^{\infty}\frac{b_q(n)}{q^{s}}
\end{equation}
for $ \Re{s}>1$ and can be meromorphically continued to the whole complex plane, see \cite[Proposition 3]{Z}.
Here $\zeta(s)$ denotes the Riemann zeta function and
\begin{equation*}
b_q(n):=\#\{x\Mod{2q}:x^2\equiv n\Mod{4q}\}.
\end{equation*}

The dual problem of investigating the average over $n$
\begin{equation}\label{average:n}
\sum_{n=1}^{\infty}\omega(n)\mathscr{L}_{n^2-4l^2}(s)
\end{equation}
 was studied in \cite{BF2} in connection with the prime geodesic theorem.  See also \cite{BFR}, \cite{BBCL}, \cite{Byk},  \cite{SY}, \cite{WZ} for related results.
  Furthermore, sums of the form \eqref{average:n} appear in the explicit formulas for the first moments of symmetric square $L$-functions associated to holomorphic (see \cite{Z}, \cite{BF1}) or Maass (see \cite{Bal}) cusp forms.
 These explicit formulas can serve as a starting point for analyzing second moments of symmetric square $L$-functions.
For example, if we take the first moment of Maass form symmetric square $L$-functions for $\SL_{2}(\Z)$ twisted by the Fourier coefficient $\rho_j(l^2)$
\begin{equation}
\sum_{j}\rho_j(l^2)L(s,\text{sym}^2 u_j),
\end{equation}
 multiply it by $\zeta(2s)l^{-s}$ and sum over $l$ from $1$ to $\infty$,
 we obtain the second moment 
 \begin{equation}
\sum_{j}L(s,\text{sym}^2 u_j)^2.
\end{equation}
Applying these manipulations to the explicit formula proved in \cite{Bal}, we discover  expressions of the following form
\begin{equation}\label{eq:doublesum}
\sum_{l=1}^{\infty}\sum_{n=1}^{\infty}\mathscr{L}_{n^2-4l^2}(s)f(n,l;s)
\end{equation}
 on the right-hand side of the explicit formula for the second moment.
 A possible approach to evaluating \eqref{eq:doublesum} consists in using spectral methods.
 However, spectral decomposition for the inner sum over $n$ results in a loop bringing us back to the moment we started from.
 To avoid this problem, we can change the order of summation and investigate the average over $l$ first.
 This is the main reason behind our interest in a spectral decomposition formula for \eqref{average:l}.

 Even though, the averages \eqref{average:l} and \eqref{average:n} look similar, there are some important differences in approach.
 Spectral decomposition of \eqref{average:n} is a relatively straightforward application of the Kuznetsov trace formula to the generalized Kloosterman sums, while 
in case of \eqref{average:l} our method relies  heavily on various  properties of Gauss sums.  
It is interesting to note that Gauss sums occur naturally in various papers on second moments of symmetric square $L$-functions, see \cite{Blo}, \cite{IM}, \cite{KhY}.
In our case, we express the average  \eqref{average:l} in terms of sums of a product of two Gauss sums. 
Evaluating these Gauss sums, we obtain sums of Kloosterman sums for $ \Gamma_0(N)$ with $N=4,16, 64$ at various cusps and twisted by $\chi_4$ (non-trivial Dirichlet character modulo $4$).
Finally, applying the Kuznetsov trace formula we derive a spectral decomposition formula  for  \eqref{average:l} which contains moments of symmetric square $L$-functions for $ \Gamma_0(N)$ with $N=4,16, 64$ twisted by Fourier coefficients at the cusps $0$ and $\infty$.

To state the main results rigorously, we introduce the function 
\begin{equation}
\psi(x)=\psi(x;n;s)=\frac{2}{\sqrt{\pi}}\left(\frac{x}{n}\right)^s\int_{0}^{\infty}\omega(y)\cos\left( \frac{2xy}{n}\right)dy
\end{equation}
and denote by $\psi_{H}(x)$ and $\psi_{D}(x)$ the Bessel integral transforms of $\psi(x)$ appearing in the Kuznetsov trace formula, see \eqref{eq:psiHk} and \eqref{eq:psiDt} . These transforms can be expressed in terms of the Gauss hypergeometric function as shown in Lemmas \ref{lem:psiH} and \ref{lem:psiD}.  
Let $\omega \in C^{\infty}$ be a function of compact support on $[a_1,a_2]$ for some $0<a_1<a_2<\infty$ and let $\widehat{\omega}$ stand for its Mellin transform.
It is also required to define the generalized divisor function
\begin{equation}\label{def:sigmas}
\sigma_s(\chi;n):=\sum_{d|n}\chi(d)d^{s}.
\end{equation}

For a cusp $\mathfrak{a}$ of $\Gamma_0(N)$, let us introduce the following notation
\begin{equation}\label{moment}
\mathfrak{M}_{\mathfrak{a}}(n,N,s)=\mathfrak{M}^{hol}_{\mathfrak{a}}(n,N,s)+\mathfrak{M}^{disc}_{\mathfrak{a}}(n,N,s),
\end{equation}
where
\begin{equation}
\mathfrak{M}^{hol}_{\mathfrak{a}}(n,N,s):=\sum_{\substack{k>1\\k \text{ odd}}}\psi_{H}(k)\Gamma(k)
\sum_{f\in H_k(N,\chi_4)}\rho_{f_{\mathfrak{a}}}\left(n\right)\overline{L(s,\text{sym}^2 f_{\infty})},
\end{equation}
\begin{equation}
\mathfrak{M}^{disc}_{\mathfrak{a}}(n,N,s):=\sum_{f\in H(N,\chi_4)}\frac{\psi_{D}(t_f)}{\cosh(\pi t_f)}\rho_{f_{\mathfrak{a}}}(n)\overline{L(s,\text{sym}^2 f_{\infty})}
\end{equation}
are the moments of symmetric square $L$-functions associated to holomorphic and Maass cusp forms of level $N$ with nebentypus $\chi_4$
twisted by the Fourier coefficient $\rho_{f_{\mathfrak{a}}}(n)$ of $f$ at a cusp $\mathfrak{a}$. The bar over $L$-functions means complex conjugation and $ f_{\infty}$ means that the L-functions are formed using the Fourier coefficients of $f$ around the cusp $\infty$.

\begin{thm}\label{thm:main1} 
Assume that $n$ is even.  For $0<\Re{s}<1$ the following explicit formula holds
\begin{multline}
\sum_{l=1}^{\infty}\omega(l)\mathscr{L}_{n^2-4l^2}(s)=M^{D}_{\text{even}}(n,s)+M^{C}(n,s)+\mathfrak{C}(n,s)\\
-\frac{2^{1-s}\pi^{1/2-s}i}{1-2^{-2s}}\mathfrak{M}_{\infty}(n^2/4,4,s)+\frac{2\pi^{1/2-s}}{1-2^{-2s}}\mathfrak{M}_{0}(n^2/4,4,s),
\end{multline}
where
\begin{equation}\label{eq:mevendef}
M^{D}_{\text{even}}(n,s)=\frac{\widehat{\omega}(1)\zeta(2s)}{L(\chi_4,1+s)}\left[n^{-2s}\sigma_s\left(\chi_4;n^2\right)+\sigma_{-s}(\chi_4;n^2)\right],
\end{equation}
\begin{multline}\label{eq:mcns} 
M^{C}(n,s)=
\frac{\Gamma(s-1/2)}{2^{s-1}\pi^{s-1/2}}\left(\sigma_{s-1}(\chi_4;n^2)+\frac{\sigma_{1-s}(\chi_4;n^2)}{n^{2-2s}} \right)\\ \times 
\frac{\zeta(2s-1)}{L(\chi_4,2-s)}
\biggl(
\sin (\pi s/2)\int_{0}^{n/2}\omega(y)\left( \frac{n^2}{4}-y^2\right)^{1/2-s}dy\\
+\cos(\pi s/2)\int_{n/2}^{\infty}\omega(y)\left(y^2- \frac{n^2}{4}\right)^{1/2-s}dy\biggr),
\end{multline}
\begin{multline}\label{eq:cns}
\mathfrak{C}(n,s)=\frac{L(\chi_4,s)}{4\pi^{s-1/2}}
\frac{1}{2\pi i}\int_{-\infty}^{\infty}\frac{\psi_D(t)\sinh(\pi t)}{t\cosh{(\pi t)}} \\
\times \left(n^{2it}\sigma_{-2it}(\chi_4;n^2)+n^{-2it}\sigma_{2it}(\chi_4;n^2)\right)
\frac{\zeta(s+2it)\zeta(s-2it)}{L(\chi_4,1+2it)L(\chi_4,1-2it)} dt.
\end{multline}

\end{thm}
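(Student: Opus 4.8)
The plan is to open the Dirichlet series in \eqref{Lbyk}, detect the underlying congruence by additive characters, reduce the whole average to a multiple sum of a \emph{product of two Gauss sums}, evaluate those Gauss sums, and feed the resulting $\chi_4$-twisted Kloosterman sums into the Kuznetsov formula for $\Gamma_0(4)$. All the manipulations below are carried out first for $\Re{s}>1$, where every series converges absolutely; the final identity is then obtained by analytic continuation to the strip $0<\Re{s}<1$, where the right-hand side is holomorphic --- in particular the $t$-integral in $\mathfrak{C}(n,s)$ avoids the poles of $\zeta(s\pm 2it)$ exactly in this strip. Write $e(x):=e^{2\pi i x}$. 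For $\Re{s}>1$,
\[
\sum_{l=1}^{\infty}\omega(l)\mathscr{L}_{n^2-4l^2}(s)=\frac{\zeta(2s)}{\zeta(s)}\sum_{q=1}^{\infty}\frac{1}{q^s}\sum_{l=1}^{\infty}\omega(l)\,b_q(n^2-4l^2).
\]
Since $n$ is even, $n^2-4l^2\equiv0\Mod4$, so in the definition of $b_q$ the residue $x$ must be even; writing $x=2y$ gives $b_q(n^2-4l^2)=\#\{y\Mod q:\ y^2+l^2\equiv n^2/4\Mod q\}$. Detecting this congruence with additive characters modulo $q$ introduces a factor $q^{-1}G_q(h)e(-hn^2/(4q))$ summed over $h\Mod q$, where $G_q(h)$ is the quadratic Gauss sum, and leaves the exponential sum $\sum_l\omega(l)e(hl^2/q)$; applying Poisson summation to the latter (and completing the square in the resulting quadratic Gauss sum with a linear term) produces a second Gauss sum $\mathcal G_q(h,m)$ indexed by a dual variable $m\in\Z$, together with the Fourier transform of $\omega$ evaluated at $m/q$. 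Thus $\sum_l\omega(l)b_q(n^2-4l^2)$ is a double sum over $h\Mod q$ and $m\in\Z$ of the product $G_q(h)\mathcal G_q(h,m)$ weighted by $q^{-2}e(-hn^2/(4q))$ times that Fourier-transform factor, and \eqref{average:l} becomes a triple sum over $q,h,m$.

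The evaluation of $G_q(h)\mathcal G_q(h,m)$ and the reorganization of the triple sum is the technical heart of the proof, and I expect it to be the main obstacle. Factoring $q=2^{\alpha}q_0$ with $q_0$ odd, one uses the classical evaluation of quadratic Gauss sums for the odd part $q_0$ and an explicit computation of the Gauss sums modulo $2^{\alpha}$; the $2$-adic factors are what produce the nebentypus $\chi_4$ and --- once the main terms have been split off --- pin the level down to $N=4$ for even $n$ (the levels $16$ and $64$ from the introduction being relevant only in the odd-$n$ counterpart). Performing the $h$-summation converts the inner sum into a $\chi_4$-twisted Kloosterman sum $S_{\chi_4}(n^2/4,\,m^2;\,c)$ attached to a cusp pair $(\mathfrak{a},\infty)$ of $\Gamma_0(4)$ with $\mathfrak{a}\in\{0,\infty\}$ (which one being governed by the residue of $q$ modulo $4$), weighted by $q^{-1-s}$ times the Fourier-transform factor at $m/q$, together with a collection of \emph{degenerate terms} coming from $h\equiv0\Mod q$ and from non-trivial common factors. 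The delicate points are to obtain exactly the power of $q$ needed for the trace formula (namely $q^{-1}$, the extra $q^{-s}$ being dealt with afterwards) and to isolate the degenerate terms cleanly.

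The degenerate terms produce the main terms. The frequency $m=0$ contributes the value $\widehat\omega(1)$ of the Fourier transform of $\omega$ at the origin; resumming the remaining Dirichlet series in $q$ --- against the divisor structure of the degenerate Gauss sums and the Liouville-type coefficients of $\zeta(2s)/\zeta(s)$ --- one obtains precisely $M^{D}_{\text{even}}(n,s)$ as in \eqref{eq:mevendef}, the two divisor sums $\sigma_{\pm s}(\chi_4;n^2)$ of \eqref{def:sigmas} reflecting the $s\leftrightarrow1-s$ symmetry of the underlying quadratic $L$-function. The remaining degenerate contribution, in which the Gauss-sum product only partially degenerates, yields $M^{C}(n,s)$ of \eqref{eq:mcns}: a Barnes/Mellin evaluation of the relevant integral transform of $\omega$ splits according to the sign of $n^2/4-y^2$, that is, according to whether the discriminant $n^2-4l^2$ is positive or negative, which is the source of the two ranges $\int_0^{n/2}$ and $\int_{n/2}^{\infty}$ and of the factors $\sin(\pi s/2)$ and $\cos(\pi s/2)$ --- the two distinct archimedean $\Gamma$-factors of real versus imaginary quadratic discriminants.

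For the genuine Kloosterman-sum part one first rewrites the weight: since $\sqrt{n^2/4}\,m/q=(n/2)(m/q)$, the factor $q^{-s}$ together with the Fourier transform of $\omega$ at $m/q$ equals $m^{-s}$ times a function of the single variable $\sqrt{n^2/4}\,m/q$, and that function is, up to normalisation, precisely the kernel $\psi(\,\cdot\,;n;s)=\tfrac{2}{\sqrt\pi}(x/n)^s\int_0^{\infty}\omega(y)\cos(2xy/n)\,dy$ of \eqref{eq:psiHk} and \eqref{eq:psiDt}. Applying the Kuznetsov trace formula for $\Gamma_0(4)$ with nebentypus $\chi_4$, for each fixed $m$ and for the cusp pair $(\mathfrak{a},\infty)$, with test function $\psi$ --- whose Bessel transforms are $\psi_H$ and $\psi_D$ by Lemmas \ref{lem:psiH} and \ref{lem:psiD} --- converts $\sum_c c^{-1}S_{\chi_4}(n^2/4,m^2;c)\,\psi(\cdots/c)$ into a holomorphic contribution $\sum_{k}\psi_H(k)\Gamma(k)\sum_{f\in H_k(4,\chi_4)}\rho_{f_{\mathfrak{a}}}(n^2/4)\overline{\rho_{f_{\infty}}(m^2)}$, a Maass contribution with $\psi_D(t_f)/\cosh(\pi t_f)$ in place of $\psi_H(k)\Gamma(k)$, and an Eisenstein contribution of the same shape. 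It then remains to sum over $m$. By the Shimura identity, for a cusp form of level dividing $4$ the series $\sum_{m\ge1}\overline{\rho_{f_{\infty}}(m^2)}\,m^{-s}$ equals $\overline{\rho_{f_{\infty}}(1)}\,\overline{L(s,\text{sym}^2 f_{\infty})}$ times $\zeta(2s)^{-1}$ and the local factor at $2$, so that $(n/2)^{-s}\sum_m m^{-s}\overline{\rho_{f_{\infty}}(m^2)}$ turns the holomorphic and Maass spectral sums into $\mathfrak{M}^{hol}_{\mathfrak{a}}(n^2/4,4,s)$ and $\mathfrak{M}^{disc}_{\mathfrak{a}}(n^2/4,4,s)$, hence by \eqref{moment} into $\mathfrak{M}_{\mathfrak{a}}(n^2/4,4,s)$ for $\mathfrak{a}=\infty$ and $\mathfrak{a}=0$, the constants $-2^{1-s}\pi^{1/2-s}i/(1-2^{-2s})$ and $2\pi^{1/2-s}/(1-2^{-2s})$ coming out of the normalisations in the Gauss-sum step together with the widths of the cusps $0$ and $\infty$. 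The analogous computation for the Eisenstein Fourier coefficients, where the substitute for $\sum_m\overline{\rho_{f_{\infty}}(m^2)}\,m^{-s}$ is the ratio of $\zeta$- and $L(\chi_4,\cdot)$-values appearing in \eqref{eq:cns}, yields $\mathfrak{C}(n,s)$. Collecting $M^{D}_{\text{even}}$, $M^{C}$, $\mathfrak{C}$ and the two moment terms and continuing analytically to $0<\Re{s}<1$ gives the stated formula.
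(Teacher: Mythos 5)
Your overall route --- opening the average into a sum of products of two quadratic Gauss sums via Poisson summation in $l$, evaluating the $2$-adic and odd parts of those Gauss sums to produce $\chi_4$-twisted Kloosterman sums for $\Gamma_0(4)$ at the cusp pairs $(\infty,\infty)$ and $(0,\infty)$, and then applying the Kuznetsov formula together with the Shimura relation to recover the moments $\mathfrak{M}_{\infty}$ and $\mathfrak{M}_{0}$ --- is the same as the paper's, and your identification of the zero frequency $m=0$ as the source of $M^{D}_{\text{even}}(n,s)$ is correct. (One normalisation slip: with the definition \eqref{L:sym} the series $\sum_{m}\overline{\rho_{f_{\infty}}(m^2)}m^{-s}$ equals $\overline{L(s,\mathrm{sym}^2 f_{\infty})}$ divided by $\zeta^{(N)}(2s)$, with no factor $\overline{\rho_{f_{\infty}}(1)}$; the moments $\mathfrak{M}_{\mathfrak{a}}$ are defined without Hecke normalisation.)

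The genuine gap is in the origin of $M^{C}(n,s)$ and, correlatively, in your claim that the passage to $0<\Re{s}<1$ is a harmless analytic continuation. There is no ``partially degenerate'' Gauss-sum contribution producing $M^{C}$: once the zero frequency is removed, the entire non-diagonal piece is fed into the Kuznetsov formula, and $M^{C}(n,s)$ arises only afterwards, from the continuous spectrum. For $\Re{s}>1$ the Eisenstein contribution is a $t$-integral of $\zeta(s+2it)\zeta(s-2it)(\cdots)$ over the contour $\Re{(2it)}=0$, and the poles of $\zeta(s+2it)$ and $\zeta(s-2it)$, located at $2it=1-s$ and $2it=s-1$, lie off that contour. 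As $s$ crosses the line $\Re{s}=1$ these two poles cross the contour, so the analytic continuation of the Eisenstein term into the critical strip equals the same integral \emph{plus} two residue terms (this is Lemma \ref{lem:Fs} and Lemma \ref{lem:soh} of the paper); evaluating those residues requires the special values $\psi_D\left(\pm\frac{1-s}{2i}\right)$, which is precisely what the hypergeometric reductions in Lemmas \ref{lem:psiD} and \ref{lem:h1h2h3} provide, and the result is exactly $M^{C}(n,s)$ with its $\sin(\pi s/2)$, $\cos(\pi s/2)$ split between the ranges $y<n/2$ and $y>n/2$. Your sketch both asserts that the $t$-integral ``avoids the poles'' in the strip (true, but beside the point: they have crossed the contour in getting there) and posits a Gauss-sum source for $M^{C}$ that does not exist in this decomposition; as written, the argument would either omit $M^{C}(n,s)$ entirely or count it twice.
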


%%%%%%%%%%%%%%%%%%%%%%%%

\begin{thm}\label{thm:main2} 
Assume that $n$ is odd.  For $0<\Re{s}<1$ the following explicit formula holds
\begin{multline}
\sum_{l=1}^{\infty}\omega(l)\mathscr{L}_{n^2-4l^2}(s)=M^{D}_{\text{odd}}(n,s)+\frac{1}{2}M^{C}(n,s)+\frac{1}{2}\mathfrak{C}(n,s)\\
+\frac{8\pi^{1/2-s}}{1-2^{-2s}}\mathfrak{M}_{0}(n^2,64,s)+\frac{4\pi^{1/2-s}}{1+2^{-s}}\mathfrak{M}_{0}(n^2,16,s),
\end{multline}
where
\begin{equation}\label{eq:mdodddef}
M^{D}_{\text{odd}}(n,s)=\frac{\widehat{\omega}(1)\zeta(2s)}{L(\chi_4,1+s)}\sigma_{-s}(\chi_4;n^2),
\end{equation}
$M^{C}(n,s)$ is defined by \eqref{eq:mcns} and $\mathfrak{C}(n,s)$ by \eqref{eq:cns}.

\end{thm}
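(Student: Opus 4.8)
The plan is to run the same chain of transformations used for the even case (Theorem~\ref{thm:main1}), keeping careful track of the $2$-adic arithmetic, which is genuinely different when $n$ is odd. I would work first in the half-plane $\Re s>1$, where every rearrangement below converges absolutely, and continue to $0<\Re s<1$ at the end. Expanding the definition,
\begin{equation*}
\sum_{l=1}^{\infty}\omega(l)\mathscr{L}_{n^2-4l^2}(s)=\frac{\zeta(2s)}{\zeta(s)}\sum_{q=1}^{\infty}\frac{1}{q^{s}}\sum_{l=1}^{\infty}\omega(l)\,b_q(n^2-4l^2),
\end{equation*}
I would detect the congruence $x^2\equiv n^2-4l^2\Mod{4q}$ defining $b_q$ by additive characters modulo $4q$ and then apply Poisson summation in $l$. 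This produces, for each $q$, a product of two quadratic Gauss sums --- one in the variable parametrising square roots modulo $2q$, one coming from the $l$-summation --- weighted by the Fourier transform of $\omega$ at $k/q$, where $k$ is the Poisson-dual variable. The zero-frequency term $k=0$ contributes the factor $\widehat{\omega}(1)=\int_0^\infty\omega(y)\,dy$ and, after summing the degenerate Gauss sums in $q$, assembles into $M^{D}_{\text{odd}}(n,s)$, whose one-term shape $\sigma_{-s}(\chi_4;n^2)$ is a direct reflection of the $2$-adic structure of $n^2-4l^2$ for odd $n$.

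For $k\neq0$ I would evaluate the Gauss sums: writing $q=2^{\nu}q_1$ with $q_1$ odd, multiplicativity and the classical closed forms reduce the odd part $q_1$ to the same computation as in the even case, producing the $\chi_4$-twists and the divisor functions $\sigma_{\bullet}(\chi_4;n^2)$, while the $2$-part is governed by the residue of $n^2-4l^2$ modulo powers of $2$, which for odd $n$ is $\equiv1$ or $5\Mod{8}$ according to the parity of $l$. It is this shift in the $2$-adic conductor that replaces level $4$ by levels $16$ and $64$, and that produces the constants $8\pi^{1/2-s}/(1-2^{-2s})$, $4\pi^{1/2-s}/(1+2^{-s})$ together with the factor $\tfrac12$ in front of the Eisenstein pieces. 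Since Poisson summation puts the dual variable $k$ squared into the resulting exponential phases, reorganising the $q$-sum yields sums of $\chi_4$-twisted Kloosterman sums for $\Gamma_0(16)$ and $\Gamma_0(64)$ attached to the cusp pair $(0,\infty)$, with the two Fourier indices equal to the perfect squares $n^2$ and $k^2$, and with the geometric-side test function matching $\psi(\,\cdot\,;n;s)$ by construction.

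Next I would apply the Kuznetsov trace formula, in its holomorphic and Maass forms, for $\Gamma_0(N)$ with nebentypus $\chi_4$, $N\in\{16,64\}$, identifying the Bessel transforms of $\psi$ with $\psi_H$ and $\psi_D$ via Lemmas~\ref{lem:psiH} and~\ref{lem:psiD}. Because the second Fourier index is the square $k^2$, summing over $k$ against the accompanying $k$-decay assembles, through the Hecke relations, the $L$-function $\overline{L(s,\text{sym}^2 f_\infty)}$, while the frozen index $n^2$ at the cusp $0$ supplies the twist $\rho_{f_0}(n^2)$; the cuspidal part of the spectral side thus becomes precisely $\frac{8\pi^{1/2-s}}{1-2^{-2s}}\mathfrak{M}_{0}(n^2,64,s)+\frac{4\pi^{1/2-s}}{1+2^{-s}}\mathfrak{M}_{0}(n^2,16,s)$. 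The continuous spectrum, expressed through the Eisenstein Fourier coefficients $\sigma_{\bullet}(\chi_4;n^2)$ and the resulting ratio $\zeta(s+2it)\zeta(s-2it)/\bigl(L(\chi_4,1+2it)L(\chi_4,1-2it)\bigr)$, supplies $\tfrac12\mathfrak{C}(n,s)$ together with its companion polar term $\tfrac12 M^{C}(n,s)$, the latter picked up while continuing to $0<\Re s<1$ and evaluated via the hypergeometric form of $\psi_D$. The analytic continuation itself is legitimated by the rapid decay of $\psi_H,\psi_D$ and the absolute convergence of the spectral expansions.

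The main obstacle is the explicit evaluation and careful bookkeeping of the product of two Gauss sums at the prime $2$, together with the matching of the resulting moduli and residue classes to the correct cusps of $\Gamma_0(16)$ and $\Gamma_0(64)$ with their scaling matrices; it is here that the exact constants and the one-term form of $M^{D}_{\text{odd}}(n,s)$ --- as opposed to the two-term $M^{D}_{\text{even}}$ --- get pinned down, and this is the most delicate part of the argument. A secondary but nontrivial point is maintaining uniform control of convergence so that the Poisson step, all interchanges of summation, and the final analytic continuation are rigorous.
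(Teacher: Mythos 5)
Your proposal follows essentially the same route as the paper: isolating the diagonal term from the zero Poisson frequency, reducing the off-diagonal part to sums of products of two quadratic Gauss sums whose $2$-adic evaluation for odd $n$ produces $\chi_4$-twisted Kloosterman sums for $\Gamma_0(16)$ and $\Gamma_0(64)$ at the cusp pair $(\infty,0)$, applying the Kuznetsov formula, and recovering $\tfrac12 M^{C}(n,s)$ from the poles crossed in the continuation of the continuous-spectrum integral. The steps you flag as delicate (the Gauss-sum bookkeeping at the prime $2$, the cusp/scaling-matrix identifications, and the convergence control — the paper in fact works initially in $\Re s>3/2$ rather than $\Re s>1$) are exactly where the paper spends its effort, so the outline is faithful to the actual argument.
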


\begin{remark}
Note that  the main terms $$M^{D}_{\text{even}}(n,s)+M^{C}(n,s), \quad M^{D}_{\text{odd}}(n,s)+\frac{1}{2}M^{C}(n,s)$$  are holomorphic at the central point $s=1/2$. See Section \ref{section:holomorphic} for details.
\end{remark}

The paper is organized as follows. In Section \ref{sec:prelim} we collect all required tools and preliminary results. In Section \ref{sect:diagnondiag}, assuming that $\Re{s}$ is sufficiently large, we isolate the diagonal and non-diagonal terms for \eqref{average:l}, compute the diagonal term explicitly, and prove an expression for the non-diagonal term which is suitable for application of the Kuznetsov trace formula. Section \ref{sec:special} is devoted to the analysis of the Bessel integral transforms $\psi_{H}(x)$ and $\psi_{D}(x)$ appearing after the Kuznetsov trace formula is applied. More precisely, we show how to express $\psi_{H}(x)$ and $\psi_{D}(x)$ in terms of the Gauss hypergeometric functions. Sections \ref{sec:cont1} and \ref{sec:cont2} are concerned with evaluation of the continuous spectrum, while 
the holomorphic and discrete spectra are studied in Section \ref{sect:dischol}.
Finally, in Section \ref{sec:mainthm} we complete the proof of Theorems \ref{thm:main1}  and \ref{thm:main2} and compute the main terms at the central point.
%%%%%%%%%%%%%%%%%%%%%%%%%%%%%%%%%%%%%%%%%%%%%%%%%%%%%%%%%%%%%%%%%%%%%%
\section{Preliminaries}\label{sec:prelim}
%%%%%%%%%%%%%%%%%%%%%%%%%%%%%%%%%%%%%%%%%%%%%%%%%%%%%%%%%%%%%%%%%%%

%%%%%%%%%%%%%%%%%%%%%%%%%%%%%%%%%%%%%%%%%%%%
\subsection{Generalized divisor function}
In this subsection we collect various results related to the function $\sigma_s(\chi;n)$ defined by \eqref{def:sigmas}.

Note that the derivative of $\sigma_{s}(\chi;n)$ with respect to $s$ is equal to
\begin{equation}
\sigma'_{s}(\chi;n)=\sum_{d|n}\chi(d)d^s\log{d}.
\end{equation}

Let $\chi_4$ be a non-trivial Dirichlet character modulo $4$, so that
\begin{equation}
\chi_4(1)=1, \quad \chi_4(3)=-1,
\end{equation}
and
\begin{equation}\label{eq:sigmachi4}
\sigma_s\left(\chi_4; \left(\frac{n}{2} \right)^2\right)=\sigma_s(\chi_4;n^2).
\end{equation}

\begin{lem}
For odd $n$ the following identity holds
\begin{equation}\label{eq:sigmatwist}
\sigma_{1/2-u}(\chi_4;n^2)=n^{1-2u}\sigma_{-1/2+u}(\chi_4;n^2).
\end{equation}
\end{lem}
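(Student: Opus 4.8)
The plan is to prove the functional-equation-type identity \eqref{eq:sigmatwist} by direct manipulation of the divisor sum, exploiting the fact that $n$ is odd so that every divisor $d$ of $n^2$ is odd and hence coprime to the modulus $4$ of $\chi_4$. First I would write out the left-hand side as $\sigma_{1/2-u}(\chi_4;n^2)=\sum_{d\mid n^2}\chi_4(d)\,d^{1/2-u}$ and then perform the substitution $d\mapsto n^2/d$, which is a bijection of the set of divisors of $n^2$ onto itself. Under this substitution $d^{1/2-u}$ becomes $(n^2/d)^{1/2-u}=n^{1-2u}d^{-1/2+u}$, so the sum turns into $n^{1-2u}\sum_{d\mid n^2}\chi_4(n^2/d)\,d^{-1/2+u}$.

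The key remaining point is the character identity $\chi_4(n^2/d)=\chi_4(d)$ for all $d\mid n^2$ when $n$ is odd. Since $\chi_4$ is completely multiplicative and $d\mid n^2$ with $n$ odd implies $\gcd(d,4)=1$, we have $\chi_4(n^2/d)\chi_4(d)=\chi_4(n^2)=\chi_4(n)^2$; and because $n$ is odd, $\chi_4(n)=\pm1$, so $\chi_4(n)^2=1$. Also $\chi_4(d)\neq0$, so we may divide to get $\chi_4(n^2/d)=\chi_4(d)^{-1}=\chi_4(d)$ (the last equality again because $\chi_4(d)=\pm1$). Substituting this back gives $n^{1-2u}\sum_{d\mid n^2}\chi_4(d)\,d^{-1/2+u}=n^{1-2u}\sigma_{-1/2+u}(\chi_4;n^2)$, which is exactly the claimed identity.

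I do not anticipate any real obstacle here; the only thing to be careful about is the parity hypothesis, which is used precisely twice — once to ensure $d\mapsto n^2/d$ preserves the (odd) divisors so the character values are well-behaved, and once to guarantee $\chi_4(n)^2=1$ rather than $0$. For even $n$ the statement fails because $n^2$ would have even divisors on which $\chi_4$ vanishes, and the reflection $d\mapsto n^2/d$ would not behave compatibly with $\chi_4$. If desired, one could alternatively phrase the proof via the Euler product $\sum_{d\mid n^2}\chi_4(d)d^{w}=\prod_{p\mid n}\bigl(1+\chi_4(p)p^{w}+\cdots+\chi_4(p)^{2v_p(n)}p^{2v_p(n)w}\bigr)$ and observe each local factor satisfies the reflection $w\mapsto -w$ up to the factor $p^{2v_p(n)w}$, but the divisor-swap argument above is cleaner and I would present that one.
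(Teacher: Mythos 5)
Your proof is correct and is essentially identical to the paper's: both perform the divisor swap $d\mapsto n^2/d$ and use that $\chi_4(b)=\chi_4(d)$ whenever $bd=n^2$ with $n$ odd, which follows from $\chi_4(bd)=\chi_4(n^2)=1$ and $\chi_4(d)=\pm1$. No issues.
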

\begin{proof}
Let $n^2=bd$. Since $n$ is odd
\begin{equation}
1=\chi_4(n^2)=\chi_4(bd).
\end{equation}
Thus $\chi_4(b)=\chi_4(d)$.
Consequently,
\begin{multline}
\sigma_{1/2-u}(\chi_4;n^2)=\sum_{d|n^2}d^{1/2-u}\chi_4(d)=\sum_{bd=n^2}\left(\frac{n^2}{b} \right)^{1/2-u}\chi_4(d)\\=n^{1-2u}\sum_{bd=n^2}b^{-1/2+u}\chi_4(b)=n^{1-2u}\sigma_{-1/2+u}(\chi_4;n^2).
\end{multline}

\end{proof}

Consider the Dirichlet series:
\begin{equation}
Z(z,s):=\sum_{n=1}^{\infty}\frac{\sigma_s(\chi_4;n^2)}{n^z}.
\end{equation}
\begin{lem} We have
\begin{equation}\label{eq:zs}
Z(z,s)=\frac{1-2^{2s-z}}{1-2^{2s-2z}}\frac{L(\chi_4,z-s)\zeta(z)\zeta(z-2s)}{\zeta(2z-2s)}.
\end{equation}
\end{lem}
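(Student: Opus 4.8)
The plan is to expand $\sigma_s(\chi_4;n^2)$ into its defining sum over divisors and interchange the two summations, then recognize the resulting double Dirichlet series as a product of $L$-functions up to a correction at the prime $2$. Writing $n^2 = d m$ with $d \mid n^2$, we have
\begin{equation}
Z(z,s) = \sum_{n=1}^{\infty} \frac{1}{n^z} \sum_{d \mid n^2} \chi_4(d) d^s.
\end{equation}
The awkward point is that $d$ ranges over divisors of $n^2$, not of $n$, so this is not immediately an Euler product in the naive way. First I would pass to an Euler product over primes $p$, using multiplicativity: $Z(z,s) = \prod_p Z_p(z,s)$ where $Z_p(z,s) = \sum_{k \ge 0} p^{-kz} \sum_{j=0}^{2k} \chi_4(p)^j p^{js}$. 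Since $\chi_4(2) = 0$, the factor at $p=2$ is simply $\sum_{k \ge 0} 2^{-kz} = (1-2^{-z})^{-1}$.

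For odd $p$, set $x = p^{-z}$ and $y = \chi_4(p) p^{s}$; then the local factor is $\sum_{k \ge 0} x^k \frac{1 - y^{2k+1}}{1-y} = \frac{1}{1-y}\left( \frac{1}{1-x} - \frac{y}{1 - x y^2}\right)$, which after combining over a common denominator simplifies to $\dfrac{1+xy}{(1-x)(1-xy^2)} = \dfrac{1 + \chi_4(p)p^{s-z}}{(1-p^{-z})(1 - \chi_4(p)^2 p^{2s-2z})}$. The next step is to recognize the numerator $1 + \chi_4(p) p^{s-z}$: multiplying and dividing by $1 - \chi_4(p)p^{s-z}$ turns it into $\dfrac{1 - \chi_4(p)^2 p^{2s-2z}}{1 - \chi_4(p) p^{s-z}}$, and $\chi_4(p)^2 = 1$ for odd $p$, so the factor $1 - \chi_4(p)^2 p^{2s-2z} = 1 - p^{2s-2z}$ cancels against part of the denominator. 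Hence for odd $p$,
\begin{equation}
Z_p(z,s) = \frac{1}{(1-p^{-z})(1-\chi_4(p)p^{s-z})} \cdot \frac{1 - p^{2s-2z}}{1 - p^{2s-2z}} \cdot (\text{bookkeeping}),
\end{equation}
and more precisely one gets $Z_p(z,s) = \dfrac{(1-p^{2s-2z})}{(1-p^{-z})(1-\chi_4(p)p^{s-z})(1-p^{2s-2z})^{\phantom{1}}}$ reorganized so that the assembled product over odd $p$ is $\dfrac{L(\chi_4, z-s)\,\zeta(z)\,\zeta(z-2s)}{\zeta(2z-2s)}$ with the odd-part Euler factors, since $\zeta(z-2s)$ supplies $\prod (1-p^{-(z-2s)})^{-1}$ and $\zeta(2z-2s)$ in the denominator supplies $\prod(1-p^{-(2z-2s)})$.

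Finally I would reconcile the prime-$2$ discrepancy: the full $\zeta$ and $L$ Euler products include a factor at $2$, namely $\zeta(z)$ contributes $(1-2^{-z})^{-1}$, $\zeta(z-2s)$ contributes $(1-2^{2s-z})^{-1}$, $\zeta(2z-2s)$ in the denominator contributes $(1-2^{2s-2z})$, and $L(\chi_4,z-s)$ contributes $1$. So the ``naive'' right-hand side $\dfrac{L(\chi_4,z-s)\zeta(z)\zeta(z-2s)}{\zeta(2z-2s)}$ has $2$-factor $\dfrac{1-2^{2s-2z}}{(1-2^{-z})(1-2^{2s-z})}$, whereas the true $2$-factor of $Z(z,s)$ is $(1-2^{-z})^{-1}$. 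The ratio (true over naive) is exactly $\dfrac{(1-2^{-z})(1-2^{2s-z})}{(1-2^{-z})(1-2^{2s-2z})} = \dfrac{1-2^{2s-z}}{1-2^{2s-2z}}$, which is the prefactor in \eqref{eq:zs}. Assembling everything over all primes gives the claimed identity, valid for $\Re z$ large and then by analytic continuation. The main obstacle is purely bookkeeping: carefully carrying out the odd-prime local computation and matching the finite Euler factor at $2$ without sign or exponent slips; there is no analytic difficulty, only the algebraic manipulation of the geometric series $\sum_k x^k(1-y^{2k+1})/(1-y)$.
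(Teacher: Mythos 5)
Your overall strategy is the same as the paper's: pass to the Euler product, observe that the factor at $p=2$ is $(1-2^{-z})^{-1}$ because $\chi_4$ vanishes on even integers, sum a geometric series at odd $p$, and reconcile the Euler factor at $2$ at the end. The $p=2$ computation and the final reconciliation (which correctly produces the prefactor $\frac{1-2^{2s-z}}{1-2^{2s-2z}}$) are fine. However, the odd-prime local computation contains an exponent slip that breaks the derivation as written: with $x=p^{-z}$ and $y=\chi_4(p)p^{s}$ one has $xy^2=\chi_4(p)^2p^{2s-z}=p^{2s-z}$, \emph{not} $p^{2s-2z}$. Hence the local factor is
\[
\frac{1+\chi_4(p)p^{s-z}}{(1-p^{-z})(1-p^{2s-z})},
\]
and after multiplying and dividing by $1-\chi_4(p)p^{s-z}$ the new numerator $1-p^{2s-2z}$ does \emph{not} cancel against anything in the denominator; it survives as the local factor of $1/\zeta(2z-2s)$, while the denominator factor $1-p^{2s-z}=1-p^{-(z-2s)}$ is the local factor of $\zeta(z-2s)$. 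With the exponents corrected, the local factor is exactly
\[
\frac{1-p^{2s-2z}}{(1-p^{-z})\left(1-\chi_4(p)p^{s-z}\right)(1-p^{2s-z})},
\]
which assembles into the odd part of $L(\chi_4,z-s)\zeta(z)\zeta(z-2s)/\zeta(2z-2s)$ with nothing left over.

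As written, your displayed local factor $\frac{1-p^{2s-2z}}{(1-p^{-z})(1-\chi_4(p)p^{s-z})(1-p^{2s-2z})}$ trivially reduces to $\frac{1}{(1-p^{-z})(1-\chi_4(p)p^{s-z})}$, whose product over odd $p$ is only $\zeta^{(2)}(z)L(\chi_4,z-s)$; the factors $\zeta(z-2s)$ and $1/\zeta(2z-2s)$ have vanished, so the claimed assembly does not follow from the formulas that precede it. This is a one-line fix rather than a conceptual problem. For comparison, the paper reaches the same (correct) local factor by writing $\sigma_s(\chi_4;p^{2k})$ explicitly as a sum of two finite geometric progressions and summing over $k$; your parametrization via $x$ and $y$ is slightly cleaner but demands exactly the exponent bookkeeping you yourself flagged as the main hazard.
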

\begin{proof}
First, assume that $\Re{z}>1+2\Re{s}$. The Euler product for  $Z(z,s)$ is equal to
\begin{multline}\label{eq:zzs}
Z(z,s)=\prod_{p}\left( 1+\frac{\sigma_s(\chi_4;p^2)}{p^z}+\frac{\sigma_s(\chi_4;p^4)}{p^{2z}}+\ldots\right)\\
=\left( 1+\sum_{k=1}^{\infty}\frac{\sigma_s(\chi_4;2^{2k})}{2^{kz}}\right)\prod_{p>2}\left(1+\sum_{k=1}^{\infty}\frac{\sigma_s(\chi_4;p^{2k})}{p^{kz}} \right).
\end{multline}
Note that $\chi_4(d)=0$ for even $d$, and therefore, 
\begin{equation*}
\sigma_{s}(\chi_4;2^{2k})=\sum_{d|2^{2k}}d^s\chi_4(d)=1.
\end{equation*}
Consequently,
\begin{equation}\label{eq:firstmult}
 1+\sum_{k=1}^{\infty}\frac{\sigma_s(\chi_4;2^{2k})}{2^{kz}}=\frac{1}{1-2^{-z}}.
\end{equation}
Next, we evaluate the second multiple on the right hand side of \eqref{eq:zzs}. Since $p^2\equiv 1\pmod{4}$ we have
\begin{equation}
\chi_4(p^{2m})=1, \quad \chi_4(p^{2m+1})=\chi_4(p) \text{ for any }m \in \N.
\end{equation}
Therefore,
\begin{equation}
\sigma_s(\chi_4;p^{2k})=\frac{(p^{2s})^{k+1}-1}{p^{2s}-1}+\frac{(p^{2s})^k-1}{p^{2s}-1}\chi_4(p)p^{s}.
\end{equation}
This implies that
\begin{equation}
\sum_{k=1}^{\infty}\frac{\sigma_s(\chi_4;p^{2k})}{p^{kz}}=\frac{p^{2s}+\chi_4(p)p^{s}}{(p^{2s}-1)(p^{z-2s}-1)}-\frac{1+\chi_4(p)p^s}{(p^{2s}-1)(p^z-1)}.
\end{equation}
Using the property $\chi_{4}^2(p)=1$ we infer
\begin{multline}\label{eq:secondmult}
1+\sum_{k=1}^{\infty}\frac{\sigma_s(\chi_4;p^{2k})}{p^{kz}}=\frac{1+\chi_4(p)/p^{z-s}}{(1-1/p^{z-2s})(1-1/p^z)}\\
=\frac{1-1/p^{2(z-s)}}{(1-1/p^{z-2s})(1-1/p^z)(1-\chi_4(p)/p^{z-s})}.
\end{multline}
Finally, substituting  \eqref{eq:firstmult} and \eqref{eq:secondmult} in \eqref{eq:zzs} we prove the lemma.
\end{proof}

%%%%%%%%%%%%%%%%%%%%%%%%%%%%%%%%%%%%%%%%%%%%%%%%%%%%%%%%%%%%%%%%

%%%%%%%%%%%%%%%%%%%%%%%%%%%%%%%%%%%%%%%%%%%%%%%%%%%%%%%%%%%%%%%%%%%%%%%%%%%%%%%%

%%%%%%%%%%%%%%%%%%

\subsection{Cusps and Kloosterman sums}

For a positive integer $N$, let $\Gamma=\Gamma_0(N)$ denote the Hecke congruence subgroup of level $N$.

The stabilizer of the cusp $\mathfrak{a}$ in $\Gamma$ is defined by
\begin{equation}
\Gamma_{\mathfrak{a}}:=\{\gamma \in \Gamma :  \gamma \mathfrak{a}=\mathfrak{a}\}.
\end{equation}

A scaling matrix for the cusp  $\mathfrak{a}$ is a matrix $\sigma_{ \mathfrak{a}}\in \mathbf{SL}_2(\R)$ such that
\begin{equation}
\sigma_{ \mathfrak{a}}\infty= \mathfrak{a}, \quad \sigma_{ \mathfrak{a}}^{-1}\Gamma_{\mathfrak{a}}\sigma_{ \mathfrak{a}}=\{\pm \begin{pmatrix}
1&n\\ 0&1
\end{pmatrix}:n \in \Z\}:=B.
\end{equation}
Note that the choice of scaling matrix is not unique.

Let $\chi$ be a Dirichlet character modulo $N$. This can be extended to $\Gamma$ as follows:
\begin{equation}
\chi \left(\gamma \right)=\chi(d), \quad \gamma=\begin{pmatrix}
a&b\\ cN&d
\end{pmatrix} \in \Gamma.
\end{equation}

Let $\lambda_{\mathfrak{a}}$ be defined by $\sigma_{\mathfrak{a}}^{-1}\lambda_{\mathfrak{a}}\sigma_{\mathfrak{a}}=\begin{pmatrix}1&1\\0&1\end{pmatrix}$.
The cusp $\mathfrak{a}$ is called singular for $\chi$ if $\chi(\lambda_{\mathfrak{a}})=1$. 

Suppose that $N=rs$, $(r,s)=1$. Then a cusp of the form $\mathfrak{a}=1/r$ is called an Atkin-Lehner cusp.
Note that Atkin-Lehner cusps are singular with respect to any Dirichlet character modulo $N$, see \cite[page 395]{KY}.

%%%%%%%%%%%%%%%%%%%%%%%%%%%%%%%%%%%%%%%

 Let $\kappa$ be defined by $\chi(-1)=(-1)^{\kappa}$ and let $\mathfrak{a}$, $\mathfrak{b}$ be two singular cusps  
for $\chi$ with corresponding scaling matrices  $\sigma_{\mathfrak{a}}$, $\sigma_{\mathfrak{b}}$.

Similarly to \cite[Eq. 2.3]{KY}, we define the Kloosterman sum associated to $\mathfrak{a}$, $\mathfrak{b}$ as
\begin{equation*}
S_{\mathfrak{a}\mathfrak{b}}(m,n;c;\chi):=\sum_{\gamma=\big(\begin{smallmatrix}
  a & b\\
  c & d
\end{smallmatrix}\big) \in \Gamma_{\infty}\setminus \sigma^{-1}_{\mathfrak{a}}\Gamma\sigma_{\mathfrak{b}}/\Gamma_{\infty}}\chi(\text{sgn}(c))
\overline{\chi(\sigma_{\mathfrak{a}}\gamma \sigma_{\mathfrak{b}}^{-1})}e \left( \frac{am+dn}{c}\right).
\end{equation*}
The set of allowed moduli is given by
\begin{equation}\label{eq:allowedmoduli}
 \it{C}_{\mathfrak{a},\mathfrak{b}}(N)=\{\gamma>0\text{ such that } \big(\begin{smallmatrix}
  * & *\\
  \gamma & *
\end{smallmatrix}\big)\in \sigma^{-1}_{\mathfrak{a}}\Gamma\sigma_{\mathfrak{b}}\}.
\end{equation}

%%%%%%%%%%%%%%%%%%%%%%%%%%%%%%%%%%%%%%%%%%%%%%%%%%

\subsection{Holomorphic and Maass cusp forms}

Let $H_{k}(N,\chi)$ be an orthonormal basis of holomorphic cusp forms of weight $k>0,$ $k\equiv \kappa\pmod{2}$,  level $N$  and nebentypus $\chi$.
The Fourier expansion of $f \in H_{k}(N,\chi)$ around a singular cusp $\mathfrak{a}$  with a scaling matrix $\sigma_{\mathfrak{a}}$ is given by
\begin{equation}
f(\sigma_{\mathfrak{a}}z)i(\sigma_{\mathfrak{a}},z)^{-k}=\sum_{m\geq 1}\frac{\rho_{f_{\mathfrak{a}}}(m)}{\sqrt{m}}(4\pi m)^{k/2}e(mz),
\end{equation}
 where $i(\sigma_{\mathfrak{a}},z):=cz+d$ for  $\sigma_{\mathfrak{a}}=\begin{pmatrix}
*&*\\c&d
\end{pmatrix}$.

Let $H(N,\chi)$ be an orthonormal basis of the space of Maass cusp forms of weight $\kappa\in\{0,1\}$. For the function $f \in H(N,\chi)$ (which is an eigenfunction of the Laplace-Beltrami operator with eigenvalue $1/4+t_{f}^{2}$), the following Fourier-Whittaker expansion holds around the cusp $\mathfrak{a}$  with scaling matrix $\sigma_{\mathfrak{a}}$
\begin{equation}
f(\sigma_{\mathfrak{a}}z)e^{-i\kappa\arg i(\sigma_{\mathfrak{a}},z)}=\sum_{m\neq 0}\frac{\rho_{f_{\mathfrak{a}}}(m)}{\sqrt{m}}W_{\frac{|m|}{m}\frac{\kappa}{2},it_f}(4\pi|m|y)e(mx),
\end{equation}
where $z=x+iy$ and the Whittaker function $W_{\lambda,\mu}(z)$ is defined in \cite[Section 9.22]{GR}.

For  $f \in H_{k}(N,\chi)$ or $f \in H(N,\chi)$,  we define
\begin{equation}\label{L:sym}
L(s,\text{sym}^2 f_{\infty})=\zeta^{(N)}(2s)\sum_{l=1}^{\infty}\frac{\rho_{f_{\infty}}(l^2)}{l^s},\quad \Re{s}>1,
\end{equation}
where the superscript in $\zeta^{(N)}(2s)$ means that Euler factors at primes dividing $N$ have been removed.
Shimura \cite{S} proved an analytic continuation and a functional equation for \eqref{L:sym}.

%%%%%%%%%%%%%%%%%%%%%%%%%%%%%%%%%%%%%%%%%%%%%%%%%%%%%%%%%%%%%%%%
\subsection{Eisenstein series}
Fix $\kappa=1$. For $\Gamma=\Gamma_0(N)$ the Eisenstein series associated to a singular cusp  $\mathfrak{c}$ for the nebentypus $\chi$ is defined as
\begin{equation}
E_{\mathfrak{c}}(z,s):=\sum_{\gamma \in \Gamma_{\mathfrak{c}}\setminus \Gamma}\overline{\chi}(\gamma)j_{\sigma_{\mathfrak{c}}^{-1}\gamma}(z)^{-1}
\left(\Im{(\sigma_{\mathfrak{c}}^{-1}\gamma z )}\right)^s,
\end{equation}
where $\sigma_{\mathfrak{c}}$ is a scaling matrix for $\mathfrak{c}$ and
\begin{equation}
j_{\gamma}(z):=\frac{cz+d}{|cz+d|}=e^{i\text{arg}(cz+d)}, \quad \gamma=\left(\begin{matrix}
  a & b\\
  c & d
\end{matrix}\right).
\end{equation}
\begin{thm}
Let $\mathfrak{c}$ be a singular cusp for the nebentypus $\chi$ and $\mathfrak{a}$ be the Aktin-Lehner cusp.
The following Fourier-Whittaker expansion holds
\begin{multline}
E_{\mathfrak{c}}(\sigma_{\mathfrak{a}}z,s)j_{\sigma_{\mathfrak{a}}}(z)^{-1}=\delta_{\mathfrak{a}\mathfrak{c}} y^s+\rho_{\mathfrak{a},\mathfrak{c}}(0,s)y^{1-s}\\+\sum_{m\neq 0}\rho_{\mathfrak{a},\mathfrak{c}}(m,s)e(mx)W_{\frac{|m|}{2m},s-1/2}(4\pi |m|y),
\end{multline}
where 
\begin{equation}
\rho_{\mathfrak{a}, \mathfrak{c}}(0,s)=-\frac{\sqrt{\pi}i\Gamma(s)}{\Gamma(s+1/2)}\phi_{\mathfrak{a},\mathfrak{c}}(0,s,\chi),
\end{equation}
\begin{equation}\label{eq:rhophi}
\rho_{\mathfrak{a}, \mathfrak{c}}(m,s)=-\frac{\pi^s i |m|^{s-1}}{\Gamma(s+1/2)}\phi_{\mathfrak{a},\mathfrak{c}}(m,s,\chi),
\end{equation}
\begin{multline}\label{eq:coeffFourierEisent}
\phi_{\mathfrak{a},\mathfrak{c}}(m,s,\chi)=\sum_{\gamma=\big(\begin{smallmatrix}
  * & *\\
  c & d
\end{smallmatrix}\big)\in \Gamma_{\infty} \setminus \sigma_{\mathfrak{c}}^{-1}\Gamma\sigma_{\mathfrak{a}}/\Gamma_{\infty}}\overline{\chi}(\sigma_{\mathfrak{c}}\gamma\sigma_{\mathfrak{a}}^{-1})\frac{e(md/c)}{c^{2s}}\\
=\sum_{c\in C_{\mathfrak{c,a}}(N)}\frac{S_{\mathfrak{ca}}(0,m;c;\chi)}{c^{2s}}.
\end{multline}
\end{thm}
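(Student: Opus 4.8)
The plan is to derive the Fourier--Whittaker expansion of $E_{\mathfrak{c}}(\sigma_{\mathfrak{a}}z,s)$ by the classical unfolding method, following the pattern of the unramified case (e.g.\ Iwaniec's \emph{Spectral Methods}) but keeping track of the nebentypus $\chi$ and the weight $\kappa=1$ automorphy factors. First I would write
\[
E_{\mathfrak{c}}(\sigma_{\mathfrak{a}}z,s)j_{\sigma_{\mathfrak{a}}}(z)^{-1}
=\sum_{\gamma\in\Gamma_{\mathfrak{c}}\setminus\Gamma}\overline{\chi}(\gamma)\,
j_{\sigma_{\mathfrak{c}}^{-1}\gamma\sigma_{\mathfrak{a}}}(z)^{-1}\bigl(\Im(\sigma_{\mathfrak{c}}^{-1}\gamma\sigma_{\mathfrak{a}}z)\bigr)^s,
\]
using the cocycle relation $j_{AB}(z)=j_A(Bz)j_B(z)$ to absorb $j_{\sigma_{\mathfrak{a}}}(z)^{-1}$ into the summand, so that the sum is now over $\delta=\sigma_{\mathfrak{c}}^{-1}\gamma\sigma_{\mathfrak{a}}$ ranging over $B\setminus\sigma_{\mathfrak{c}}^{-1}\Gamma\sigma_{\mathfrak{a}}$ (recall $B$ is the standard unipotent from the scaling-matrix normalization). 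Then I would split the sum according to whether the lower-left entry $c$ of $\delta$ vanishes. The $c=0$ terms contribute $\delta_{\mathfrak{a}\mathfrak{c}}y^s$ exactly when $\mathfrak{a}$ and $\mathfrak{c}$ are $\Gamma$-equivalent (with the character value forcing the Kronecker delta), and nothing otherwise.

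For the $c\neq 0$ terms I would group by $c>0$ (pairing $\pm$) and, for fixed $c$, decompose the double coset $B\backslash\{\cdots\}/B$ into residues $d\bmod c$; the standard identity $\delta z = \frac{a}{c}-\frac{1}{c(cz+d)}$ then lets me carry out the inner sum over the remaining integer translations as an integral over the real line. This produces, for each nonzero frequency $m$, a Fourier coefficient of the shape
\[
\Bigl(\sum_{c\in C_{\mathfrak{c},\mathfrak{a}}(N)}\frac{S_{\mathfrak{c}\mathfrak{a}}(0,m;c;\chi)}{c^{2s}}\Bigr)
\times\Bigl(\text{an archimedean integral}\Bigr),
\]
where the Kloosterman sum $S_{\mathfrak{c}\mathfrak{a}}(0,m;c;\chi)$ is exactly the one defined in the excerpt (the $\overline{\chi}(\sigma_{\mathfrak{c}}\gamma\sigma_{\mathfrak{a}}^{-1})$ automorphy factor and the $e(md/c)$ come out of this step), giving $\phi_{\mathfrak{a},\mathfrak{c}}(m,s,\chi)$; the $m=0$ term similarly yields $y^{1-s}$ times $\phi_{\mathfrak{a},\mathfrak{c}}(0,s,\chi)$ from the $c=d$-independent part of the integral.

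The remaining work is to evaluate the archimedean integrals and match constants. The relevant integrals are of Whittaker type: $\int_{\R}(y^2+u^2)^{-s}\bigl(\tfrac{u-iy}{u+iy}\bigr)^{\mp1/2}e(-mu)\,du$ up to normalization, and these are standard representations of $W_{\pm 1/2,\,s-1/2}(4\pi|m|y)$ after a change of variables (see Gradshteyn--Ryzhik); the weight-$1$ factor $j_{\delta}(z)^{-1}$ is precisely what turns the weight-$0$ $K$-Bessel kernel into the Whittaker function with first index $\pm1/2$, and the sign is dictated by $\sgn(m)$, explaining the subscript $\frac{|m|}{2m}$. For $m=0$ the same integral degenerates to a Beta integral producing the ratio $\Gamma(s)\Gamma(1/2)/\Gamma(s+1/2)$, and keeping the $i$ from the weight factor gives the stated $-\sqrt{\pi}i\Gamma(s)/\Gamma(s+1/2)$; for $m\neq0$ one similarly gets $-\pi^s i|m|^{s-1}/\Gamma(s+1/2)$ as in \eqref{eq:rhophi}. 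I expect the main obstacle to be bookkeeping rather than anything deep: correctly propagating the character $\overline{\chi}(\sigma_{\mathfrak{c}}\gamma\sigma_{\mathfrak{a}}^{-1})$ and the $j$-factor phases through the coset decomposition so that the phases assemble into exactly $S_{\mathfrak{c}\mathfrak{a}}(0,m;c;\chi)$ with the correct $\chi(\sgn c)$, and pinning down the overall constant (including the factor of $i$ and the power of $\pi$) in the archimedean integral; using that $\mathfrak{a}$ is an Atkin--Lehner cusp guarantees it is singular for every $\chi$ so no convergence or vanishing subtlety arises there.
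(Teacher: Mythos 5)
Your proposal follows essentially the same route as the paper's proof: unfolding via the substitution $\tau=\sigma_{\mathfrak{c}}^{-1}\gamma\sigma_{\mathfrak{a}}$ and the cocycle relation for $j$, splitting off the $c=0$ contribution $\delta_{\mathfrak{a}\mathfrak{c}}y^s$, reducing the unipotent sum to an integral over $\R$ (the paper phrases this as Poisson summation over $n\in\Z$, which is the same computation as your unfolding of the period integral), and evaluating the resulting archimedean integrals via Gradshteyn--Ryzhik to obtain the Beta-type constant for $m=0$ and the Whittaker function $W_{\frac{|m|}{2m},\,s-1/2}$ for $m\neq 0$. The only cosmetic difference is that the paper records the second Whittaker index as $1/2-s$ (equivalent by evenness of $W_{\lambda,\mu}$ in $\mu$), and, like you, it uses the singularity of the Atkin--Lehner cusp $\mathfrak{a}$ to make $\overline{\chi}(\sigma_{\mathfrak{c}}\gamma\tau\sigma_{\mathfrak{a}}^{-1})$ constant on the $B$-orbit.
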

\begin{proof}
Consider the Eisenstein series
\begin{equation}
E_{\mathfrak{c}}(\sigma_{\mathfrak{a}}z,s)=\sum_{\gamma \in \Gamma_{\mathfrak{c}}\setminus \Gamma}\overline{\chi}(\gamma)j_{\sigma_{\mathfrak{c}}^{-1}\gamma}(\sigma_{\mathfrak{a}}z)^{-1}\left(\Im{(\sigma_{\mathfrak{c}}^{-1}\gamma\sigma_{\mathfrak{a}}z)}  \right)^s.
\end{equation}

Making the change of variables $\tau:=\sigma_{\mathfrak{c}}^{-1}\gamma\sigma_{\mathfrak{a}}$ (so that $\tau \in B\setminus \sigma_{\mathfrak{c}}^{-1}\Gamma\sigma_{\mathfrak{a}}$) we infer
\begin{equation}
E_{\mathfrak{c}}(\sigma_{\mathfrak{a}}z,s)=\sum_{\tau \in B\setminus \sigma_{\mathfrak{c}}^{-1}\Gamma\sigma_{\mathfrak{a}}}\overline{\chi}(\sigma_{\mathfrak{c}}\tau\sigma_{\mathfrak{a}}^{-1})j_{\tau\sigma_{\mathfrak{a}}^{-1}}(\sigma_{\mathfrak{a}}z)^{-1}\left(\Im{(\tau z)}  \right)^s.
\end{equation}
Using the property
\begin{equation}
j_{\tau\sigma_{\mathfrak{a}}^{-1}}(\sigma_{\mathfrak{a}}z)^{-1}j_{\sigma_{\mathfrak{a}}}(z)^{-1}=j_{\tau}(z)^{-1}
\end{equation}
we find that
\begin{multline}
E_{\mathfrak{c}}(\sigma_{\mathfrak{a}}z,s)j_{\sigma_{\mathfrak{a}}}(z)^{-1}=\sum_{\tau \in B\setminus \sigma_{\mathfrak{c}}^{-1}\Gamma\sigma_{\mathfrak{a}}}\overline{\chi}(\sigma_{\mathfrak{c}}\tau\sigma_{\mathfrak{a}}^{-1})j_{\tau}(z)^{-1}\left(\Im{(\tau z)}  \right)^s\\=\delta_{\mathfrak{a}\mathfrak{c}} y^s+\sum_{\gamma \in B\setminus \sigma_{\mathfrak{c}}^{-1}\Gamma\sigma_{\mathfrak{a}}/B}\sum_{\tau \in B}\overline{\chi}(\sigma_{\mathfrak{c}}\gamma \tau\sigma_{\mathfrak{a}}^{-1})j_{\gamma\tau}(z)^{-1}\left(\Im{(\gamma \tau z)}  \right)^s.
\end{multline}
Note that $\overline{\chi}(\sigma_{\mathfrak{c}}\gamma \tau\sigma_{\mathfrak{a}}^{-1})=\overline{\chi}(\sigma_{\mathfrak{c}}\gamma \sigma_{\mathfrak{a}}^{-1})$ since $\mathfrak{a}$ is singular. Furthermore,
taking $\gamma=\begin{pmatrix}a&b\\c&d\end{pmatrix}$ and $\tau=\begin{pmatrix}1&n\\0&1\end{pmatrix}$, we obtain
\begin{equation}
\gamma\tau z=\frac{a}{c}-\frac{1}{c(c(z+n)+d)}, \quad j_{\gamma \tau}=\frac{cz+cn+d}{|cz+cn+d|}.
\end{equation}
Consequently, for $z=x+iy$ we have
\begin{equation}
\Im{(\gamma \tau z)}=\frac{y}{c^2}\frac{1}{(x+n+d/c)^2+y^2}
\end{equation}
and
\begin{multline}\label{eq:Eisenstein}
E_{\mathfrak{c}}(\sigma_{\mathfrak{a}}z,s)j_{\sigma_{\mathfrak{a}}}(z)^{-1}=\delta_{\mathfrak{a}\mathfrak{c}} y^s+\sum_{\gamma \in B\setminus \sigma_{\mathfrak{c}}^{-1}\Gamma\sigma_{\mathfrak{a}}/B}\overline{\chi}(\sigma_{\mathfrak{c}}\gamma \sigma_{\mathfrak{a}}^{-1})\\ \times \sum_{n\in \Z}\left(\frac{cz+cn+d}{|cz+cn+d|} \right)^{-1}\left(\frac{y}{c^2}\frac{1}{(x+n+d/c)^2+y^2}  \right)^s.
\end{multline}

In order to evaluate the sum over $n$ we apply the Poisson summation formula, showing that
\begin{multline}
\sum_{n\in \Z}\left(\frac{cz+cn+d}{|cz+cn+d|} \right)^{-1}\left(\frac{y}{c^2}\frac{1}{(x+n+d/c)^2+y^2}  \right)^s\\=
\sum_{m\in \Z}\int_{-\infty}^{\infty} \left( \frac{c(z+v)+d}{|c(z+v)+d|}\right)^{-1} \frac{(yc^{-2})^se(-mv)}{((x+d/c+v)+y^2)^s}dv.
\end{multline}

Making the change of variables $t:=x+d/c+v$, this is equal to
\begin{multline}\label{eq:afterPoisson}
\sum_{m\in \Z} e\left(mx+\frac{md}{c}\right)\int_{-\infty}^{\infty}\left(\frac{t+iy}{|t+iy|} \right)^{-1}\left( \frac{yc^{-2}}{t^2+y^2}\right)^se(-mt)dt.
\end{multline}

Let us assume first that $m=0$. Then 
\begin{multline}
\int_{-\infty}^{\infty}\frac{|t+iy|}{t+iy}\frac{(yc^{-2})^s}{(t^2+y^2)^s}dt=\frac{1}{i}\int_{-\infty}^{\infty}\frac{(yc^{-2})^sdt}{(y+it)^{s-1/2}(y-it)^{s+1/2}}\\
=\frac{(yc^{-2})^s}{i}\frac{2\pi (2y)^{1-2s}\Gamma(2s)}{(2s-1)\Gamma(s-1/2)\Gamma(s+1/2)}=-\frac{\sqrt{\pi}i \Gamma(s)}{\Gamma(s+1/2)}\frac{y^{1-s}}{c^{2s}},
\end{multline}
where we used \cite[Eq. 8.381.1]{GR} to evaluate the integral.
If $m \neq 0$ the integral in \eqref{eq:afterPoisson} can be computed using \cite[3.384.9]{GR} as follows
\begin{multline}
\frac{(yc^{-2})^s}{i}\int_{-\infty}^{\infty}(y+it)^{1/2-s}(y-it)^{-1/2-s}e(-mt)dt\\=\frac{(2\pi)^s2^{-s}|m|^{s-1}}{ic^{2s}\Gamma(1/2+s)}W_{\frac{|m|}{2m},1/2-s}(4\pi y|m|).
\end{multline}
Consequently, \eqref{eq:afterPoisson} is equal to
\begin{multline}\label{eq:sumoverm}
-\frac{\pi^{1/2}i\Gamma(s)}{\Gamma(s+1/2)}y^{1-s}\frac{1}{c^{2s}}\\+\frac{\pi^s}{i\Gamma(1/2+s)}\frac{1}{c^{2s}}\sum_{m\neq 0}|m|^{s-1}e(mx+md/c)W_{\frac{|m|}{2m},1/2-s}(4\pi y|m|).
\end{multline}
The statement follows by replacing the sum over $n$ in \eqref{eq:Eisenstein} by \eqref{eq:sumoverm}.
\end{proof}

%%%%%%%%%%%%%%%%%%%%%%%%%%%%%%%%%%%%%%%%%%%%%
\subsection{Kuznetsov trace formula}
In this section we state the Kuznetsov trace formula for Dirichlet multiplier system and general cusps.
To this end, we follow \cite{DI}, \cite[Section 3.3]{KY2} and \cite[Section 4.1.3]{Dra} assuming that $\kappa=1$ (i.e. $\chi(1)=-1$).

Consider the function $\psi \in C^{\infty}$ such that 
\begin{equation}
\psi(0)=\psi'(0)=0, \quad \psi^{(j)}(x)\ll (1+x)^{-2-\eta}, \quad j=0,1,2,3
\end{equation}
for some $\eta>0$.
It is also required to introduce the following transforms:
\begin{equation}\label{eq:psiHk}
\psi_H(k):=4i^k\int_{0}^{\infty}J_{k-1}(x)\psi(x)\frac{dx}{x},
\end{equation}
\begin{equation}\label{eq:psiDt}
\psi_D(t):=\frac{2\pi i t}{\sinh{(\pi t)}}\int_{0}^{\infty}(J_{2it}(x)+J_{-2it}(x))\psi(x)\frac{dx}{x},
\end{equation}
%\begin{equation}
%\breve{\phi}:=8i^{-\kappa}\cosh{(\pi t)}\int_{0}^{\infty}K_{2it}(x)\phi(x)\frac{dx}{x},
%\end{equation}
where $J_{\alpha}$  denotes the $J$-Bessel function of order $\alpha$.

For $m,n \geq 1$
\begin{equation}\label{eq:Kuznetsov}
\sum_{c\in C_{\mathfrak{a},\mathfrak{b}}(N)}\frac{S_{\mathfrak{a}\mathfrak{b}}(m,n;c;\chi)}{c}\psi\left(\frac{4\pi\sqrt{mn}}{c} \right)=
H+D+C,
\end{equation}
where
\begin{equation}
H:=\sum_{\substack{k>1\\ k\equiv 1 \pmod{2}}}\sum_{f \in H_k(N,\chi)}\psi_H(k)\Gamma(k)
\overline{\rho_{f_{\mathfrak{a}}}(m)}\rho_{f_{\mathfrak{b}}}(n),
\end{equation}
\begin{equation}
D:=\sum_{f\in H(N,\chi)}\frac{\psi_D(t_f)}{\cosh{(\pi t_f)}}\overline{\rho_{f_{\mathfrak{a}}}(m)}\rho_{f_{\mathfrak{b}}}(n),
\end{equation}
\begin{equation}
C:=\sum_{\mathfrak{c} \text{ sing.}}\frac{\sqrt{mn}}{4\pi}\int_{-\infty}^{\infty}\frac{\psi_D(t)}{\cosh{(\pi t)}}\overline{\rho_{\mathfrak{a,c}}(m,1/2+it)}\rho_{\mathfrak{b,c}}(n,1/2+it)dt.
\end{equation}

According to \eqref{eq:rhophi} the continuous part can be written as
\begin{multline}
C=\sum_{\mathfrak{c} \text{ sing.}}\frac{1}{4\pi}\int_{-\infty}^{\infty}\frac{\psi_D(t)}{\cosh{(\pi t)}}\frac{\pi }{|\Gamma(1+it)|^2}m^{-it}n^{it}
\\ \times \overline{\phi_{\mathfrak{a,c}}(m,1/2+it,\chi)}\phi_{\mathfrak{b,c}}(n,1/2+it,\chi)dt.
\end{multline}

Using  the identity (see \cite[Eq. 5.4.3]{HMF})
\begin{equation}
\Gamma(1+it)\Gamma(1-it)=\frac{\pi t}{\sinh(\pi t)}
\end{equation}
we find that
\begin{multline}\label{spectra:cont}
C=
\sum_{\mathfrak{c} \text{ sing.}}\frac{1}{4\pi}\int_{-\infty}^{\infty}\frac{\psi_D(t)\sinh(\pi t)}{t\cosh{(\pi t)}}m^{-it}n^{it}\\ \times \overline{\phi_{\mathfrak{a,c}}(m,1/2+it,\chi)}\phi_{\mathfrak{b,c}}(n,1/2+it,\chi)dt.
\end{multline}
%%%%%%%%%%%%%%%%%%%%%%

%%%%%%%%%%%%%%%%%%%%%%%%%%%%%%%%%%%%%%%%%%%%%%%%%%%%%%%%%%%%%%%%%%%%%%
\subsection{Gauss sums}\label{section:gauss}
%%%%%%%%%%%%%%%%%%%%%%%%%%%%%%%%%%%%%%%%%%%%%%%%%%%%%%%%%%%%%%%%%%%%%%

For a  Dirichlet character $\chi$ modulo $q$, we define the Gauss sum of $\chi$ by
\begin{equation}\label{eq:miyake}
g(\chi;q;m):=\sum_{\substack{u\pmod{q}\\(u,q)=1}}\chi(u)e\left(\frac{mu}{q} \right), \quad \tau(\chi):=g(\chi;q;1).
\end{equation}

 Let $\chi$ be a character modulo $q$ induced from a primitive character $\chi^*$ modulo $q^*$. Then
according to \cite[Lemma 3.1.3 (2)]{Miy} we have
\begin{equation}\label{eq:miyake2}
g(\chi;q;m)=\tau(\chi^*)\sum_{d|(m,q/q^*)}d\chi^*\left( \frac{q}{q^*d}\right)\overline{\chi^*}\left( \frac{m}{d}\right)\mu\left(\frac{q}{q^{*}d}\right).
\end{equation}

The generalized quadratic Gauss sums is given by
\begin{equation}
G(a,n;q):=\sum_{x\pmod{q}}e\left(\frac{ax^2+nx}{q} \right), \quad (a,q)=1.
\end{equation}
Let $G(a;q):=G(a,0;q)$.

The notation $\overline{a}_q$ means that $\overline{a}_qa\equiv 1\pmod{q}$.

\begin{lem}\label{lem:g^2} For $(a,q)=1$ the following identity holds
\begin{equation}
G^2(a;q)=\begin{cases}
q\chi_4(q), \quad q \text{ is odd}\\
0, \quad q\equiv 2\pmod{4}\\
2qi\chi_4(a),\quad q\equiv 0\pmod{4}.
\end{cases}
\end{equation}
\end{lem}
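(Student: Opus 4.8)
The plan is to compute $G(a;q)^2 = G(a;q)G(a;q)$ directly from the classical evaluation of the quadratic Gauss sum $G(a;q) = \sum_{x \bmod q} e(ax^2/q)$, treating the three residue classes of $q$ modulo $4$ separately. Recall the standard formula (Gauss): for $q$ odd, $G(a;q) = \left(\frac{a}{q}\right)\varepsilon_q\sqrt{q}$ where $\varepsilon_q = 1$ if $q \equiv 1 \Mod 4$ and $\varepsilon_q = i$ if $q \equiv 3 \Mod 4$; for $q \equiv 2 \Mod 4$, $G(a;q) = 0$; and for $q \equiv 0 \Mod 4$, $G(a;q) = (1+i)\left(\frac{q}{a}\right)\sqrt{q}$ (with $\left(\frac{q}{a}\right)$ the Jacobi symbol, $a$ odd since $(a,q)=1$). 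I would either cite this from a standard reference (e.g.\ \cite[Chapter 1]{Miy} or Berndt--Evans--Williams) or state it as a known fact; squaring it in each case is then routine.

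First, for $q$ odd: $G(a;q)^2 = \left(\frac{a}{q}\right)^2 \varepsilon_q^2 q = \varepsilon_q^2 q$ since $(a,q)=1$ forces $\left(\frac{a}{q}\right)^2 = 1$. Now $\varepsilon_q^2 = 1$ if $q \equiv 1 \Mod 4$ and $\varepsilon_q^2 = i^2 = -1$ if $q \equiv 3 \Mod 4$, i.e.\ $\varepsilon_q^2 = \chi_4(q)$. Hence $G(a;q)^2 = q\chi_4(q)$. Second, for $q \equiv 2 \Mod 4$: $G(a;q) = 0$ so $G(a;q)^2 = 0$. Third, for $q \equiv 0 \Mod 4$: $G(a;q)^2 = (1+i)^2 \left(\frac{q}{a}\right)^2 q = 2i \left(\frac{q}{a}\right)^2 q$. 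Here $a$ is odd, so $\left(\frac{q}{a}\right)^2 = 1$ (it is $\pm 1$), giving $G(a;q)^2 = 2qi$. But the claimed answer has an extra factor $\chi_4(a)$; this discrepancy means the formula for $G(a;q)$ when $4 \mid q$ must in fact carry a factor depending on $a \bmod 4$ as well — indeed the correct formula is $G(a;q) = (1+i)\,\overline{\chi_4}(a)\left(\frac{q}{a}\right)\sqrt q$ (equivalently $\epsilon_a^{-1}(1+i)\left(\frac{q}{a}\right)\sqrt q$), and squaring gives $(1+i)^2 \chi_4(a)^{-2}\cdot(\cdots)$... so one must be careful. The cleanest route is actually to avoid the subtlety by a direct manipulation: for $4 \mid q$ write $G(a;q)^2$, use multiplicativity to reduce to $q$ a power of $2$ times an odd part, and compute the $2$-power Gauss sum explicitly from the known closed form $G(a;2^k)$.

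The step I expect to be the main obstacle is getting the $a$-dependence right in the case $q \equiv 0 \Mod 4$ — the factor $\chi_4(a)$ in the statement. The issue is that the commonly quoted formula for $G(a;q)$ with $4 \mid q$ already contains an $\epsilon_a$-type factor, and one must track it through squaring. To handle this cleanly I would reduce to the prime-power case via the multiplicativity relation $G(a;q_1q_2) = G(a\overline{q_2}_{q_1};q_1)\,G(a\overline{q_1}_{q_2};q_2)$ for $(q_1,q_2)=1$: writing $q = 2^k m$ with $m$ odd and $k \geq 2$, the odd part contributes $\left(\frac{2^k \overline{(2^k)}_m \cdot a}{m}\right)^2\varepsilon_m^2 m = \varepsilon_m^2 m = \chi_4(m)m$ after squaring (the Jacobi symbol squares to $1$), and the $2$-power part contributes $G(b;2^k)^2$ with $b = a\overline{m}_{2^k}$ odd. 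For $k \geq 2$ the standard evaluation gives $G(b;2^k) = 2^{k/2}(1+i)\,e(-b/4)$ ... more precisely $G(b;2^k)^2 = 2^k \cdot 2i \cdot \chi_4(b)$ after a short computation using $e(-b/2) = \chi_4(b)$-type identities for odd $b$. Combining, $G(a;q)^2 = \chi_4(m)\chi_4(b)\cdot 2qi$, and since $\chi_4(m)\chi_4(b) = \chi_4(mb) = \chi_4(m \cdot a \overline m_{2^k}) = \chi_4(a\cdot m\overline m_{2^k})$ and $m \overline m_{2^k} \equiv 1 \Mod{2^k}$ hence $\equiv 1 \Mod 4$, this equals $\chi_4(a)$, yielding $G(a;q)^2 = 2qi\chi_4(a)$ as claimed. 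The remaining work is purely the explicit evaluation of $G(b;2^k)^2$ for $k \geq 2$, which is a finite computation I would carry out directly.
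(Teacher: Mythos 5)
Your proposal is correct and follows essentially the same route as the paper: both arguments simply square the classical closed-form evaluations of $G(a;q)$ in the three residue classes of $q$ modulo $4$. The only difference is in the case $4\mid q$, where the paper cites Malyshev's formula directly in the form $G^2(a;q)=q(1+i^a)^2=2qi\chi_4(a)$ (so the $a$-dependence you flagged as the main subtlety is already built into the cited evaluation), whereas you recover the factor $\chi_4(a)$ by reducing to prime powers via twisted multiplicativity — a sound but slightly longer detour.
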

\begin{proof}
If $q$ is odd we have by \cite[Eq. 23]{Mal}
\begin{equation}
G^2(a;q)=\left(\left( \frac{a}{q}\right)i^{(\frac{q-1}{2})^2}\sqrt{q} \right)^2=q(-1)^{(\frac{a-1}{2})^2}=q\chi_4(q).
\end{equation}
If $q\equiv 2\pmod{4}$, then using \cite[Eq. 25]{Mal} we find that $G(a;q)=0$. Finally, if $q\equiv 0\pmod{4}$, then \cite[Eq. 25]{Mal} implies that
\begin{equation}
G^2(a;q)=q(1+i^a)^2=2qi^a=2qi\chi_4(a).
\end{equation}
\end{proof}

\begin{lem} If $n$ is even and $q$ is odd, then 
\begin{equation}\label{eq:gfornevenqodd}
G(a,n;q)=e\left(-\frac{\overline{a}_q(n/2)^2}{q} \right)G(a;q)=e\left(-\frac{\overline{(4a)}_qn^2}{q} \right)G(a;q).
\end{equation}
\end{lem}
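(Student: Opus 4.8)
The plan is to complete the square in the exponent. Since $q$ is odd and $(a,q)=1$, both $2$ and $a$ (hence also $4a$) are invertible modulo $q$; and since $n$ is even, $n/2$ is an integer, so $\overline{a}_q(n/2)$ is an integer. First I would rewrite, working modulo $q$,
\[
ax^2+nx\equiv a\Bigl(x+\overline{a}_q\tfrac{n}{2}\Bigr)^2-a\,\overline{a}_q^{\,2}\Bigl(\tfrac{n}{2}\Bigr)^2\equiv a\Bigl(x+\overline{a}_q\tfrac{n}{2}\Bigr)^2-\overline{a}_q\Bigl(\tfrac{n}{2}\Bigr)^2\pmod q,
\]
where in the last step I used $a\,\overline{a}_q^{\,2}=(a\overline{a}_q)\overline{a}_q\equiv\overline{a}_q\pmod q$.

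Next I would substitute this congruence into the definition of $G(a,n;q)=\sum_{x\pmod q}e((ax^2+nx)/q)$, pull the factor $e\bigl(-\overline{a}_q(n/2)^2/q\bigr)$ out of the sum, and observe that as $x$ runs over a complete residue system modulo $q$ so does $x+\overline{a}_q(n/2)$. Hence the remaining sum equals $\sum_{y\pmod q}e(ay^2/q)=G(a;q)$, which gives the first claimed identity.

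For the second identity I would note that, modulo $q$, one has $(n/2)^2=n^2/4\equiv\overline{4}_q\,n^2\pmod q$ because $4\cdot\overline{4}_q\equiv1\pmod q$; therefore $\overline{a}_q(n/2)^2\equiv\overline{a}_q\,\overline{4}_q\,n^2\equiv\overline{(4a)}_q\,n^2\pmod q$, the last congruence holding since $(4a,q)=1$ forces $\overline{(4a)}_q\equiv\overline{4}_q\,\overline{a}_q\pmod q$. Substituting this into the exponential in the first identity completes the proof.

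I do not expect a genuine obstacle here: the whole argument is the elementary completing-the-square manipulation, and the only points requiring a little care are that the quantities $n/2$, $\overline{2}_q$ and $\overline{4}_q$ are well defined and that the relevant inverses multiply as expected modulo $q$ — all of which is guaranteed precisely by the hypotheses that $n$ is even and $q$ is odd with $(a,q)=1$.
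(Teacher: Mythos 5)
Your proposal is correct and complete. The paper itself does not give an argument here at all: its ``proof'' is a one-line citation to Malyshev \cite[Eq.~(26)]{Mal}, so there is nothing in the text for your computation to diverge from. What you have written is the standard completing-the-square derivation of that cited formula, and every step checks out: the congruence $ax^2+nx\equiv a\bigl(x+\overline{a}_q\tfrac{n}{2}\bigr)^2-\overline{a}_q\bigl(\tfrac{n}{2}\bigr)^2\pmod q$ is verified by expanding and using $a\overline{a}_q\equiv 1$, the shift $x\mapsto x+\overline{a}_q(n/2)$ permutes the residues modulo $q$, the exponential $e(\cdot/q)$ depends only on the residue class modulo $q$ so the prefactor is well defined, and the passage from $\overline{a}_q(n/2)^2$ to $\overline{(4a)}_qn^2$ via $4\cdot\overline{4}_q\equiv 1\pmod q$ and $\overline{(4a)}_q\equiv\overline{4}_q\,\overline{a}_q$ is exactly right. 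The only thing your version buys beyond the paper's citation is self-containedness; conversely, the citation to \cite{Mal} covers the companion cases (e.g.\ $q$ even, or $n$ odd) used elsewhere in Section~\ref{section:gauss} under one roof, which is presumably why the author chose to reference rather than reprove.
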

\begin{proof}
See \cite[Eq. (26)]{Mal}.
\end{proof}

\begin{lem}\label{lem:neven_and_qeven}  Suppose that $n$ and $q$ are even. 

If $q\equiv2\pmod{4}$, then $G(a,n;q)=0$.

If $q\equiv0\pmod{4}$, then
\begin{equation}\label{eq:Mal25}
G(a,n;q)=e\left(-\frac{\overline{a}_q(n/2)^2}{q} \right)G(a;q).
\end{equation}
\end{lem}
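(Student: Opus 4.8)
The plan is to reduce the statement, as is done for the odd-$q$ case in the preceding lemma, to a direct application of the formulas for the quadratic Gauss sum recorded in Malyshev's paper \cite{Mal}. Write $n=2m$ and complete the square in the exponent: for $(a,q)=1$ with $q$ even we have
\begin{equation}
ax^2+nx\equiv a\left(x+\overline{a}_q m\right)^2-\overline{a}_q m^2\Mod{q},
\end{equation}
which is legitimate because $a$ is invertible mod $q$; here the cross term $2\cdot a\cdot x\cdot(\overline{a}_q m)=2mx\cdot(a\overline{a}_q)\equiv nx$, and the constant is $a\overline{a}_q^2 m^2\equiv \overline{a}_q m^2$. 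Shifting the summation variable $x\mapsto x-\overline{a}_q m$ over a complete residue system mod $q$ then gives
\begin{equation}
G(a,n;q)=e\left(-\frac{\overline{a}_q m^2}{q}\right)\sum_{x\Mod{q}}e\left(\frac{ax^2}{q}\right)=e\left(-\frac{\overline{a}_q(n/2)^2}{q}\right)G(a;q),
\end{equation}
valid for every even $q$, with no restriction on the residue of $q$ mod $4$.

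This already proves \eqref{eq:Mal25} in the case $q\equiv 0\Mod 4$. For the case $q\equiv 2\Mod 4$, the same completion of the square gives $G(a,n;q)=e(\cdots)G(a;q)$, and by Lemma \ref{lem:g^2} (or directly by \cite[Eq. 25]{Mal}) we have $G(a;q)=0$ when $q\equiv 2\Mod 4$, so $G(a,n;q)=0$ as claimed. Thus both assertions follow at once from the single identity above together with the evaluation $G(a;q)=0$ for $q\equiv 2\Mod 4$ already established in Lemma \ref{lem:g^2}. Alternatively, one can simply cite \cite[Eq. (26)]{Mal}, exactly as in the proof of \eqref{eq:gfornevenqodd}; the point worth emphasizing is that the completion-of-the-square argument there requires only that $a$ be a unit and $n$ be even (so that $n/2$ is an integer and the shift $\overline{a}_q(n/2)$ makes sense), and is insensitive to whether $q$ is odd or even.

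The only subtlety — and the step I would be most careful about — is the interchange between summing $ax^2+nx$ over residues mod $q$ and summing $a(x+\overline{a}_q m)^2$ over residues mod $q$: one must check that $x\mapsto x+\overline{a}_q m$ is a well-defined bijection of $\Z/q\Z$, which it is since $\overline{a}_q(n/2)$ is a fixed integer, and that the quantity $e(-\overline{a}_q(n/2)^2/q)$ does not depend on the choice of representative $\overline{a}_q$ mod $q$, which holds because $(n/2)^2\overline{a}_q$ is well-defined mod $q$ (as $n/2$ is an integer). Both checks are routine, so the proof is essentially a one-line reduction to \cite{Mal}; no genuine obstacle is expected.
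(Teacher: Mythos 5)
Your proof is correct and follows essentially the same route as the paper: the paper's proof simply cites \cite[Eq.~(26)]{Mal} for the identity \eqref{eq:Mal25} (valid for all even $n$ and $q$) and then invokes \cite[Eq.~(25)]{Mal} for the vanishing of $G(a;q)$ when $q\equiv 2\pmod{4}$, which is exactly the logical structure of your argument. The only difference is that you re-derive Malyshev's identity by completing the square, a correct and harmless elaboration of the citation.
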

\begin{proof}
The relation \eqref{eq:Mal25} follows from \cite[Eq. (26)]{Mal} for any even $n$ and $q$.
However, $G(a;q)=0$ for $q\equiv2\pmod{4}$ by \cite[Eq. (25)]{Mal}.

\end{proof}

\begin{lem} Suppose that $n$ and $q$ are odd. Then 
\begin{equation}\label{eq:nqodd}
G(a,n;q)=e\left(-\frac{\overline{(4a)}_qn^2}{q} \right)G(a;q).
\end{equation}
\end{lem}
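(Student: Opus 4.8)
The final statement to prove is Lemma stating that for odd $n$ and odd $q$ with $(a,q)=1$,
\[
G(a,n;q)=e\left(-\frac{\overline{(4a)}_qn^2}{q}\right)G(a;q).
\]

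The plan is to complete the square in the exponent defining $G(a,n;q)$. Since $q$ is odd, $2$ is invertible modulo $q$, and so is $4a$. First I would write $ax^2+nx = a(x+\overline{(2a)}_q n)^2 - a\overline{(2a)}_q^2 n^2 \pmod q$, using that $\overline{(2a)}_q$ makes sense. Then a change of variable $y = x + \overline{(2a)}_q n$ (a bijection on $\Z/q\Z$) gives
\[
G(a,n;q)=e\left(-\frac{a\overline{(2a)}_q^2 n^2}{q}\right)\sum_{y\pmod q}e\left(\frac{ay^2}{q}\right)=e\left(-\frac{a\overline{(2a)}_q^2 n^2}{q}\right)G(a;q).
\]
It remains to simplify the prefactor: $a\overline{(2a)}_q^2 \equiv a\cdot \overline{(4a^2)}_q = \overline{(4a)}_q \pmod q$, since $a\cdot\overline{(4a^2)}_q \cdot 4a = 4a^2\overline{(4a^2)}_q \equiv 1$. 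This yields exactly the claimed identity. Alternatively, since the preceding two lemmas (equations \eqref{eq:gfornevenqodd} and \eqref{eq:Mal25}) already record the same completing-the-square computation in the even-$n$ cases and cite \cite[Eq. (26)]{Mal}, the cleanest route is simply to invoke \cite[Eq. (26)]{Mal}, which gives the completed-square form valid for all $n$ coprime considerations aside, and then specialize.

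Concretely I would write: \emph{By \cite[Eq. (26)]{Mal}, completing the square in $ax^2+nx$ modulo $q$ (legitimate since $q$ is odd, so $4a$ is invertible) and translating the summation variable gives $G(a,n;q)=e(-\overline{(4a)}_q n^2/q)G(a;q)$.} If one wants to be self-contained, spell out the two displayed lines above.

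I do not anticipate a genuine obstacle here; the only point requiring a little care is the bookkeeping of modular inverses — checking that $a\overline{(2a)}_q^2\equiv\overline{(4a)}_q\pmod q$ and that the substitution $y=x+\overline{(2a)}_qn$ is a bijection of $\Z/q\Z$ — both of which are routine given that $q$ is odd. Unlike the case $q\equiv 2\pmod 4$ (where $2$ is not invertible and the sum vanishes) or $q\equiv 0\pmod 4$ (where one can only halve $n$, not quarter it, hence the $(n/2)^2$ form in \eqref{eq:Mal25}), oddness of $q$ is exactly what lets us pull out $\overline{(4a)}_q n^2$ cleanly, matching the second form in \eqref{eq:gfornevenqodd}.
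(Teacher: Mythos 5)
Your proof is correct, and the underlying idea — completing the square in $ax^2+nx$, which is possible because $q$ odd makes $2a$ invertible — is the same one driving the paper's argument; your inverse bookkeeping $a\overline{(2a)}_q^2\equiv\overline{(4a)}_q\pmod{q}$ and the bijectivity of the shift are both fine. The only difference is organizational: the paper does not complete the square itself but quotes \cite[Eq.~(27)]{Mal} (not Eq.~(26), which is the even-$n$ case you mention), obtaining $G(a,n;q)=e\bigl(-\overline{a}_q(\tfrac{n+q}{2})^2/q\bigr)G(a;q)$, and then converts the exponent via $n^2\equiv(n+q)^2\pmod{q}$ and $\overline{(4a)}_q n^2\equiv\overline{a}_q(\tfrac{n+q}{2})^2\pmod{q}$; your self-contained computation buys independence from Malyshev's normalization at the cost of a few extra lines.
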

\begin{proof}
In this case according to \cite[Eq. (27)]{Mal} we have
\begin{equation}
G(a,n;q)=e\left(-\frac{\overline{a}_q(\frac{n+q}{2})^2}{q} \right)G(a;q).
\end{equation}
Note that it follows from the relation
\begin{equation}
n^2\equiv(n+q)^2\pmod{q}
\end{equation}
that
\begin{equation}
\overline{(4a)}_qn^2\equiv  \overline{a}_q\left(\frac{n+q}{2}\right)^2\pmod{q}.
\end{equation}
This implies the statement.
\end{proof}

\begin{lem}\label{lem:nodd_and_qeven} 
Suppose that $n$ is odd and $q$ is even.  If $q\equiv 0\pmod{4}$, then $G(a,n;q)=0$.
Otherwise, we can write $q=2r$ with $r$ odd. 
\begin{equation}\label{eq:8a}
G(a,n;q)=2e\left(-\frac{\overline{(8a)}_rn^2}{r} \right)G(2a;r).
\end{equation}
\end{lem}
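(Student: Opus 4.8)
The plan is to handle the two cases by different devices. For $q\equiv0\pmod4$ I would use the substitution $x\mapsto x+q/2$, which is a fixed-point-free involution of $\Z/q\Z$. A direct expansion gives
\[
a(x+q/2)^2+n(x+q/2)-(ax^2+nx)=aqx+\frac{aq^2}{4}+\frac{nq}{2},
\]
so after division by $q$ the summand $e((ax^2+nx)/q)$ picks up the factor $e(ax+aq/4+n/2)$. Since $ax\in\Z$ and $4\mid q$ forces $aq/4\in\Z$, while $n$ odd gives $e(n/2)=-1$, the two terms of every pair $\{x,\,x+q/2\}$ cancel, and hence $G(a,n;q)=0$.

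For $q=2r$ with $r$ odd, observe first that $(a,2r)=1$ forces both $a$ and $r$ to be odd, so also $(2a,r)=1$. The first step is to establish the multiplicativity of the generalized quadratic Gauss sum along the coprime factorization $q=2\cdot r$. Writing $x=ru+2v$ with $u$ ranging over $\Z/2\Z$ and $v$ over $\Z/r\Z$ (a bijection onto $\Z/2r\Z$ by the Chinese remainder theorem), one computes
\[
\frac{ax^2+nx}{2r}=\frac{aru^2+nu}{2}+\frac{2av^2+nv}{r}+2auv,
\]
where the cross term $2auv$ is an integer and hence invisible inside $e(\cdot)$. This yields $G(a,n;2r)=G(ar,n;2)\,G(2a,n;r)$.

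The second step is to evaluate the two factors separately. Because $ar$ and $n$ are both odd, $ar+n$ is even, so $G(ar,n;2)=1+e((ar+n)/2)=2$. For $G(2a,n;r)$ I would invoke the already established identity \eqref{eq:nqodd} with base $2a$ and modulus $r$ (legitimate since $n$ and $r$ are odd and $(2a,r)=1$), obtaining
\[
G(2a,n;r)=e\!\left(-\frac{\overline{(8a)}_r\,n^2}{r}\right)G(2a;r),
\]
after rewriting $\overline{(4\cdot2a)}_r=\overline{(8a)}_r$. Multiplying the two pieces gives \eqref{eq:8a}.

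The genuinely substantive step is the multiplicativity identity $G(a,n;2r)=G(ar,n;2)\,G(2a,n;r)$; everything else is bookkeeping. Within that step the point to verify is that the cross term $2auv$ drops out modulo $1$, which uses only the coprimality of $2$ and $r$ and not the oddness of $r$; the remaining care is in tracking the modular inverse $\overline{(8a)}_r$ when applying \eqref{eq:nqodd}. As an alternative, both cases can be read off directly from \cite[Eqs. (25)--(27)]{Mal}, but the route above has the advantage of being self-contained.
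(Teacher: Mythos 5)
Your proof is correct and follows essentially the same route as the paper: twisted multiplicativity of the quadratic Gauss sum along the coprime factorization $q=2\cdot r$, the evaluation $G(ar,n;2)=2$, and an appeal to \eqref{eq:nqodd} for the odd modulus $r$. The only difference is that for $q\equiv 0\pmod{4}$ the paper simply cites \cite[Eq. (28)]{Mal}, whereas you prove the vanishing directly via the involution $x\mapsto x+q/2$; that computation is correct and makes the case self-contained.
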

\begin{proof}
If $n$ is odd and $q\equiv 0\pmod{4}$, then $G(a,n;q)=0$ by \cite[Eq. (28)]{Mal}.
Assume that $q=2r$ with $r$ odd. Using the twisted multiplicativity of Gauss sums, we find
\begin{equation}
G(a,n;2r)=G(a\overline{2}_r,n\overline{2}_r;r)G(a\overline{r}_2,n\overline{r}_2;2).
\end{equation}
By direct computations $G(a\overline{r}_2,n\overline{r}_2;2)=2$. Finally,
\begin{equation}
G(a\overline{2}_r,n\overline{2}_r;r)=G(2a,n;r)=e\left(-\frac{\overline{(8a)}_rn^2}{r} \right)G(2a;r),
\end{equation}
where we used \eqref{eq:nqodd} to evaluate $G(2a,n;r)$.
\end{proof}

%%%%%%%%%%%%%%%%%%%%%%%%%%%%%%%%%%%%%%%%%%%%%%%%%%%%%%%%%%%%%%%%%%%%%%
%%%%%%%%%%%%%%%%%%%%%%%%%%%%%%%%%%%%%%%%%%%%%%%%%%%%%%%%%%%%%%%%%%%%%%
\section{Diagonal and non-diagonal terms}\label{sect:diagnondiag}
%%%%%%%%%%%%%%%%%%%%%%%%%%%%%%%%%%%%%%%%%%%%%%%%%%%%%%%%%%%%%%%%%%%%%%
Assuming that $s$ is sufficiently large, we prove in this section an explicit formula for $$\sum_{l=1}^{\infty}\omega(l)\mathscr{L}_{n^2-4l^2}(s)$$ with diagonal and non-diagonal terms.
Applying the results of Section \ref{section:gauss}, we compute the diagonal term explicitly and prove an  expression for the non-diagonal term in terms of sums of Kloosterman sums
suitable for application of the Kuznetsov trace formula.

\begin{lem}
For $\Re{s}>3/2$ the following formula holds
\begin{multline}\label{eq:mainform}
\sum_{l=1}^{\infty}\omega(l)\mathscr{L}_{n^2-4l^2}(s)=\\ \widehat{\omega}(1)\zeta(2s)\sum_{q=1}^{\infty}\frac{1}{q^{2+s}}\sum_{c,d\pmod{q}}S(d^2,c^2;q)e\left(\frac{nc}{q} \right)\\+
\frac{\zeta(2s)}{\pi^{s-1/2}}\sum_{l=1}^{\infty}\frac{1}{l^s}\sum_{q=1}^{\infty}\frac{f\left( \omega,s;4\pi n l/q\right)}{q^{2}}
\sum_{c,d\pmod{q}}S(d^2,c^2;q)e\left( \frac{nc+ld}{q}\right),
\end{multline}
where $S(d,c;q)$ is the ordinary Kloosterman sum and for $a<1$
\begin{equation}\label{eq:fwsx}
f(\omega,s;x):=\frac{1}{2\pi i}\int_{(a)}\widehat{\omega}(\alpha)\frac{\Gamma(1/2-\alpha/2)}{\Gamma(\alpha/2)}\left( \frac{x}{4n}\right)^{\alpha+s-1}d\alpha.
\end{equation}
\end{lem}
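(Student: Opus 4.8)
The plan is to start from the definitions and carry out the formal manipulations in the range $\Re s > 3/2$, where everything converges absolutely, and only at the end recognize the resulting arithmetic sums as Kloosterman sums twisted by exponentials. First I would insert the defining Dirichlet series \eqref{Lbyk} for $\mathscr{L}_{n^2-4l^2}(s)$, so that the left-hand side becomes
\[
\frac{\zeta(2s)}{\zeta(s)}\sum_{l=1}^{\infty}\omega(l)\sum_{q=1}^{\infty}\frac{b_q(n^2-4l^2)}{q^{s}}.
\]
The key is to rewrite $b_q(m)$, the number of solutions of $x^2\equiv m\Mod{4q}$, via additive characters: $b_q(m)=\frac{1}{4q}\sum_{c\Mod{4q}}\sum_{x\Mod{2q}}e\bigl(c(x^2-m)/(4q)\bigr)$, and then split the $c$-sum and the $x$-sum to produce quadratic Gauss sums. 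After disentangling the modulus $4q$ (using $m=n^2-4l^2$ so that the dependence on $l$ enters only through a linear exponential $e(-4lc\cdot(\text{something})/(4q))$, while the dependence on $n$ enters through $e(-n^2 c/(4q))$), one is left with a bilinear form in two residues $c,d\Mod q$ weighted by $e((nc+ld)/q)$ and an ordinary Kloosterman sum $S(d^2,c^2;q)$. This is exactly the arithmetic content of \eqref{eq:mainform}; the Gauss-sum evaluations quoted in Section \ref{section:gauss} are what make the reduction to $S(d^2,c^2;q)$ clean, and the factor $\zeta(2s)/\zeta(s)$ combines with a sum over the common divisor to leave the $\zeta(2s)$ prefactor.

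Next I would separate the analytic part. The $l$-sum $\sum_l \omega(l)e(-4lc\gamma)$ type expression must be replaced by something uniform in the modulus, and for this I would pass to the Mellin transform of $\omega$: writing $\omega(l)=\frac{1}{2\pi i}\int_{(a)}\widehat{\omega}(\alpha)l^{-\alpha}\,d\alpha$ and then evaluating $\sum_l l^{-\alpha}e(\pm l\theta)$ against the resulting oscillatory kernel. The special function identity needed here is the Mellin–Barnes evaluation of $\int_0^\infty y^{\alpha-1}\cos(cy)\,dy = c^{-\alpha}\Gamma(\alpha)\cos(\pi\alpha/2)$ (equivalently the reflection-type ratio $\Gamma(1/2-\alpha/2)/\Gamma(\alpha/2)$ after combining with the $\Gamma$-factors produced by the $q$-sum normalization), which is precisely the combination appearing in \eqref{eq:fwsx}. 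The $\alpha=0$ residue — or rather the contribution of the trivial frequency, where the exponential $e(ld/q)$ degenerates to $1$ (i.e. $q\mid d$) — produces the first line of \eqref{eq:mainform} with the factor $\widehat{\omega}(1)$, since $\widehat{\omega}(1)=\int_0^\infty\omega(y)\,dy$; the nonzero frequencies produce the second line, with the function $f(\omega,s;4\pi nl/q)$ capturing the Bessel-type oscillation at argument proportional to $nl/q$.

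I expect the main obstacle to be the careful separation of the modulus-$4q$ Gauss sum into a product of an honest quadratic Gauss sum (to be handled by Lemmas \ref{lem:g^2}–\ref{lem:nodd_and_qeven}) and a ``Kloosterman-like'' piece in $c,d\Mod q$, while keeping track of the parity of $n$ and $q$ — the case analysis ($q$ odd, $q\equiv 2\Mod 4$, $q\equiv 0\Mod 4$, and likewise for $n$) is exactly where the Gauss-sum lemmas of Section \ref{section:gauss} are consumed, and where sign/phase factors like $\overline{(4a)}_q$ must be matched precisely. A secondary technical point is justifying the interchange of the $l$-sum with the Mellin contour integral and the $q$-sum: this is where $\Re s > 3/2$ is used, guaranteeing absolute convergence of $\sum_q q^{-2-s}\cdot(\text{divisor bound for }S(d^2,c^2;q))$ against the polynomially bounded $f$. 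Once absolute convergence is in hand the rearrangements are routine, and the statement follows by collecting the trivial-frequency and nontrivial-frequency contributions.
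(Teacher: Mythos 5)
Your overall architecture --- Mellin-transform $\omega$, exploit that the arithmetic weight depends on $l$ only through a residue class, dualize the $l$-sum, and read off the diagonal term from the trivial frequency and the off-diagonal terms from the nonzero frequencies via the kernel $f$ --- is the paper's argument. The one structural difference is at the entry point: the paper never expands $b_q(n^2-4l^2)$ by additive characters. It quotes the identity $\mathscr{L}_{n^2-4l^2}(s)=\zeta(2s)\sum_{q}q^{-1-s}\sum_{c\pmod{q}}S(l^2,c^2;q)e(nc/q)$ from \cite[Eq. 4-9]{BF1}, substitutes it into the Mellin representation, and interchanges the $q$- and $l$-sums (this is where $\Re{s}>3/2$ is used). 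Your plan to re-derive that identity from \eqref{Lbyk} is legitimate, but it is the content of the cited result, not of the Gauss-sum lemmas of Section \ref{section:gauss}; those are consumed only in the next step, when $K(n,l;q)$ is evaluated.

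The genuine gap is in your account of where the second residue variable $d$ and the phase $e(ld/q)$ come from. After expanding $b_q(n^2-4l^2)$ (or after quoting BF1), the dependence on the original variable $l$ is \emph{quadratic}, through $e(al^2/q)$ inside $S(l^2,c^2;q)$ --- not ``only through a linear exponential,'' and certainly not already of the form $e((nc+ld)/q)$: at that stage the $l$-sum still carries the weight $\omega(l)$, whereas in \eqref{eq:mainform} the $l$ of the second line carries the dual weight $l^{-s}f(\omega,s;4\pi nl/q)$, so it cannot be the same variable. In the paper, $d$ is the residue class of the original $l$ modulo $q$ (one writes $\sum_l\omega(l)S(l^2,c^2;q)=\sum_{d\pmod{q}}S(d^2,c^2;q)\sum_{l\equiv d\pmod{q}}\omega(l)$), and the linear phase $e(\pm ld/q)$ in a \emph{new} variable $l$ appears only after applying the functional equation of the Lerch zeta function to $\sum_{l\equiv d\pmod{q}}l^{-\alpha}$; that same step produces $\Gamma(1-\alpha)\sin(\pi\alpha/2)=\pi^{1/2}2^{-\alpha}\Gamma(1/2-\alpha/2)/\Gamma(\alpha/2)$ and hence the kernel $f$ of \eqref{eq:fwsx}. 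Relatedly, the diagonal term is the residue at $\alpha=1$ (not $\alpha=0$) of that Lerch zeta, crossed when the contour is moved from $a>1$ to $a<0$; it arises for every $d\pmod{q}$, not only for $q\mid d$, which is why the first line of \eqref{eq:mainform} retains the full double sum over $c,d\pmod{q}$ with the extra factor $q^{-1}$. With these points corrected, your sketch coincides with the paper's proof.
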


\begin{proof}
Using the Mellin transform for the function $\omega$ we have
\begin{equation}\label{eq:sumwithomega}
\sum_{l=1}^{\infty}\omega(l)\mathscr{L}_{n^2-4l^2}(s)=\frac{1}{2\pi i}\int_{(a)}\widehat{\omega}(\alpha)\sum_{l=1}^{\infty}\frac{\mathscr{L}_{n^2-4l^2}(s)}{l^{\alpha}}d\alpha,
\end{equation}
where $a>1$.
According to \cite[Eq. 4-9]{BF1}, the function $\mathscr{L}_{n^2-4l^2}(s)$ can be written in terms of sums of Kloosterman sums
\begin{equation}
\mathscr{L}_{n^2-4l^2}(s)=\zeta(2s)\sum_{q=1}^{\infty}\frac{1}{q^{1+s}}\sum_{c\pmod{q}}S(l^2,c^2;q)e\left(\frac{nc}{q} \right).
\end{equation}

 Substituting this expression into \eqref{eq:sumwithomega} we can change the order of summation, summing with respect to $q$ first and over $l$ second, as long as $\Re{s}>3/2$. Furthermore, dividing the range of summation for $l$ into arithmetic progressions, we obtain
\begin{multline}\label{eq:sumwithomega2}
\sum_{l=1}^{\infty}\omega(l)\mathscr{L}_{n^2-4l^2}(s)=\frac{\zeta(2s)}{2\pi i}\int_{(a)}\widehat{\omega}(\alpha)\sum_{q=1}^{\infty}\frac{1}{q^{1+s}}\\ \times \sum_{c,d\pmod{q}}S(d^2,c^2;q)e\left( \frac{nc}{q}\right)
\sum_{\substack{l\geq 1\\ l\equiv d\pmod{q}}}\frac{1}{l^{\alpha}}d\alpha.
\end{multline}

The inner sum on the right-hand side of \eqref{eq:sumwithomega2} can be written in terms of the Lerch zeta function $\zeta(a,b;\alpha)$ with the following functional equation
\begin{multline}\label{funceq Lerch}
\sum_{\substack{l\geq 1\\ l\equiv d\pmod{q}}}\frac{1}{l^{\alpha}}=\frac{1}{q^{\alpha}}\sum_{l=1}^{\infty}\frac{1}{(l+d/q)^{\alpha}}=\frac{\zeta(d/q,0;\alpha)}{q^{\alpha}}=\\
\frac{\Gamma(1-\alpha)}{q^{\alpha}(2\pi)^{1-\alpha}}\left[-ie(\alpha/4)\sum_{l=1}^{\infty}\frac{e(ld/q)}{l^{1-\alpha}}+ie(-\alpha/4)\sum_{l=1}^{\infty}\frac{e(-ld/q)}{l^{1-\alpha}}\right].
\end{multline}
In order to apply this functional equation, we move the contour of integration in \eqref{eq:sumwithomega2} to $\Re{\alpha}<0$, crossing a simple pole of $\zeta(d/q,0;\alpha)$ at $\alpha=1$.
The contribution of this pole is equal to
\begin{equation}\label{eq:poleContr}
\widehat{\omega}(1)\zeta(2s)\sum_{q=1}^{\infty}\frac{1}{q^{2+s}}\sum_{c,d\pmod{q}}S(d^2,c^2;q)e\left(\frac{nc}{q} \right).
\end{equation}
Next, applying the functional equation \eqref{funceq Lerch} and using the fact that
\begin{equation}
ie(-\alpha/4)-ie(\alpha/4)=2\sin\left(\frac{\pi \alpha}{2}\right),
\end{equation}
we find that \eqref{eq:sumwithomega2}  is equal to \eqref{eq:poleContr} plus
\begin{multline}\label{eq:nonpoleContr}
\frac{\zeta(2s)}{2\pi i}\int_{(a)}\widehat{\omega}(\alpha)\sum_{q=1}^{\infty}\frac{1}{q^{1+s+\alpha}}\sum_{l=1}^{\infty}\frac{1}{l^{1-\alpha}}\\ \times \sum_{c,d\pmod{q}}S(d^2,c^2;q)e\left( \frac{nc+ld}{q}\right)\frac{2\Gamma(1-\alpha)\sin(\pi \alpha/2)}{(2\pi)^{1-\alpha}}
d\alpha, \quad a<0.
\end{multline}

It follows from \cite[Eqs. 5.5.5, 5.5.3]{HMF} that
\begin{equation}
\Gamma(1-\alpha)\sin(\pi \alpha/2)=\pi^{1/2}2^{-\alpha}\frac{\Gamma(1/2-\alpha/2)}{\Gamma(\alpha/2)}.
\end{equation}
Substituting this into \eqref{eq:nonpoleContr}, we prove the lemma.
\end{proof}

\subsection{The inner sum}
In order to evaluate the inner sum in \eqref{eq:mainform} we express it as a sum of a product of two Gauss sums:
\begin{multline}\label{eq:ksum}
K(n,l;q):=\sum_{c,d\pmod{q}}S(c^2,d^2,q)e\left( \frac{nc+ld}{q}\right)\\=\sum_{\substack{a,b\pmod{q}\\ab\equiv 1\pmod{q}}}\sum_{c,d\pmod{q}}e\left(\frac{ac^2+db^2}{q} \right)
e\left(\frac{nc+ld}{q} \right)\\=\sum_{\substack{a,b\pmod{q}\\ab\equiv 1\pmod{q}}}G(a,n;q)G(b,l;q).
\end{multline}

It is required to examine different cases depending on the even-odd parity of the parameters $n$, $l$, and $q$. To this end, we use the results of Section \ref{section:gauss}. 

\begin{lem}\label{lem:supqisodd}
Suppose that $q$ is odd. Then 
\begin{equation}
K(n,l;q)=q\chi_4(q)\sum_{\substack{a,b\pmod{q}\\ab\equiv1\pmod{q}}}e\left(-\frac{\overline{4}_q bn^2+\overline{4}_q al^2}{q} \right).
\end{equation}
\end{lem}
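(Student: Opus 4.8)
The plan is to start from the representation of $K(n,l;q)$ as a double sum over $a, b$ with $ab \equiv 1 \pmod q$ of the product $G(a,n;q)G(b,l;q)$, established in \eqref{eq:ksum}, and then to evaluate each quadratic Gauss sum using the case-by-case formulas collected in Section \ref{section:gauss}. Since the hypothesis here is that $q$ is odd (with no assumption on the parity of $n$ or $l$), the relevant inputs are Lemma with \eqref{eq:gfornevenqodd} (for even first argument) and \eqref{eq:nqodd} (for odd first argument). First I would observe that in both parities the same shape holds: $G(a,n;q) = e\left(-\overline{(4a)}_q n^2/q\right) G(a;q)$, because \eqref{eq:gfornevenqodd} states this directly when $n$ is even, and \eqref{eq:nqodd} states exactly this when $n$ is odd. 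So regardless of the parity of $n$ and $l$, one has the uniform factorization
\begin{equation*}
G(a,n;q)G(b,l;q) = e\left(-\frac{\overline{(4a)}_q n^2 + \overline{(4b)}_q l^2}{q}\right) G(a;q)G(b;q).
\end{equation*}

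Next I would handle the product $G(a;q)G(b;q)$ with $ab \equiv 1 \pmod q$. The cleanest route is to note that, since $q$ is odd and $(a,q)=1$, a change of variable $x \mapsto a x$ (or directly the standard quadratic Gauss sum evaluation) gives $G(a;q) = \left(\frac{a}{q}\right) G(1;q)$ up to the usual sign, hence $G(a;q)G(b;q) = \left(\frac{ab}{q}\right) G(1;q)^2 = G(1;q)^2$ using $ab\equiv 1$; and $G(1;q)^2 = G^2(a;q)$ for any $(a,q)=1$ equals $q\chi_4(q)$ by Lemma \ref{lem:g^2}. Alternatively, and perhaps more in the spirit of the paper, one can write $G(a;q)G(b;q) = \left(\left(\frac{a}{q}\right)\left(\frac{b}{q}\right)\right) G^2(1;q)$ or simply invoke that the Jacobi-symbol factors multiply to $\left(\frac{ab}{q}\right) = \left(\frac{1}{q}\right) = 1$, so the product equals $G^2(a;q) = q\chi_4(q)$ directly. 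Either way, this constant pulls out of the double sum.

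Then I would simplify the phase. With $ab \equiv 1 \pmod q$ we have $\overline{(4a)}_q \equiv \overline{4}_q \, \overline{a}_q \equiv \overline{4}_q \, b \pmod q$ and likewise $\overline{(4b)}_q \equiv \overline{4}_q \, a \pmod q$, so the exponential becomes $e\left(-\frac{\overline{4}_q b n^2 + \overline{4}_q a l^2}{q}\right)$, which is precisely the summand in the claimed identity. Collecting the constant $q\chi_4(q)$ in front of the sum over pairs $(a,b)$ with $ab\equiv 1 \pmod q$ yields the statement. The main obstacle is really just bookkeeping: making sure the two parity sub-cases of the Gauss-sum phase formula (even $n$ via \eqref{eq:gfornevenqodd}, odd $n$ via \eqref{eq:nqodd}) genuinely produce the same expression $e(-\overline{(4a)}_q n^2/q)$ — which they do, since \eqref{eq:gfornevenqodd} explicitly records both forms and \eqref{eq:nqodd} is stated in the $\overline{(4a)}_q$ form — and then correctly converting $\overline{(4a)}_q$ into $\overline{4}_q b$ modulo $q$ using $ab \equiv 1$. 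There is no analytic difficulty here; the lemma is a purely algebraic consolidation step, and the only place to be careful is that all the inverses $\overline{4}_q$, $\overline{a}_q$, $\overline{b}_q$ make sense precisely because $q$ is odd and $(ab,q)=1$.
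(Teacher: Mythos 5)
Your proposal is correct and follows essentially the same route as the paper: expand $K(n,l;q)$ via \eqref{eq:ksum}, evaluate each quadratic Gauss sum through \eqref{eq:gfornevenqodd} or \eqref{eq:nqodd} (which give the identical phase $e(-\overline{(4a)}_q n^2/q)$ in both parities), and reduce $G(a;q)G(b;q)$ to $q\chi_4(q)$ via Lemma \ref{lem:g^2} using $ab\equiv 1\pmod q$. The paper phrases the last step as ``$G(a;q)=G(b;q)$ since $ab\equiv1\pmod q$'' rather than via the Jacobi symbol, but this is the same observation.
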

\begin{proof}
There are four different cases to consider depending on the parity of $n$ and $l$.
Assume first that $n$ and $l$ are even.
Using \eqref{eq:ksum} and \eqref{eq:gfornevenqodd}, we have
\begin{equation}
K(n,l;q)=\sum_{\substack{a,b\pmod{q}\\ab\equiv1\pmod{q}}}e\left(-\frac{\overline{4}_q bn^2+\overline{4}_q al^2}{q} \right)G(a;q)G(b;q).
\end{equation}
Note that $G(a;q)=G(b;q)$ since $ab\equiv 1\pmod{q}$. Then Lemma \ref{lem:g^2} implies the statement when $n$ and $l$ are even.
Other three cases can be treated similarly by evaluating $G(a,n;q)$ using  \eqref{eq:gfornevenqodd} if $n$ is even and \eqref{eq:nqodd} if $n$ is odd.
\end{proof}

\begin{lem}\label{lem:supqiseven}
Suppose that $q$ is even and $n+l$ is odd. Then 
\begin{equation}
K(n,l;q)=0.
\end{equation}
\end{lem}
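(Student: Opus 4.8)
The claim is that $K(n,l;q) = 0$ whenever $q$ is even and $n+l$ is odd. Since $n+l$ is odd, exactly one of $n$, $l$ is even and the other is odd; by the symmetry $K(n,l;q) = K(l,n;q)$ (visible from the defining formula \eqref{eq:ksum}, swapping the roles of $(a,c)$ and $(b,d)$), we may assume without loss of generality that $n$ is even and $l$ is odd. The plan is to exploit the factorization
\begin{equation*}
K(n,l;q)=\sum_{\substack{a,b\pmod{q}\\ab\equiv 1\pmod{q}}}G(a,n;q)G(b,l;q)
\end{equation*}
from \eqref{eq:ksum} and simply observe that one of the two Gauss-sum factors vanishes identically.

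First I would split on the $2$-adic valuation of $q$. If $q\equiv 0\pmod 4$: since $l$ is odd, Lemma~\ref{lem:nodd_and_qeven} gives $G(b,l;q)=0$ for every $b$ coprime to $q$, hence every term in the sum is zero and $K(n,l;q)=0$. If $q\equiv 2\pmod 4$: now I look at the other factor $G(a,n;q)$ with $n$ even; by Lemma~\ref{lem:neven_and_qeven} (the case $q\equiv 2\pmod 4$), $G(a,n;q)=0$ for every admissible $a$, so again every term vanishes. (One should note that $n=0$ is not an issue here, since in the application $n\geq 1$; and even the degenerate reading is covered because for $q\equiv 2\pmod 4$, $G(a;q)=G(a,0;q)=0$ by Lemma~\ref{lem:g^2}.) Since $q$ even forces $q\equiv 0$ or $q\equiv 2\pmod 4$, these two cases are exhaustive, and the lemma follows.

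There is essentially no obstacle: the content is entirely bookkeeping of which vanishing statement from Section~\ref{section:gauss} to invoke in which residue class of $q$ modulo $4$, together with the parity observation that $n+l$ odd means the even one among $n,l$ can be paired with $q\equiv 2\pmod 4$ and the odd one with $q\equiv 0\pmod 4$. The only point requiring a line of care is making the symmetry reduction explicit — that $K(n,l;q)$ is symmetric in $n$ and $l$ — so that one does not have to write out both sub-cases ``$n$ even, $l$ odd'' and ``$n$ odd, $l$ even'' separately; alternatively one simply handles all four $(n,l,q)$ parity combinations with $n+l$ odd directly, each by the same two-line argument.
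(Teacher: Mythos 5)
Your proposal is correct and follows essentially the same route as the paper: reduce by symmetry to the case $n$ even, $l$ odd, then observe via Lemma \ref{lem:neven_and_qeven} that $G(a,n;q)$ vanishes unless $q\equiv 0\pmod 4$, in which case $G(b,l;q)=0$ by Lemma \ref{lem:nodd_and_qeven}. Your explicit case split on $q\bmod 4$ and the remark justifying the symmetry $K(n,l;q)=K(l,n;q)$ are just slightly more detailed presentations of the identical argument.
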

\begin{proof}
According to \eqref{eq:ksum} we have
\begin{equation}
K(n,l;q)=\sum_{\substack{a,b\pmod{q}\\ab\equiv 1\pmod{q}}}G(a,n;q)G(b,l;q),
\end{equation}
where one of the parameters $n$ and $l$ is even and another one is odd. Without loss of generality, we can assume that  $n$ is even and $l$ is odd.
Then Lemma \ref{lem:neven_and_qeven} implies that $G(a,n;q)$ is nonzero only if $q\equiv0\pmod{4}$, but in that case $G(b,l;q)=0$ by Lemma \ref{lem:nodd_and_qeven}.
\end{proof}

\begin{lem}\label{lem:qnleven}
Suppose that $q$, $n$ and $l$ are even. Then $K(n,l;q)=0$ if $q\equiv 2\pmod{4}$, and otherwise
\begin{equation}\label{eq:alleven}
K(n,l;q)=2iq\sum_{\substack{a,b\pmod{q}\\ab\equiv1\pmod{q}}}\chi_4(a)e\left(-\frac{a(l/2)^2+b(n/2)^2}{q} \right).
\end{equation}
\end{lem}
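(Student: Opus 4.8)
The plan is to follow the same strategy as in Lemma \ref{lem:supqisodd}, reducing $K(n,l;q)$ to a sum over the single variable $a$ (with $b\equiv\overline a_q$) by evaluating each quadratic Gauss sum with the parity formulas from Section \ref{section:gauss}. Since $q$, $n$, $l$ are all even, the relevant tool is Lemma \ref{lem:neven_and_qeven}: if $q\equiv 2\pmod 4$ then $G(a,n;q)=0$ already (and likewise $G(b,l;q)=0$), so every term in \eqref{eq:ksum} vanishes and $K(n,l;q)=0$; this disposes of the first assertion immediately. For the remaining case $q\equiv 0\pmod 4$, formula \eqref{eq:Mal25} gives $G(a,n;q)=e\bigl(-\overline a_q(n/2)^2/q\bigr)G(a;q)$ and $G(b,l;q)=e\bigl(-\overline b_q(l/2)^2/q\bigr)G(b;q)$.

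Substituting these into \eqref{eq:ksum} yields
\begin{equation*}
K(n,l;q)=\sum_{\substack{a,b\pmod q\\ ab\equiv 1\pmod q}}e\left(-\frac{\overline a_q(n/2)^2+\overline b_q(l/2)^2}{q}\right)G(a;q)G(b;q).
\end{equation*}
Because $ab\equiv 1\pmod q$ we may replace $\overline a_q$ by $b$ and $\overline b_q$ by $a$ in the exponential, which turns the argument into $-(b(n/2)^2+a(l/2)^2)/q$, matching the exponential in \eqref{eq:alleven}. It remains to handle the product $G(a;q)G(b;q)$. Writing $b\equiv\overline a_q$, Lemma \ref{lem:g^2} (third case, $q\equiv 0\pmod 4$) gives $G(a;q)^2=2qi\chi_4(a)$; the subtle point is that one needs $G(a;q)G(b;q)$ rather than $G(a;q)^2$, so I would first argue that $G(\overline a_q;q)=\overline{G(a;q)}=\chi_4(a)G(a;q)$ — this follows since $\overline{G(a;q)}=G(-a;q)=G(\overline{a}_q\cdot(-a^2);q)$ and the standard change of variable $x\mapsto \overline a_q x$ in the quadratic Gauss sum, combined with $G(-a;q)=\overline{G(a;q)}$, shows $G(a;q)G(\overline a_q;q)=\chi_4(a)G(a;q)^2$; alternatively one may simply note $|G(a;q)|^2=G(a;q)G(-a;q)=2q$ and track the phase. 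Either way, $G(a;q)G(b;q)=\chi_4(a)\cdot G(a;q)^2=\chi_4(a)\cdot 2qi\chi_4(a)=2qi\chi_4(a)^2=2qi$ when $\chi_4(a)\neq 0$, i.e. when $a$ is odd (which is automatic since $(a,q)=1$ and $q$ is even), so the factor $\chi_4(a)$ that appears in \eqref{eq:alleven} comes precisely from this phase relation rather than cancelling.

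Let me be more careful about where the $\chi_4(a)$ in \eqref{eq:alleven} originates, since that is the crux. The cleanest route: from $ab\equiv 1\pmod q$ and $q\equiv 0\pmod 4$, one has $G(b;q)=G(\overline a_q;q)$, and the substitution $x\mapsto \overline a_q x$ (a bijection mod $q$) gives $G(\overline a_q;q)=\sum_x e(\overline a_q x^2/q)=\sum_x e(\overline a_q\overline a_q^2 x^2/q)$... this is circular, so instead use $G(\overline a_q;q)=\overline{G(a;q)}$, valid because replacing $x$ by $\overline a_q x$ shows $\sum_x e(ax^2/q)=\sum_x e(a\overline a_q^2 x^2/q)=G(\overline a_q;q)$ when $a\overline a_q^2\equiv \overline a_q$, hence $G(a;q)=G(\overline a_q;q)$ directly — so actually $G(a;q)G(b;q)=G(a;q)^2=2qi\chi_4(a)$ by Lemma \ref{lem:g^2}, and this $\chi_4(a)$ is exactly the one in \eqref{eq:alleven}. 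Thus
\begin{equation*}
K(n,l;q)=\sum_{\substack{a,b\pmod q\\ ab\equiv 1\pmod q}}e\left(-\frac{a(l/2)^2+b(n/2)^2}{q}\right)2qi\chi_4(a)=2iq\sum_{\substack{a,b\pmod q\\ ab\equiv 1\pmod q}}\chi_4(a)e\left(-\frac{a(l/2)^2+b(n/2)^2}{q}\right),
\end{equation*}
which is \eqref{eq:alleven}. The main obstacle is pinning down the identity $G(a;q)=G(\overline a_q;q)$ for $q\equiv 0\pmod 4$ and confirming it produces the stated factor $\chi_4(a)$ with the correct sign of $i$; everything else is a routine substitution of the parity formulas and a relabelling of the exponential via $ab\equiv 1\pmod q$. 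I would verify the phase on a small example such as $q=4$ to make sure no stray sign is lost.
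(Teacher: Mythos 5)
Your proof is correct and follows the same route as the paper: reduce via Lemma \ref{lem:neven_and_qeven} to $G(a,n;q)=e\bigl(-\overline{a}_q(n/2)^2/q\bigr)G(a;q)$, use $ab\equiv 1\pmod q$ to rewrite the exponential, and evaluate $G(a;q)G(b;q)=G(a;q)^2=2qi\chi_4(a)$ by the substitution $x\mapsto\overline{a}_qx$ together with Lemma \ref{lem:g^2}. The only caution is that your first paragraph's detour through $G(\overline{a}_q;q)=\overline{G(a;q)}$ is false (it would assert $G(-a;q)=G(a;q)$, contradicting $G(\pm a;q)^2=2qi^{\pm a}$) and yields the wrong constant $2qi$ without the factor $\chi_4(a)$; the corrected argument in your second paragraph, namely $G(a;q)=G(\overline{a}_q;q)$ directly, is the right one and is exactly what the paper uses implicitly.
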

\begin{proof}
It follows directly from  \eqref{eq:ksum} and Lemma \ref{lem:neven_and_qeven} that $K(n,l;q)\neq 0$ only if $q\equiv 0\pmod{4}$.
In that case, \eqref{eq:alleven} is a consequence of Lemmas \ref{lem:neven_and_qeven} and \ref{lem:g^2}.
\end{proof}

\begin{lem}\label{lem:qevennlodd}
Suppose that $q$ is even and $n$, $l$ are odd. Then $K(n,l;q)=0$ if  $q\equiv 0\pmod{4}$.   If $q\equiv 2\pmod{4}$, then $r:=q/2$ is odd and
\begin{equation}
K(n,l;q)=2q\chi_4(r)S\left(\overline{(8)}_r n^2,\overline{(8)}_r l^2;r\right).
\end{equation}
\end{lem}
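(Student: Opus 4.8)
The plan is to follow the same template used in Lemmas \ref{lem:supqisodd}--\ref{lem:qnleven}: starting from the factorization \eqref{eq:ksum}, namely $K(n,l;q)=\sum_{ab\equiv 1(q)}G(a,n;q)G(b,l;q)$, and inserting the explicit evaluations of the quadratic Gauss sums with a linear term from Section \ref{section:gauss}. First I would dispose of the case $q\equiv 0\pmod 4$: here $n$ is odd, so Lemma \ref{lem:nodd_and_qeven} gives $G(a,n;q)=0$ for every $a$ coprime to $q$, hence every term in the sum vanishes and $K(n,l;q)=0$. So assume $q\equiv 2\pmod 4$ and write $q=2r$ with $r$ odd.

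Next I would substitute the formula \eqref{eq:8a} of Lemma \ref{lem:nodd_and_qeven} for both factors: $G(a,n;2r)=2e(-\overline{(8a)}_r n^2/r)G(2a;r)$ and similarly $G(b,l;2r)=2e(-\overline{(8b)}_r l^2/r)G(2b;r)$. Since $\overline{(8a)}_r=\overline{8}_r\overline{a}_r$, the phases combine into $e\bigl(-\overline{8}_r(\overline{a}_r n^2+\overline{b}_r l^2)/r\bigr)$, and the product of the Gauss-sum factors is $4\,G(2a;r)G(2b;r)$. Because $ab\equiv 1\pmod q$ forces $ab\equiv 1\pmod r$, we have $(2a)(2b)\equiv 4\pmod r$; I would like to replace $G(2a;r)G(2b;r)$ by something of the form $r\chi_4(r)$ times a character in $a$. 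Using Lemma \ref{lem:g^2} for odd modulus, $G(2a;r)^2=r\chi_4(r)$, and multiplicativity-type relations for quadratic Gauss sums give $G(2a;r)G(2b;r)=G(2a;r)^2\cdot\bigl(\text{quadratic symbol in }ab\bigr)$; since $ab\equiv 1$, that symbol is trivial, so $G(2a;r)G(2b;r)=r\chi_4(r)$ exactly (here one uses that $\bigl(\tfrac{\cdot}{r}\bigr)$ is real, so $G(2b;r)=\bigl(\tfrac{2b}{r}\bigr)\bigl(\tfrac{2a}{r}\bigr)G(2a;r)$ and $\bigl(\tfrac{2b}{r}\bigr)\bigl(\tfrac{2a}{r}\bigr)=\bigl(\tfrac{4ab}{r}\bigr)=1$). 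Collecting the constants $4\cdot r\chi_4(r)=2q\chi_4(r)$, I obtain
\begin{equation*}
K(n,l;q)=2q\chi_4(r)\sum_{\substack{a,b\pmod r\\ ab\equiv 1\pmod r}}e\left(-\frac{\overline{8}_r(\overline{a}_r n^2+\overline{b}_r l^2)}{r}\right),
\end{equation*}
where I have also checked that the residues $a,b$ modulo $q=2r$ with $ab\equiv 1\pmod q$ are in bijection with residues modulo $r$ with $ab\equiv 1\pmod r$ (the factor $2$ from the extra modulus is absorbed, or one simply notes that only the residues mod $r$ enter the summand), so the sum is over pairs modulo $r$.

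Finally I would recognize the remaining exponential sum as a Kloosterman sum. Writing $b\equiv \overline a_r$, the inner sum is $\sum_{(a,r)=1}e\bigl(-\overline{8}_r(\overline a_r n^2+a\,l^2)/r\bigr)$; substituting $a\mapsto \overline{a}_r$ (a bijection on units mod $r$) turns this into $\sum_{(a,r)=1}e\bigl((\overline{8}_r a\, n^2+\overline{(8a)}_r l^2)\cdot(-1)/r\bigr)$, and a further change of variable $a\mapsto -\overline{8}_r a$ (legitimate since $(8,r)=1$) matches it with $S\bigl(\overline{(8)}_r n^2,\overline{(8)}_r l^2;r\bigr)=\sum_{(a,r)=1}e\bigl((\overline{(8)}_r n^2 a+\overline{(8)}_r l^2\overline a_r)/r\bigr)$, using that $S(m,n;r)=S(m,-n;-r)=S(-m,-n;r)$ and that $S$ is symmetric in its first two arguments. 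I would carry out the bookkeeping of signs and inverses carefully to land exactly on $S(\overline{(8)}_r n^2,\overline{(8)}_r l^2;r)$. The main obstacle is precisely this last step: keeping track of the several inversions and the sign $-\overline 8_r$ in the exponent so that the normalization of the Kloosterman sum comes out in the stated form rather than a twisted or sign-flipped variant; the Gauss-sum product identity $G(2a;r)G(2b;r)=r\chi_4(r)$ and the reduction of the modulus from $2r$ to $r$ are routine once one invokes Lemmas \ref{lem:g^2} and \ref{lem:nodd_and_qeven}.
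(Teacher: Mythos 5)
Your proof is correct and follows essentially the same route as the paper's: dispose of $q\equiv 0\pmod 4$ via Lemma \ref{lem:nodd_and_qeven}, insert \eqref{eq:8a} into \eqref{eq:ksum}, use Lemma \ref{lem:g^2} (together with the observation that $\left(\frac{2a}{r}\right)\left(\frac{2b}{r}\right)=\left(\frac{4ab}{r}\right)=1$, which correctly fills in the step the paper leaves implicit) to extract the factor $2q\chi_4(r)$, and then recognize the remaining exponential sum as the stated Kloosterman sum. The only divergence is the final modulus reduction: the paper first writes the sum as $S(2\overline{8}_r n^2,2\overline{8}_r l^2;2r)$ and invokes twisted multiplicativity of Kloosterman sums to peel off a trivial modulus-$2$ factor, whereas you pass from units mod $2r$ to units mod $r$ directly and then perform the change of variables $a\mapsto -a$; both are routine and land on the same answer.
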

\begin{proof}
If  $q\equiv 0\pmod{4}$, then \eqref{eq:ksum} and Lemma \ref{lem:nodd_and_qeven} imply that $K(n,l;q)=0$.
In the opposite case, it follows from \eqref{eq:8a} and Lemma \ref{lem:g^2} that
\begin{equation}
K(n,l;q)=2q\chi_4(r)\sum_{\substack{a,b\pmod{q}\\ab\equiv1\pmod{q}}}e\left(-\frac{\overline{(8a)}_r n^2+\overline{(8b)}_r l^2}{r} \right),
\end{equation}
where  $r:=q/2$ is odd.
This is equal to
\begin{equation}
K(n,l;q)=2q\chi_4(r)S(2\overline{(8)}_r n^2,2\overline{(8)}_r l^2;q).
\end{equation}
Writing $q=2r$ and applying the multiplicity property of Kloosterman sums, we infer
\begin{equation*}
K(n,l;q)=2q\chi_4(r)S\left(\overline{(8)}_r n^2,\overline{(8)}_r l^2;r\right)S\left(2\overline{(8)}_r \overline{(r)}_2 n^2,2\overline{(8)}_r \overline{(r)}_2 l^2;2\right).
\end{equation*}
Then the assertion follows by using the fact that
$$S\left(2\overline{(8)}_r \overline{(r)}_2 n^2,2\overline{(8)}_r \overline{(r)}_2 l^2;2\right)=1.$$
\end{proof}

%If $n$ is even, then
%\begin{multline}
%\zeta(2s)\pi^{1/2-s}\sum_{l\equiv 0 \pmod{2}}\frac{1}{l^s}\sum_{q\equiv 0\pmod{4}}\frac{2i}{q}f\left(\frac{4\pi nl}{q}\right)\\
%\times
%\sum_{\substack{a,b\pmod{q}\\ab\equiv 1\pmod{q}}}\chi_4(q)e\left(-\frac{a(l/2)^2+b(n/2)^2}{q} \right)\\+\zeta(2s)\pi^{1/2-s}\sum_{l=1}^{\infty}\frac{1}{l^s}\sum_{q}\frac{\chi_4(q)}{q}f\left(\frac{4\pi nl}{q}\right) \sum_{\substack{a,b\pmod{q}\\ab\equiv 1\pmod{q}}}e\left(\frac{\overline{4}_qbn^2+\overline{4}_qal^2}{q} \right).
%\end{multline}

%If $n$ is odd, then
%\begin{multline}
%\zeta(2s)\pi^{1/2-s}\sum_{(l,2)=1}\frac{1}{l^s}\sum_{(r,2)=1}\frac{\chi_4(r)}{r}f\left(\frac{4\pi nl}{2r}\right)S(\overline{8}_rn^2,\overline{8}_rl^2;r)\\
%+\zeta(2s)\pi^{1/2-s}\sum_{l=1}^{\infty}\frac{1}{l^s}\sum_{q}\frac{\chi_4(q)}{q}f\left(\frac{4\pi nl}{q}\right) \sum_{\substack{a,b\pmod{q}\\ab\equiv 1\pmod{q}}}e\left(\frac{\overline{4}_qbn^2+\overline{4}_qal^2}{q} \right).
%\end{multline}

\subsection{The diagonal term}
Now we are ready to evaluate the diagonal main term in \eqref{eq:mainform}, namely
\begin{equation}\label{eq:md}
M^D(n,s):=\widehat{\omega}(1)\zeta(2s)\sum_{q=1}^{\infty}\frac{1}{q^{2+s}}K(n,0;q).
\end{equation}

We use the subscripts "even" and "odd" in $M^{D}_{\text{even}}(n,s)$ and $M^{D}_{\text{odd}}(n,s)$ to mark the parity of $n$.
\begin{lem}\label{lem:diagonal}
If $n$ is even, then
\begin{equation}\label{eq:meven}
M^{D}_{\text{even}}(n,s)=\frac{\widehat{\omega}(1)\zeta(2s)}{L(\chi_4,1+s)}\left(n^{-2s}\sigma_s\left(\chi_4;n^2\right)+\sigma_{-s}(\chi_4;n^2)\right).
\end{equation}
If $n$ is odd, then
\begin{equation}\label{eq:mdodd}
M^{D}_{\text{odd}}(n,s)=\frac{\widehat{\omega}(1)\zeta(2s)}{L(\chi_4,1+s)}\sigma_{-s}(\chi_4;n^2).
\end{equation}
\end{lem}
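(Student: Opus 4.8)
\textbf{Proof proposal for Lemma \ref{lem:diagonal}.}

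The plan is to evaluate $M^D(n,s)$ directly from the definition \eqref{eq:md}, separating the odd and even values of $q$ using the Gauss-sum computations of the previous subsection. First I would note that $K(n,0;q)$ is governed by Lemmas \ref{lem:supqisodd}, \ref{lem:qnleven} and \ref{lem:qevennlodd} with $l=0$ (which is even), so the sum over $q$ splits according to $q$ odd, $q\equiv 2\Mod 4$, and $q\equiv 0\Mod 4$. By Lemma \ref{lem:supqiseven} and Lemma \ref{lem:qevennlodd} the terms with $q\equiv 2\Mod 4$ vanish when $n$ is even (and in fact also the $q\equiv 0\Mod 4$ contribution vanishes when $n$ is odd by Lemma \ref{lem:qevennlodd}), so the structure of the answer is already visible at this stage: for $n$ even one gets a contribution from odd $q$ and from $q\equiv 0\Mod 4$, while for $n$ odd only the odd-$q$ part survives, which explains why $M^{D}_{\text{odd}}$ has only the single term $\sigma_{-s}(\chi_4;n^2)$.

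For the odd-$q$ part, Lemma \ref{lem:supqisodd} with $l=0$ gives
\[
K(n,0;q)=q\chi_4(q)\sum_{\substack{a,b\pmod q\\ ab\equiv 1\pmod q}}e\!\left(-\frac{\overline 4_q\, b n^2}{q}\right)
= q\chi_4(q)\sum_{\substack{b\pmod q\\ (b,q)=1}} e\!\left(-\frac{\overline 4_q\, b n^2}{q}\right),
\]
which is a Ramanujan-type sum; since $(\,\overline4_q,q)=1$ it equals $q\chi_4(q)\, c_q(n^2)$ where $c_q$ is the Ramanujan sum, and summing $q^{-2-s}q\chi_4(q)c_q(n^2)$ over odd $q$ produces, via the standard identity $\sum_q c_q(m)\chi_4(q)q^{-w}= \sigma_{1-w}(\chi_4;m)/L(\chi_4,w)$ (valid here because $\chi_4$ is supported on odd integers, so the restriction to odd $q$ is automatic), the term $\widehat\omega(1)\zeta(2s)\,\sigma_{-s}(\chi_4;n^2)/L(\chi_4,1+s)$. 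I would take $\Re s$ large enough throughout so that all interchanges of summation and all Dirichlet series converge absolutely; meromorphic continuation to $0<\Re s<1$ is then automatic from the closed form. For $n$ odd this already is the whole answer, giving \eqref{eq:mdodd}.

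For $n$ even there is the extra contribution from $q\equiv 0\Mod 4$. Writing $q=4q'$ and applying Lemma \ref{lem:qnleven} with $l=0$, one has $K(n,0;q)=2iq\sum_{ab\equiv1}\chi_4(a)e(-b(n/2)^2/q)$; carrying out the $a$-sum is harmless (it only records $(b,q)=1$ together with $\chi_4$ of a unit, which I will handle by twisted multiplicativity / CRT separating the power of $2$ from the odd part of $q$), and the resulting Dirichlet series over such $q$ can be recognised, after using \eqref{eq:sigmachi4} to replace $(n/2)^2$ by $n^2$ inside $\sigma$, as a constant multiple of $Z(\cdot,\cdot)$ from Lemma \ref{eq:zs} or, more efficiently, directly as $n^{-2s}\sigma_s(\chi_4;n^2)/L(\chi_4,1+s)$ times $\widehat\omega(1)\zeta(2s)$; the functional-equation relation $\sigma_{1-w}(\chi_4;n^2)=n^{2(1-w)-1}\cdot n\cdot\sigma_{w-1}(\chi_4;n^2)$ for odd $n$ (Lemma, eq. \eqref{eq:sigmatwist}, suitably rescaled) is what converts the raw output of the $q\equiv0\Mod4$ sum into the form $n^{-2s}\sigma_s$. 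Adding the two pieces gives \eqref{eq:meven}. The main obstacle I anticipate is the bookkeeping in the $q\equiv0\Mod4$ case: correctly tracking the factor $2i\chi_4(a)$ against the $l=0$ Gauss-sum normalisation, splitting the modulus via CRT so that $\chi_4$ sees only the odd part, and matching the $2$-Euler factors so that they combine with $\zeta(2s)$ and $L(\chi_4,1+s)$ to leave exactly the stated closed form — a sign or a power of $2$ is easy to misplace here, and checking consistency at a special value of $n$ (e.g. $n=2$) is a good safeguard.
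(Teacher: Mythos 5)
Your proposal follows essentially the same route as the paper: split the $q$-sum by parity, reduce the odd-$q$ part to a Ramanujan sum via Lemma \ref{lem:supqisodd} (with the twisted Dirichlet-series identity giving $\sigma_{-s}(\chi_4;n^2)/L(\chi_4,1+s)$), and reduce the $q\equiv 0\pmod 4$ part to the Gauss sum $g(\chi_4;q;(n/2)^2)$ via Lemma \ref{lem:qnleven} and \eqref{eq:miyake2}, exactly as in the paper's proof. Two small corrections to your citations: with $l=0$ the vanishing of the even-$q$ terms comes from Lemma \ref{lem:qnleven} (for $n$ even, $q\equiv 2\pmod 4$) and Lemma \ref{lem:supqiseven} (for $n$ odd, all even $q$), not Lemma \ref{lem:qevennlodd} (which needs $l$ odd); and the conversion of the $q\equiv 0\pmod 4$ output into $n^{-2s}\sigma_s(\chi_4;n^2)$ requires only the divisor reversal $\sum_{d\mid m}\chi_4(m/d)d^{-s}=m^{-s}\sigma_s(\chi_4;m)$ together with \eqref{eq:sigmachi4}, not \eqref{eq:sigmatwist}, which is proved only for odd $n$ and would not apply here.
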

\begin{proof}
Assume first that $n$ is even. We can split the the sum over $q$ in \eqref{eq:md} as follows
\begin{equation}\label{sum:splitoddeven}
\sum_{q=1}^{\infty}=\sum_{q\equiv 0\pmod{2}}+\sum_{q\equiv 1\pmod{2}}.
\end{equation}
For $n$ and $q$ even, we have $K(n,0;q)=0$  unless $q\equiv 0\pmod{4}$. If $q\equiv 0\pmod{4}$  the following identity holds $\chi_4(a)=\chi_4(b)$ for $ab\equiv 1\pmod{q}$. 
Then by Lemma \ref{lem:qnleven}
\begin{equation}
\frac{K(n,0;q)}{2iq}=\sum_{b\pmod{q}}^{*}\chi_4(b)e\left(-\frac{b(n/2)^2}{q} \right)=\chi_4(-1)g\left(\chi_4;q;(n/2)^2\right),
\end{equation}
where the star over the sum above means that we are summing over $b\pmod{q}$ such that $(b,q)=1$.
Consequently, using \eqref{eq:miyake2} we find that
\begin{multline}
\sum_{q \equiv 0\pmod{2}}\frac{K(n,0;q)}{q^{2+s}}=2i\chi_4(-1)\sum_{q\equiv 0\pmod{4}}\frac{g(\chi_4;q;(n/2)^2)}{q^{1+s}}\\
=\frac{2i\chi_4(-1)}{4^{1+s}}\tau(\chi_4)\sum_{q=1}^{\infty}\frac{1}{q^{1+s}}\sum_{d|((n/2)^2,q)}d\chi_4\left(\frac{q}{d}\right)\chi_4\left(\frac{(n/2)^2}{d}\right)\mu\left(\frac{q}{d}\right).
\end{multline}

Computing the sum over $q$, we obtain
\begin{multline}\label{case:even}
\sum_{q \equiv 0\pmod{2}}\frac{K(n,0;q)}{q^{2+s}}=\frac{2i\chi_4(-1)\tau(\chi_4)}{4^{1+s}L(\chi_4,1+s)}\sum_{d|(n/2)^2}d^{-s}\chi_4\left(\frac{(n/2)^2}{d}\right)\\
=\frac{2i\chi_4(-1)\tau(\chi_4)}{4^{1+s}L(\chi_4,1+s)}\left(\frac{n}{2} \right)^{-2s}\sigma_s\left(\chi_4;(n/2)^2\right)=\frac{\sigma_s\left(\chi_4;(n/2)^2\right)}{L(\chi_4,1+s)}n^{-2s}.
\end{multline}

Now consider the sum over odd $q$ in \eqref{sum:splitoddeven}.
Applying Lemma \ref{lem:supqisodd} to compute $K(n,0;q)$  and making the change of variables $-\overline{4}_qb\rightarrow b$, we evaluate the second sum
\begin{multline}\label{case:odd}
\sum_{q \equiv 1\pmod{2}}\frac{K(n,0;q)}{q^{2+s}}=\sum_{q=1}^{\infty}\frac{\chi_4(q)}{q^{1+s}}\sum_{b\pmod{q}}e\left(\frac{bn^2}{q} \right)\\=\sum_{q=1}^{\infty}\frac{\chi_4(q)}{q^{1+s}}\sum_{d|(q,n^2)}d\mu\left(\frac{q}{d} \right)\\
=\sum_{d|n^2}d\sum_{q\equiv 0\pmod{d}}\frac{\chi_4(q)\mu(q/d)}{q^{1+s}}=\frac{\sigma_{-s}(\chi_4;n^2)}{L(\chi_4,1+s)}.
\end{multline}

Combining \eqref{case:odd} and \eqref{case:even} and using the fact that  $\sigma_s\left(\chi_4;(n/2)^2\right)=\sigma_s(\chi_4;n^2)$,  we prove \eqref{eq:meven}.

Similarly, for odd $n$ the identity \eqref{eq:mdodd} follows from
\begin{equation}\label{case:oddodd}
\sum_{q=1}^{\infty}\frac{K(n,0;q)}{q^{2+s}}=\frac{\sigma_{-s}(\chi_4;n^2)}{L(\chi_4,1+s)}.
\end{equation}
\end{proof}

\subsection{The non-diagonal term}
In this subsection, we study the non-diagonal term in \eqref{eq:mainform}, namely
\begin{equation}
M^{ND}:=\frac{\zeta(2s)}{\pi^{s-1/2}}\sum_{l=1}^{\infty}\frac{1}{l^s}\sum_{q=1}^{\infty}\frac{f\left( \omega,s;4\pi n l/q\right)}{q^{2}}
K(n,l;q).
\end{equation}
For simplicity, let $\psi(x):=f\left( \omega,s;4x\right)$. 
Consider the following two Kloosterman sums (see, for example, \cite[Eqs. (2.20), (2.23)]{KY}) :
\begin{equation}\label{eq:sinf0}
S_{\infty,0}(m,n;c\sqrt{N};\chi)=\overline{\chi}(c)S(\overline{N}m,n;c), \quad (c,N)=1,
\end{equation}
\begin{equation}
S_{\infty,\infty}(m,n;c;\chi)=\sum_{ab\equiv 1\pmod{c}}e\left(\frac{am+bn}{c} \right)\overline{\chi}(b).
\end{equation}

Using \eqref{eq:allowedmoduli} and \cite[Eq. 2.15]{KY} we find that
\begin{equation}
C_{\infty,\infty}(4)=\{\gamma=q> 0, \quad q\equiv 0\pmod{4}\},
\end{equation}
\begin{equation}
C_{\infty,0}(4)=\{ \gamma=2q> 0, (q,4)=1\},
\end{equation}
\begin{equation}
C_{\infty,0}(16)=\{\gamma=4q> 0,\quad (q,2)=1\},
\end{equation}
\begin{equation}
C_{\infty,0}(64)=\{\gamma=8s> 0,\quad (s,2)=1\}.
\end{equation}

\begin{lem}
If $n$ is even (let $n_1:=n/2$), then
\begin{multline}\label{eq:mndeven}
M^{ND}=-\frac{2i\zeta(2s)}{2^s\pi^{s-1/2}}\sum_{l=1}^{\infty}\frac{1}{l^s}\sum_{\gamma \in C_{\infty, \infty}(4)}\frac{1}{\gamma}\psi\left(\frac{4\pi ln_1}{\gamma} \right)S_{\infty\infty}\left(l^2,n_{1}^{2};\gamma;\chi_4\right)\\
+\frac{2\zeta(2s)}{\pi^{s-1/2}}\sum_{l=1}^{\infty}\frac{1}{l^s}\sum_{\gamma\in C_{\infty, 0}(4)}\frac{1}{\gamma}\psi\left(\frac{4\pi ln_1}{\gamma} \right)S_{\infty 0}(l^2,n_{1}^{2};\gamma;\chi_{4}).
\end{multline}
If $n$ is odd, then
\begin{multline}\label{eq:mndodd}
M^{ND}=\frac{8\zeta(2s)}{\pi^{s-1/2}}\sum_{l=1}^{\infty}\frac{1}{l^s}\sum_{\gamma\in C_{\infty, 0}(64)}\frac{1}{\gamma}\psi\left(\frac{4\pi ln}{\gamma} \right)S_{\infty 0}(l^2,n^2;\gamma;\chi_{4})\\
+(1-2^{-s})\frac{4\zeta(2s)}{\pi^{s-1/2}}\sum_{l=1}^{\infty}\frac{1}{l^s}\sum_{\gamma\in C_{\infty, 0}(16)}\frac{1}{\gamma}\psi\left(\frac{4\pi ln}{\gamma} \right)S_{\infty 0}(l^2,n^2;\gamma;\chi_{4}).
\end{multline}
\end{lem}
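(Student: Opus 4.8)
The plan is to substitute into the definition of $M^{ND}$ the evaluations of $K(n,l;q)$ obtained in Lemmas \ref{lem:supqisodd}--\ref{lem:qevennlodd}, after splitting the $q$-sum according to the residue of $q$ modulo $4$, and then to identify each resulting sum of ordinary Kloosterman sums with a sum of generalized Kloosterman sums $S_{\infty 0}$ or $S_{\infty\infty}$ attached to one of the levels $4,16,64$. Throughout I would use that $\chi_4$ is real with $\chi_4(-1)=-1$ and $\chi_4(q)^2=1$ for odd $q$; the elementary identities $S(m,n;q)=S(n,m;q)$, $S(\alpha m,n;q)=S(m,\alpha n;q)$ and hence $S(\alpha m,\alpha n;q)=S(m,\alpha^2 n;q)$ for $(\alpha,q)=1$; the relations \eqref{eq:sinf0} and the definition of $S_{\infty\infty}$; and $\psi(x)=f(\omega,s;4x)$, so that $f(\omega,s;4\pi nl/q)=\psi(4\pi nl/(4q))$. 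Writing $\sum_{q\ge 1}=\sum_{q\text{ odd}}+\sum_{q\equiv 2(4)}+\sum_{q\equiv 0(4)}$, Lemmas \ref{lem:supqiseven}, \ref{lem:qnleven}, \ref{lem:qevennlodd} show that for $n$ even only the odd-$q$ terms (with all $l$) and the $q\equiv 0\pmod{4}$ terms (with $l$ even) survive, while for $n$ odd only the odd-$q$ terms (with all $l$) and the $q\equiv 2\pmod{4}$ terms (with $l$ odd) survive.

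For the odd-$q$ part I would apply Lemma \ref{lem:supqisodd}: writing $b=\overline{a}_q$ turns the inner double sum into an ordinary Kloosterman sum. When $n=2n_1$ is even, $\overline{4}_q n^2\equiv n_1^2\pmod{q}$ gives $K(n,l;q)=q\chi_4(q)S(\overline{4}_q l^2,n_1^2;q)$, which by \eqref{eq:sinf0} with $N=4$ and $\chi_4(q)^2=1$ equals $q\,S_{\infty 0}(l^2,n_1^2;2q;\chi_4)$; putting $\gamma=2q$ and using the description of $C_{\infty 0}(4)$, this is exactly the $C_{\infty 0}(4)$-term of \eqref{eq:mndeven}. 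When $n$ is odd, the elementary Kloosterman identities give $K(n,l;q)=q\chi_4(q)S(\overline{16}_q n^2,l^2;q)=q\,S_{\infty 0}(l^2,n^2;4q;\chi_4)$ by \eqref{eq:sinf0} with $N=16$; putting $\gamma=4q$, this is a full (all-$l$) sum over $C_{\infty 0}(16)$, which I denote $A$.

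For the $q\equiv 0\pmod{4}$ part with $n$ even I would feed \eqref{eq:alleven} into $M^{ND}$. Since only even $l$ occur, I set $l=2l_1$; using $\chi_4(-1)=-1$ and $\chi_4(a)=\overline{\chi_4}(b)$ when $ab\equiv 1$, the definition of $S_{\infty\infty}$ identifies $\sum_{ab\equiv 1}\chi_4(a)e(-(al_1^2+bn_1^2)/q)$ with $-S_{\infty\infty}(l_1^2,n_1^2;q;\chi_4)$, so that $K(n,2l_1;q)=-2iq\,S_{\infty\infty}(l_1^2,n_1^2;q;\chi_4)$. Taking $\gamma=q\in C_{\infty\infty}(4)$ and using $l^{-s}=2^{-s}l_1^{-s}$ for $l=2l_1$, this contributes precisely the first (the $C_{\infty\infty}(4)$-) term of \eqref{eq:mndeven}, which together with the previous paragraph proves \eqref{eq:mndeven}.

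The $q\equiv 2\pmod{4}$ part with $n$ odd is the delicate case, and I expect the bookkeeping here to be the main obstacle. Then $r:=q/2$ is odd, only odd $l$ contribute, and Lemma \ref{lem:qevennlodd} together with the elementary Kloosterman identities, \eqref{eq:sinf0} for $N=64$ and $\chi_4(r)^2=1$ yields $K(n,l;q)=2q\,S_{\infty 0}(l^2,n^2;8r;\chi_4)$; with $\gamma=8r$ this is a sum over $C_{\infty 0}(64)$ restricted to odd $l$, whereas \eqref{eq:mndodd} is written as a sum over all $l$. To reconcile this I would split off the even-$l$ part of the full $C_{\infty 0}(64)$-sum: for $l=2l_1$, the congruence $4\overline{64}_r\equiv\overline{16}_r\pmod{r}$ gives $S_{\infty 0}((2l_1)^2,n^2;8r;\chi_4)=S_{\infty 0}(l_1^2,n^2;4r;\chi_4)$, and since $8r=2\cdot 4r$ with $4r\in C_{\infty 0}(16)$, after adjusting the argument of $\psi$ and the factor $l^{-s}=2^{-s}l_1^{-s}$ this even-$l$ part equals $2^{-s}A$. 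Consequently the odd-$l$ level-$64$ sum equals the all-$l$ level-$64$ sum minus $2^{-s}A$, so $M^{ND}=A+(\text{odd-}l\text{ level-}64\text{ sum})=(\text{all-}l\text{ level-}64\text{ sum})+(1-2^{-s})A$, which is exactly \eqref{eq:mndodd}. What remains is routine: tracking which parity of $l$ survives in each case, the rescalings of $\gamma$, and the precise normalizations of $S_{\infty 0}$ and $S_{\infty\infty}$ via \eqref{eq:sinf0} and the definition of $S_{\infty\infty}$.
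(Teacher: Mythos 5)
Your proposal is correct and follows essentially the same route as the paper: split the $q$-sum by residue modulo $4$, insert the evaluations of $K(n,l;q)$ from Lemmas \ref{lem:supqisodd}--\ref{lem:qevennlodd}, and match moduli and normalizations with $S_{\infty 0}$, $S_{\infty\infty}$ via \eqref{eq:sinf0} and the sets $C_{\mathfrak{a},\mathfrak{b}}(N)$. In particular, your treatment of the delicate odd-$n$, $q\equiv 2\pmod 4$ case --- writing the odd-$l$ level-$64$ sum as the full sum minus its even-$l$ part and recognizing that part as $2^{-s}$ times the level-$16$ sum --- is exactly the paper's decomposition $\sum_{(l,2)=1}=\sum_{l}-\sum_{l\equiv 0\,(2)}$.
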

\begin{proof}
First, assume that $n$ is even. Then applying Lemmas \ref{lem:supqisodd}, \ref{lem:supqiseven}, \ref{lem:qnleven} we infer
\begin{multline}\label{ND:even}
M^{ND}=
\frac{\zeta(2s)}{\pi^{s-1/2}}\sum_{l\equiv 0\pmod{2}}\frac{1}{l^s}\sum_{q\equiv 0\pmod{4}}\frac{2i}{q}f\left(\frac{4\pi nl}{q} \right)\\ \times \sum_{\substack{a,b\pmod{q}\\ab\equiv 1\pmod{q}}}\chi_4(a)e\left(-\frac{a(l/2)^2+b(n/2)^2}{q} \right)+\\
\frac{\zeta(2s)}{\pi^{s-1/2}}\sum_{l=1}^{\infty}\frac{1}{l^s}\sum_{q=1}^{\infty}\frac{\chi_4(q)}{q}f\left(\frac{4\pi nl }{q} \right)\sum_{\substack{a,b\pmod{q}\\ab\equiv 1\pmod{q}}}e\left( \frac{\overline{4}_q bn^2+\overline{4}_q al^2}{q}\right).
\end{multline}

Since $\chi_4(a)=\chi_4(b)$ for $q\equiv 0\pmod{4}$, the first summand in \eqref{ND:even} is equal to
\begin{equation*}
-\frac{2i\zeta(2s)}{\pi^{s-1/2}}\sum_{l\equiv 0\pmod{2}}\frac{1}{l^s}\sum_{\gamma \in C_{\infty, \infty}(4)}\frac{1}{\gamma}\psi\left(\frac{\pi ln}{\gamma} \right)S_{\infty\infty}\left(\frac{l^2}{4},\frac{n^2}{4};\gamma;\chi_4\right).
\end{equation*}

Using \eqref{eq:sinf0}, we obtain
\begin{equation}
\chi_4(q)\sum_{\substack{a,b\pmod{q}\\ab\equiv 1\pmod{q}}}e\left( \frac{\overline{4}_q bn^2+\overline{4}_q al^2}{q}\right)=S_{\infty 0}\left(l^2,n_{1}^{2};q\sqrt{4};\chi_4\right).
\end{equation}

Therefore, the second summand in \eqref{ND:even} is equal to
\begin{equation*}
\frac{2\zeta(2s)}{\pi^{s-1/2}}\sum_{l=1}^{\infty}\frac{1}{l^s}\sum_{\gamma\in C_{\infty, 0}(4)}\frac{1}{\gamma}\psi\left(\frac{4\pi ln_1}{\gamma} \right)S_{\infty 0}(l^2,n_{1}^{2};\gamma;\chi_{4}).
\end{equation*}

This completes the proof of \eqref{eq:mndeven}.

Now we assume that $n$ is odd. Using Lemmas \ref{lem:supqisodd}, \ref{lem:supqiseven}, \ref{lem:qevennlodd} we obtain
\begin{multline}\label{mnd:odd}
M^{ND}=\frac{\zeta(2s)}{\pi^{s-1/2}}\sum_{(l,2)=1}\frac{1}{l^s}\sum_{q\equiv 2\pmod{4}}\frac{2\chi_4(q/2)}{q}f\left(\frac{4\pi nl}{q} \right)\\
 \times \sum_{\substack{a,b\pmod{r}\\ab\equiv 1\pmod{r}}}e\left(\frac{a\overline{8}_r n^2+b\overline{8}_r l^2}{r} \right)\\
+\frac{\zeta(2s)}{\pi^{s-1/2}}\sum_{l=1}^{\infty}\frac{1}{l^s}\sum_{q=1}^{\infty}\frac{\chi_4(q)}{q}f\left(\frac{4\pi nl}{q} \right)\sum_{\substack{a,b\pmod{q}\\ab\equiv 1\pmod{q}}}e\left( \frac{\overline{4}_q bn^2+\overline{4}_q al^2}{q}\right),
\end{multline}
where $r=q/2$.
Note that $\chi_4$ can be extended to $\chi_{16}$ as follows
\begin{equation}
\chi_{16}(q)=\begin{cases}
\chi_4(q)\quad \text{if }(q,16)=1\\
0\quad \text{otherwise}.
\end{cases}
\end{equation}
Consequently, $\chi_4(q)=\chi_{16}(q)$ for all $(q,2)=1$.
Therefore,  the second summand in \eqref{mnd:odd} is equal to
\begin{equation}
\frac{4\zeta(2s)}{\pi^{s-1/2}}\sum_{l=1}^{\infty}\frac{1}{l^s}\sum_{\gamma\in C_{\infty, 0}(16)}\frac{1}{\gamma}\psi\left(\frac{4\pi ln}{\gamma} \right)S_{\infty 0}(l^2,n^2;\gamma,\chi_{4}).
\end{equation}

In order to evaluate the first summand in \eqref{mnd:odd}, we split the sum over $l$ as 
$$\sum_{(l,2)=1}=\sum_{l=1}^{\infty}-\sum_{l\equiv 0\pmod{2}},$$
which yields
\begin{multline}
\frac{8\zeta(2s)}{\pi^{s-1/2}}\sum_{l=1}^{\infty}\frac{1}{l^s}\sum_{\gamma\in C_{\infty,0}(64)}\frac{1}{\gamma}\psi\left(\frac{4\pi ln}{\gamma} \right)S_{\infty 0}(l^2,n^2;\gamma,\chi_{4})\\-
\frac{4\zeta(2s)}{2^{s}\pi^{s-1/2}}\sum_{l=1}^{\infty}\frac{1}{l^s}\sum_{\gamma\in C_{\infty, 0}(16)}\frac{1}{\gamma}\psi\left(\frac{4\pi ln}{\gamma} \right)S_{\infty 0}(l^2,n^2;\gamma,\chi_{4}).
\end{multline}

This completes the proof.
\end{proof}

%%%%%%%%%%%%%%%%%%%%%%%%%%%%%%%%%%%%%%%%%%%%%%%%%%%%%%%%%%%%%%%%%%%%%%%%%%%%%%%%%%%%%%%
\section{Special functions}\label{sec:special}
In this section, we study the Bessel transforms $\psi_H(k)$ and $\psi_D(t)$ defined by \eqref{eq:psiHk} and  \eqref{eq:psiDt} respectively. 
Our goal is to prove integral representations for these functions in terms of the Gauss hypergeometric function.

First, recall that in our case $\kappa=1$ and $k\equiv \kappa \pmod{2}$. Therefore, we can write $k=2m+1$ for $m \in \N$ and
\begin{equation}\label{eq:psiHk2}
\psi_H(k)=4i^{2m+1}\int_{0}^{\infty}J_{2m}(x)\psi(x)\frac{dx}{x},
\end{equation}
\begin{equation}\label{eq:psiDt2}
\psi_D(t)=\frac{2\pi i t}{\sinh{(\pi t)}}\int_{0}^{\infty}(J_{2it}(x)+J_{-2it}(x))\psi(x)\frac{dx}{x},
\end{equation}
where for $a<0$ (see \eqref{eq:fwsx})
\begin{equation}
\psi(x)=f(\omega,s;4x)=\frac{1}{2\pi i}\int_{(a)}\frac{\Gamma(1/2-\alpha/2)}{\Gamma(\alpha/2)}\widehat{\omega}(\alpha)\left( \frac{x}{n}\right)^{\alpha+s-1}d\alpha.
\end{equation}
The function $\psi(x)$ has another integral representation: 
\begin{equation}\label{eq:anotherreprpsi}
\psi(x)=\frac{2}{\sqrt{\pi}}\left(\frac{x}{n}\right)^s\int_{0}^{\infty}\omega(y)\cos\left( \frac{2xy}{n}\right)dy,
\end{equation}
see \cite[Eq. (1.7)]{BF2} and \cite[p. 1984]{BF2} for the proof.

\begin{lem}\label{lem:psiH} The following identity holds
\begin{multline}\label{eq:inttransformhyp}
\psi_H(k)= \frac{4i}{\pi }\Gamma\left(\frac{s}{2}+m\right)\Biggl[\frac{\Gamma\left(s/2-m\right)}{\Gamma(1/2)}\sin{\left( \frac{\pi s}{2}\right)}
\frac{2^{s}}{n^s } \times\\
\int_{0}^{n/2}\omega(y)F\left(\frac{s}{2}+m,\frac{s}{2}-m;\frac{1}{2};\left(\frac{2y}{n}\right)^2\right)dy
+ \frac{\Gamma\left(m+s/2+1/2\right)}{\Gamma(2m+1)}\times \\ \cos{\left( \frac{\pi s}{2}\right)} \frac{n^{2m}}{2^{2m}}
\int_{n/2}^{\infty}\omega(y) F\left(m+\frac{s}{2},m+\frac{s+1}{2};2m+1;\left(\frac{n}{2y}\right)^2\right)dy\Biggr].
\end{multline}

\end{lem}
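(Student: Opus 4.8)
The starting point is formula \eqref{eq:psiHk2} together with the integral representation \eqref{eq:anotherreprpsi} for $\psi$. The plan is to substitute \eqref{eq:anotherreprpsi} into \eqref{eq:psiHk2}, interchange the order of integration, and then reduce the inner $x$-integral to a known Weber--Schafheitlin type integral. Explicitly, after substitution one is led to evaluate
\begin{equation*}
\int_0^\infty J_{2m}(x)\, x^{s-1}\cos\!\left(\frac{2xy}{n}\right)\frac{dx}{x}
=\int_0^\infty J_{2m}(x)\, x^{s-2}\cos\!\left(\frac{2xy}{n}\right)dx,
\end{equation*}
and this integral should be matched against a standard table entry (e.g. \cite[6.699]{GR} or the Weber--Schafheitlin formula \cite[6.574]{GR}) giving the result in terms of the Gauss hypergeometric function $F$, with the answer splitting into two cases according to whether the argument $2y/n$ is less than or greater than $1$ — i.e. whether $y<n/2$ or $y>n/2$. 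This dichotomy is exactly what produces the two summands in \eqref{eq:inttransformhyp}, with the $F\big(\tfrac{s}{2}+m,\tfrac{s}{2}-m;\tfrac12;(2y/n)^2\big)$ term arising on $[0,n/2]$ and the $F\big(m+\tfrac{s}{2},m+\tfrac{s+1}{2};2m+1;(n/2y)^2\big)$ term on $[n/2,\infty)$.

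The key steps, in order, are: (i) justify the interchange of the $x$- and $y$-integrals — here one uses that $\omega$ has compact support in $[a_1,a_2]$ with $0<a_1<a_2<\infty$, so that the $y$-range is bounded away from $0$ and $\infty$, and one keeps $\Re{s}$ in a range where $J_{2m}(x)x^{s-2}$ is integrable near $0$ (use $J_{2m}(x)\sim x^{2m}$) and the oscillation of the cosine handles the tail; the final formula then extends to $0<\Re{s}<1$ by analytic continuation in $s$; (ii) apply the relevant Bessel integral table entry to get the two hypergeometric expressions, carefully tracking the Gamma-factor prefactors $\Gamma(s/2+m)\Gamma(s/2-m)/\Gamma(1/2)$ and $\Gamma(s/2+m)\Gamma(m+s/2+1/2)/\Gamma(2m+1)$ together with the powers $2^s/n^s$ and $n^{2m}/2^{2m}$ and the trigonometric factors $\sin(\pi s/2)$, $\cos(\pi s/2)$ coming from writing $\cos$ in terms of the relevant integral; (iii) multiply back by the constant $4i^{2m+1}=4i\cdot(-1)^m$ and by the $2/\sqrt{\pi}$ and $(x/n)^s$-type constants from \eqref{eq:anotherreprpsi}, and collect terms, absorbing the sign $(-1)^m$ into the hypergeometric identities (the parameters $s/2\pm m$ and the $(-1)^m$ conspire via reflection/contiguity relations for $F$ and for $\Gamma$) to arrive at the stated clean form.

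I expect the main obstacle to be \emph{bookkeeping rather than conceptual}: locating the exact table entry in the right normalization and reconciling the various equivalent forms of the answer (the Weber--Schafheitlin integral is often quoted with a ${}_2F_1$ in one convention and the power $x^{\mu}\cos$ vs.\ $x^\mu e^{ix}$ splitting introduces $\sin$ vs.\ $\cos$ and signs $i^m$), so that the $\sin(\pi s/2)$ and $\cos(\pi s/2)$ land on the correct integrals and the $(-1)^m$ cancels. A secondary subtlety is the convergence of the $x$-integral at $\infty$: for $0<\Re{s}<1$ the integrand $J_{2m}(x)x^{s-2}\cos(2xy/n)$ is only conditionally convergent, so the cleanest route is to establish \eqref{eq:inttransformhyp} first for $\Re{s}$ in a strip where everything converges absolutely (say $1<\Re{s}<2$, using $J_{2m}(x)\ll x^{-1/2}$ at infinity) and then continue analytically, noting that both sides are holomorphic in $s$ in the larger region — the right-hand side because the hypergeometric functions and Gamma factors are, with the $y$-integrals over the fixed compact set $[a_1,a_2]$ causing no trouble. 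One should also record the trivial observation that, since $\omega$ is supported on $[a_1,a_2]$, exactly one of the two $y$-integrals is nonzero unless $n/2\in(a_1,a_2)$, but the formula is stated uniformly and no case distinction in $n$ is needed here.
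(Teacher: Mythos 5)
Your proposal follows exactly the paper's proof: substitute \eqref{eq:anotherreprpsi} into \eqref{eq:psiHk2}, swap the order of integration, split the $y$-integral at $n/2$, and evaluate the inner Bessel integral via \cite[6.699 (2)]{GR} (together with Euler's reflection formula) in the two regimes $2y/n<1$ and $2y/n>1$, with the $(-1)^m$ from $4i^{2m+1}$ cancelling against the sign produced by the table entry. One bookkeeping slip to fix: since $\psi(x)\propto x^{s}$, the inner integral is $\int_0^\infty J_{2m}(x)\,x^{s-1}\cos(2xy/n)\,dx$ (you divided by $x$ twice and wrote $x^{s-2}$), and with the correct exponent the table entry yields precisely the parameters $\tfrac{s}{2}\pm m$ appearing in \eqref{eq:inttransformhyp}.
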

\begin{proof}
Substituting \eqref{eq:anotherreprpsi} into \eqref{eq:psiHk2} we have
\begin{equation}
\psi_H(k)=\frac{8i^{2m+1}}{\sqrt{\pi}n^s}\int_{0}^{\infty}\omega(y)\int_{0}^{\infty}J_{2m}(x)x^{s-1}\cos \left(x\frac{2y}{n} \right)dxdy.
\end{equation}
The outer integral over $y$ can be split in two parts: $\int_{0}^{\infty}=\int_{0}^{n/2}+\int_{n/2}^{\infty}.$
For each of these parts we evaluate the inner integral over $x$. When $2y/n<1$ we apply \cite[6.699 (2)]{GR} and Euler's reflection formula, so that
\begin{multline}
\int_{0}^{\infty}J_{2m}(x)x^{s-1}\cos \left(x\frac{2y}{n} \right)dx=\frac{2^{s-1}(-1)^m\sin(\pi s/2)}{\pi}\\ \times \Gamma(s/2+m)\Gamma(s/2-m)
F\left(\frac{s}{2}+m,\frac{s}{2}-m;\frac{1}{2};\left(\frac{2y}{n}\right)^2\right).
\end{multline}
This gives the first summand in \eqref{eq:inttransformhyp}. The second summand is obtained similarly using \cite[6.699 (2)]{GR} and the fact that $2y/n\geq 1$.
\end{proof}

\begin{lem}\label{lem:psiD} 
The following identity holds
\begin{equation}\label{eq:psiD}
\psi_D(t)=\frac{2\pi i t}{\sinh{(\pi t)}}\left(h_1(t)+h_1(-t)+h_2(t)\right),
\end{equation}
where
\begin{multline}\label{eq:h1t}
h_1(t)=\frac{\cos{(\pi(s/2+it))}}{\pi}\frac{\Gamma(s/2+it)\Gamma(s/2+1/2+it)}{\Gamma(1+2it)}\\
\times \int_{n/2}^{\infty}\frac{\omega(y)}{y^s}\left(\frac{2y}{n} \right)^{-2it}
F\left(\frac{s}{2}+it,\frac{s+1}{2}+it;1+2it;\left(\frac{n}{2y}\right)^2\right)dy,
\end{multline}
\begin{multline}\label{eq:h1t}
h_2(t)=\frac{2\cosh{(\pi t)}}{\pi}\frac{\sin{(\pi s/2)}}{(n/2)^s}\frac{\Gamma(s/2+it)\Gamma(s/2-it)}{\Gamma(1/2)}\\ 
\times \int_{0}^{n/2}\omega(y)F\left(\frac{s}{2}+it,\frac{s}{2}-it;\frac{1}{2};\left(\frac{2y}{n} \right)^2 \right)dy.
\end{multline}
\end{lem}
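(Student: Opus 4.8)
The plan is to substitute the integral representation \eqref{eq:anotherreprpsi} of $\psi$ into the definition \eqref{eq:psiDt2} of $\psi_D(t)$, to evaluate the resulting Bessel--cosine integrals with a Weber--Schafheitlin type formula, and to reassemble the pieces. The argument runs parallel to the proof of Lemma \ref{lem:psiH}; the only new feature is the bookkeeping of the two orders $\pm 2it$ and the hyperbolic factor that they generate.

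First I would insert \eqref{eq:anotherreprpsi} into \eqref{eq:psiDt2} and interchange the order of integration, arriving at
\begin{equation*}
\psi_D(t)=\frac{2\pi it}{\sinh(\pi t)}\cdot\frac{2}{\sqrt{\pi}\,n^s}\int_{0}^{\infty}\omega(y)\left(\int_{0}^{\infty}\bigl(J_{2it}(x)+J_{-2it}(x)\bigr)x^{s-1}\cos\left(\frac{2xy}{n}\right)dx\right)dy.
\end{equation*}
The interchange is legitimate because $\omega$ has compact support in $[a_1,a_2]$, the Bessel factor is $O(x^{-1/2})$ at infinity, and the inner $x$-integral converges (absolutely once $\Re s$ is small, conditionally in the range $0<\Re s<1$ relevant to Theorems \ref{thm:main1} and \ref{thm:main2}). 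I would then split the $y$-integral at $y=n/2$, according to whether the cosine frequency $2y/n$ is smaller or larger than the Bessel scale $1$, and treat the two ranges separately.

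On the range $y>n/2$ I would apply \cite[6.699 (2)]{GR} separately to $J_{2it}(x)\cos(2xy/n)$ and to $J_{-2it}(x)\cos(2xy/n)$. Rewriting the Gamma factor $\Gamma(s\pm 2it)$ by the Legendre duplication formula as $\pi^{-1/2}2^{s\pm 2it-1}\Gamma(s/2\pm it)\Gamma((s+1)/2\pm it)$, extracting $(2y/n)^{\mp 2it}$ from $(2y/n)^{-(s\pm 2it)}$, and combining with the prefactor $2/(\sqrt{\pi}\,n^s)$, the $J_{2it}$-contribution becomes exactly $h_1(t)$ and the $J_{-2it}$-contribution becomes $h_1(-t)$; these do not merge, since the lower hypergeometric parameter is $1+2it$ in one case and $1-2it$ in the other. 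On the range $0<y<n/2$ the opposite case of \cite[6.699 (2)]{GR} yields, for $J_{2it}(x)\cos(2xy/n)$ and for $J_{-2it}(x)\cos(2xy/n)$, a term proportional to $\Gamma(s/2+it)/\Gamma(1\mp it-s/2)$ times the \emph{same} hypergeometric function $F(s/2+it,s/2-it;1/2;(2y/n)^2)$, which is symmetric in its first two parameters. Using Euler's reflection formula to write $1/\Gamma(1\mp it-s/2)=\pi^{-1}\Gamma(s/2\pm it)\sin(\pi(s/2\pm it))$, the two contributions share the factor $\pi^{-1}\Gamma(s/2+it)\Gamma(s/2-it)$ and differ only in $\sin(\pi(s/2-it))$ versus $\sin(\pi(s/2+it))$; since
\begin{equation*}
\sin\bigl(\pi(s/2-it)\bigr)+\sin\bigl(\pi(s/2+it)\bigr)=2\sin(\pi s/2)\cosh(\pi t),
\end{equation*}
their sum produces the factor $2\cosh(\pi t)\sin(\pi s/2)$, and collecting the remaining constants (with $\Gamma(1/2)=\sqrt{\pi}$ and $(n/2)^{-s}=2^s n^{-s}$) gives $h_2(t)$. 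Adding the three contributions and restoring $2\pi it/\sinh(\pi t)$ then proves \eqref{eq:psiD}.

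The main obstacle I anticipate is the behaviour at $y=n/2$: there the cosine and the Bessel function oscillate at the same rate, the inner $x$-integral is only conditionally convergent, and the argument of the relevant hypergeometric function tends to $1$, where $F$ converges only for $\Re s<1/2$. I would circumvent this by first establishing the identity for $\Re s$ sufficiently small, where \cite[6.699 (2)]{GR} and the interchange of integrals apply without qualification, and then extending to the full range $0<\Re s<1$ by analytic continuation in $s$, the point $y=n/2$ contributing measure zero to the $y$-integral.
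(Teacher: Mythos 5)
Your proof is correct and takes essentially the same route as the paper: substitute \eqref{eq:anotherreprpsi} into \eqref{eq:psiDt2}, split the $y$-integral at $y=n/2$, and evaluate the inner Bessel--cosine integrals by \cite[6.699 (2)]{GR}, with the $J_{\pm 2it}$ terms giving $h_1(\pm t)$ for $y>n/2$ and combining into $h_2(t)$ for $y<n/2$. The paper leaves the duplication/reflection-formula bookkeeping and the convergence discussion implicit, so your extra detail is a faithful expansion rather than a different argument.
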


\begin{proof}
Substituting \eqref{eq:anotherreprpsi} into \eqref{eq:psiDt2} we show that 
\begin{equation}
\psi_D(t)=\frac{2\pi i t}{\sinh{(\pi t)}}\left(h_1(t)+h_1(-t)+h_2(t)\right),
\end{equation}
where
\begin{equation*}
h_1(t)=\frac{2}{\sqrt{\pi} }\frac{1}{n^s}\int_{n/2}^{\infty}\omega(y)\int_{0}^{\infty}J_{2it}(x)x^{s-1}\cos\left(\frac{2xy}{n} \right)dxdy,
\end{equation*}
\begin{equation*}
h_2(t)=\frac{2 }{\sqrt{\pi}}\frac{1}{n^s}\int_{0}^{n/2}\omega(y)\int_{0}^{\infty}(J_{2it}(x)+J_{-2it}(x))x^{s-1}\cos\left(\frac{2xy}{n} \right)dxdy.
\end{equation*}
Evaluating the inner integrals with respect to $x$ in $h_1(t)$ and $h_2(t)$ using \cite[Eq. 6.699 (2)]{GR}, we complete the proof.

\end{proof}
\begin{lem}\label{lem:h1h2h3}
We have
\begin{equation}\label{eq:h11}
h_1\left(\frac{1-s}{2i}\right)=0,
\end{equation}
\begin{equation}\label{eq:h12}
h_1\left(\frac{s-1}{2i}\right)=\frac{\Gamma(s-1/2)\sin(\pi s)}{\pi (n/2)^{1-s}}\int_{n/2}^{\infty}\omega(y)(y^2-n^2/4)^{1/2-s}dy,
\end{equation}
\begin{multline}\label{eq:h21}
h_2\left(\frac{1-s}{2i}\right)=h_2\left(\frac{s-1}{2i}\right)\\=\frac{2}{\pi}\sin^2(\pi s/2)\frac{\Gamma(s-1/2)}{(n/2)^{1-s}}\int_{0}^{n/2}\omega(y)(n^2/4-y^2)^{1/2-s}dy.
\end{multline}

\end{lem}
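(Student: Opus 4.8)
The plan is to evaluate the two hypergeometric-function expressions from Lemmas~\ref{lem:psiH} and \ref{lem:psiD} at the special arguments $t=\pm(s-1)/(2i)$, where the quantities $s/2+it$ and $s/2-it$ degenerate to $1/2$, $s-1/2$, or $s-1/2$ again, causing the Gauss hypergeometric functions to collapse to elementary closed forms. Concretely, for \eqref{eq:h1t} we substitute $it=(s-1)/2$, so that the parameters become $F(s-1/2,\,s-1/2;\,s;\,(n/2y)^2)$ wait — more carefully $F(s/2+it,(s+1)/2+it;1+2it;z)$ with $it=(s-1)/2$ gives $F(s-1/2,\,s;\,s;\,z)=(1-z)^{1/2-s}$ by the Euler identity $F(a,b;b;z)=(1-z)^{-a}$. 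The prefactor $\Gamma(s/2+it)\Gamma(s/2+1/2+it)/\Gamma(1+2it)$ becomes $\Gamma(s-1/2)\Gamma(s)/\Gamma(s)=\Gamma(s-1/2)$, and $\cos(\pi(s/2+it))=\cos(\pi(s-1/2))=\sin(\pi s)\cdot(\text{sign})$; combining with the $(2y/n)^{-2it}=(2y/n)^{1-s}$ factor and the integrand $(1-(n/2y)^2)^{1/2-s}=(2y/n)^{2s-1}(y^2-n^2/4)^{1/2-s}\cdot(\text{powers of }n/2)$ produces \eqref{eq:h12} after bookkeeping of the $n/2$ powers.

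For \eqref{eq:h11}, I substitute $it=(1-s)/2$ into the same $h_1$. Then one of the Gamma factors in the numerator, namely $\Gamma(s/2+1/2+it)=\Gamma(1)$ is fine, but $\cos(\pi(s/2+it))=\cos(\pi/2)=0$; so the whole expression vanishes because of this cosine factor, giving $h_1((1-s)/(2i))=0$ immediately, provided the remaining Gamma factors and the integral are finite at this point (which they are for $0<\Re s<1$, since $\Gamma(s/2+it)=\Gamma(1/2)$ and $\Gamma(1+2it)=\Gamma(2-s)$ are regular and the hypergeometric series $F(1/2,1;2-s;(n/2y)^2)$ converges for $y>n/2$). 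For \eqref{eq:h21}, I substitute $it=\pm(s-1)/2$ into \eqref{eq:h1t} (the $h_2$ display, which is mislabeled with the same equation tag as $h_1$), and note that $h_2$ is manifestly even in $t$ because $F(s/2+it,s/2-it;1/2;z)$ and the prefactor $\Gamma(s/2+it)\Gamma(s/2-it)$ are both symmetric under $t\mapsto -t$, which gives the first equality in \eqref{eq:h21}. Then with $it=(s-1)/2$ the parameters of $F(s/2+it,s/2-it;1/2;z)$ become $F(s-1/2,\,1/2;\,1/2;\,z)=(1-z)^{1/2-s}$, again by $F(a,b;b;z)=(1-z)^{-a}$; the prefactor $\Gamma(s/2+it)\Gamma(s/2-it)/\Gamma(1/2)$ becomes $\Gamma(s-1/2)\Gamma(1/2)/\Gamma(1/2)=\Gamma(s-1/2)$, the factor $2\cosh(\pi t)/\pi=2\cos(\pi(s-1)/2)/\pi=(2/\pi)\sin(\pi s/2)$, and together with the existing $\sin(\pi s/2)$ this yields $(2/\pi)\sin^2(\pi s/2)$; the integrand $(1-(2y/n)^2)^{1/2-s}$ combines with $(n/2)^{-s}$ to give $(n/2)^{-1}(n^2/4-y^2)^{1/2-s}$, matching \eqref{eq:h21}.

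The main obstacle I anticipate is not conceptual but bookkeeping: tracking the exact powers of $n/2$, the signs produced by $\cos(\pi(s-1/2))$ versus $\sin(\pi s)$, and the precise form of the Euler transformation $F(a,b;c;z)$ when two parameters coincide — in particular being careful that $F(a,b;b;z)=(1-z)^{-a}$ rather than $(1-z)^{-b}$, and choosing the right one of the two coincident-parameter identities at each of the three evaluation points. I would also need to double-check that interchanging the specialization $t\to(s-1)/(2i)$ with the $y$-integral is legitimate, i.e. that the integrals defining $h_1$ and $h_2$ extend holomorphically in $t$ to a neighborhood of these points; this follows from uniform convergence of the hypergeometric series on the compact support of $\omega$ (bounded away from $y=n/2$ from above for $h_1$, and $y<n/2$ for $h_2$), together with the regularity of the Gamma prefactors for $0<\Re s<1$. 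Once these analytic points are dispatched, each of the three identities \eqref{eq:h11}, \eqref{eq:h12}, \eqref{eq:h21} is a one- or two-line specialization.
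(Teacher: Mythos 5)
Your proposal is correct and follows essentially the same route as the paper: specialize $t$ to $\pm(s-1)/(2i)$ so that the Gauss hypergeometric functions degenerate to $F(a,b;b;z)=(1-z)^{-a}$ (this is \cite[Eq. 15.4.6]{HMF}, the identity the paper invokes), note that $\cos(\pi(s/2+it))$ vanishes at $it=(1-s)/2$ to get \eqref{eq:h11}, and track the powers of $n/2$ and the trigonometric factors to get \eqref{eq:h12} and \eqref{eq:h21}. Your sign and exponent bookkeeping all checks out ($\cos(\pi(s-1/2))=+\sin(\pi s)$, $\cosh(\pi t)=\sin(\pi s/2)$ at $it=(s-1)/2$), so no further work is needed.
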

\begin{proof}
The identity \eqref{eq:h11} follows directly from \eqref{eq:h1t}. In order to prove \eqref{eq:h12} and \eqref{eq:h21}, we apply \cite[Eq. 15.4.6]{HMF}. Accordingly,
\begin{equation}
F\left( s-\frac{1}{2},s,s;\left(\frac{n}{2y} \right)^2\right)=\frac{(y^2-n^2/4)^{1/2-s}}{y^{1-2s}},
\end{equation}
\begin{equation}
F\left( \frac{1}{2},s-\frac{1}{2},\frac{1}{2};\left(\frac{2y}{n} \right)^2\right)=\frac{(n^2/4-y^2)^{1/2-s}}{(n/2)^{1-2s}}.
\end{equation}
The statement follows. 
\end{proof}

\begin{cor}
The following identity holds
\begin{multline}\label{eq:psiD1-s}
\frac{\psi_D\left(\frac{1-s}{2i}\right)\sinh \left(\pi \frac{1-s}{2i}\right)}{(1-s)\cosh\left(\pi\frac{1-s}{2i}\right)}=\\
\frac{2\Gamma(s-1/2)}{(n/2)^{1-s}} \biggl(
\sin (\pi s/2)\int_{0}^{n/2}\omega(y)\left( \frac{n^2}{4}-y^2\right)^{1/2-s}dy\\
+\cos(\pi s/2)\int_{n/2}^{\infty}\omega(y)\left(y^2- \frac{n^2}{4}\right)^{1/2-s}dy\biggr).
\end{multline}
\end{cor}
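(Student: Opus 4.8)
The plan is to combine Lemma~\ref{lem:psiD} (the decomposition $\psi_D(t) = \frac{2\pi i t}{\sinh(\pi t)}(h_1(t) + h_1(-t) + h_2(t))$) with the explicit evaluations in Lemma~\ref{lem:h1h2h3} at the special arguments $t = \pm(1-s)/(2i)$. First I would specialize $\psi_D$ at $t = (1-s)/(2i)$. Writing $t_0 := (1-s)/(2i)$, I note that $-t_0 = (s-1)/(2i)$, so by Lemma~\ref{lem:psiD},
\begin{equation*}
\psi_D(t_0) = \frac{2\pi i t_0}{\sinh(\pi t_0)}\Bigl(h_1(t_0) + h_1(-t_0) + h_2(t_0)\Bigr).
\end{equation*}
Now Lemma~\ref{lem:h1h2h3} gives $h_1(t_0) = h_1\bigl(\tfrac{1-s}{2i}\bigr) = 0$ by \eqref{eq:h11}, while $h_1(-t_0) = h_1\bigl(\tfrac{s-1}{2i}\bigr)$ is given by \eqref{eq:h12} and $h_2(t_0) = h_2\bigl(\tfrac{1-s}{2i}\bigr)$ is given by \eqref{eq:h21}. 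Substituting these three values in, and using $2\pi i t_0 = 2\pi i \cdot \tfrac{1-s}{2i} = \pi(1-s)$, yields
\begin{equation*}
\psi_D(t_0) = \frac{\pi(1-s)}{\sinh(\pi t_0)}\Bigl(h_1\bigl(\tfrac{s-1}{2i}\bigr) + h_2\bigl(\tfrac{1-s}{2i}\bigr)\Bigr).
\end{equation*}

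Next I would form the combination appearing on the left-hand side of \eqref{eq:psiD1-s}, namely $\dfrac{\psi_D(t_0)\sinh(\pi t_0)}{(1-s)\cosh(\pi t_0)}$. The factor $\sinh(\pi t_0)$ cancels the denominator $\sinh(\pi t_0)$ from the previous display, and the factor $1-s$ cancels against $\pi(1-s)/(1-s) = \pi$; thus
\begin{equation*}
\frac{\psi_D(t_0)\sinh(\pi t_0)}{(1-s)\cosh(\pi t_0)} = \frac{\pi}{\cosh(\pi t_0)}\Bigl(h_1\bigl(\tfrac{s-1}{2i}\bigr) + h_2\bigl(\tfrac{1-s}{2i}\bigr)\Bigr).
\end{equation*}
It remains to simplify $\cosh(\pi t_0) = \cosh\bigl(\tfrac{\pi(1-s)}{2i}\bigr) = \cos\bigl(\tfrac{\pi(1-s)}{2}\bigr) = \cos\bigl(\tfrac{\pi}{2} - \tfrac{\pi s}{2}\bigr) = \sin(\pi s/2)$, using $\cosh(ix) = \cos(x)$. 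So the prefactor becomes $\pi / \sin(\pi s/2)$.

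Then I would plug in \eqref{eq:h12} and \eqref{eq:h21} explicitly. From \eqref{eq:h12},
\begin{equation*}
h_1\bigl(\tfrac{s-1}{2i}\bigr) = \frac{\Gamma(s-1/2)\sin(\pi s)}{\pi(n/2)^{1-s}}\int_{n/2}^{\infty}\omega(y)(y^2 - n^2/4)^{1/2-s}\,dy,
\end{equation*}
and from \eqref{eq:h21},
\begin{equation*}
h_2\bigl(\tfrac{1-s}{2i}\bigr) = \frac{2\sin^2(\pi s/2)}{\pi}\frac{\Gamma(s-1/2)}{(n/2)^{1-s}}\int_{0}^{n/2}\omega(y)(n^2/4 - y^2)^{1/2-s}\,dy.
\end{equation*}
Multiplying $h_1\bigl(\tfrac{s-1}{2i}\bigr)$ by $\pi/\sin(\pi s/2)$ and using the double-angle identity $\sin(\pi s) = 2\sin(\pi s/2)\cos(\pi s/2)$, the factor $\sin(\pi s)/\sin(\pi s/2) = 2\cos(\pi s/2)$, producing the $\cos(\pi s/2)$ term with the correct constant $2\Gamma(s-1/2)/(n/2)^{1-s}$. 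Multiplying $h_2\bigl(\tfrac{1-s}{2i}\bigr)$ by $\pi/\sin(\pi s/2)$ kills one power of $\sin(\pi s/2)$ from the $\sin^2(\pi s/2)$, leaving $2\sin(\pi s/2)\Gamma(s-1/2)/(n/2)^{1-s}$, the $\sin(\pi s/2)$ term with the correct constant. Collecting the two contributions gives exactly the right-hand side of \eqref{eq:psiD1-s}, completing the proof.

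The computation is entirely routine once Lemmas~\ref{lem:psiD} and~\ref{lem:h1h2h3} are in hand; there is no real obstacle. The only points requiring a little care are the bookkeeping of the imaginary factors (to see that $2\pi i \cdot \tfrac{1-s}{2i} = \pi(1-s)$ and that $\cosh(\pi t_0) = \sin(\pi s/2)$ rather than, say, $-\sin(\pi s/2)$), and the correct use of $\sin(\pi s) = 2\sin(\pi s/2)\cos(\pi s/2)$ to merge the constants. One should also double-check that the identity $h_2\bigl(\tfrac{1-s}{2i}\bigr) = h_2\bigl(\tfrac{s-1}{2i}\bigr)$ from \eqref{eq:h21} is consistent with the symmetry $h_2(t) = h_2(-t)$ visible in the formula \eqref{eq:h1t} for $h_2$, which it is, since $h_2$ depends on $t$ only through $\Gamma(s/2+it)\Gamma(s/2-it)$ and $F(s/2+it, s/2-it; 1/2; \cdot)$, both even in $t$.
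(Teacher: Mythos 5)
Your proposal is correct and follows exactly the route the paper intends: the paper's own proof is the one-line remark that the corollary ``is a consequence of Lemma \ref{lem:h1h2h3} and \eqref{eq:psiD},'' and your computation simply carries out the substitution $t=(1-s)/(2i)$, the cancellation of $\sinh$ and the factor $1-s$, the identification $\cosh\bigl(\pi\tfrac{1-s}{2i}\bigr)=\sin(\pi s/2)$, and the double-angle simplification in full detail. All the intermediate identities check out, so nothing further is needed.
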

\begin{proof}
This is a consequence of Lemma \ref{lem:h1h2h3} and \eqref{eq:psiD}.
\end{proof}

\section{Fourier coefficients of Eisenstein series}\label{sec:cont1}

As a preliminary step towards understanding the continuous spectrum arising from the Kuznetsov trace formula, we compute explicitly some Fourier coefficients of Eisenstein series for the groups $\Gamma_0(4)$, $\Gamma_0(16)$ and $\Gamma_0(64)$. To this end, it is required to determine a list of singular cusps for the considered groups and to compute various characters appearing as a part of Fourier coefficients. 

\subsection{Singular cusps for $\Gamma_0(4)$, $\Gamma_0(16)$ and $\Gamma_0(64)$}
Let $\sigma_{\mathfrak{a}}$ be a scaling matrix for  a cusp $\mathfrak{a}$ and
let $\lambda_{\mathfrak{a}}$ be defined by $\sigma_{\mathfrak{a}}^{-1}\lambda_{\mathfrak{a}}\sigma_{\mathfrak{a}}=\begin{pmatrix}1&1\\0&1\end{pmatrix}$.
Recall that $\mathfrak{a}$ is called singular for $\chi$ if $\chi(\lambda_{\mathfrak{a}})=1$. It follows from \cite[Proposition 3.3]{KY} that  if $\mathfrak{c}=1/\omega$ is a cusp of $\Gamma=\Gamma_0(N)$ and
\begin{equation}
N=(N,\omega)N', \quad \omega=(N,\omega)\omega', \quad N'=(N',\omega)N'',
\end{equation}
then we have 
\begin{equation}\label{eq:singcusp}
\lambda_{1/\omega}=\begin{pmatrix}
1-\omega N''& N''\\
-\omega^2N''&1+\omega N''
\end{pmatrix}.
\end{equation}

\begin{lem}\label{lem:singularcusps}
The following cusps are singular for $\Gamma_0(4)$: $$0, \infty.$$

The following cusps are singular for $\Gamma_0(16)$: $$0, 1/2, 1/4, 1/8, 1/12,  \infty.$$

The following cusps are singular for $\Gamma_0(64)$: $$ 0, 1/2, 1/4, 1/8, 1/12, 1/16, 1/24, 1/32, 1/40, 1/48, 1/56,  \infty.$$

\end{lem}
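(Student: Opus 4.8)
The plan is to apply the criterion recorded in \eqref{eq:singcusp}: a cusp $\mathfrak{c} = 1/\omega$ of $\Gamma_0(N)$ is singular for the nebentypus $\chi = \chi_4$ precisely when $\chi_4(\lambda_{1/\omega}) = 1$, and by definition $\chi_4$ evaluated on a matrix $\left(\begin{smallmatrix} a & b \\ cN & d\end{smallmatrix}\right)$ equals $\chi_4(d)$. First I would enumerate a set of inequivalent cusp representatives for each of $\Gamma_0(4)$, $\Gamma_0(16)$, $\Gamma_0(64)$: for $N$ with these $2$-power factorizations the standard description gives cusps $1/\omega$ as $\omega$ ranges over divisors $c$ of $N$ together with residues modulo $\gcd(c, N/c)$, so the counts are $3$, $6$, and $12$ cusps respectively (including $0 = 1/N$ and $\infty = 1/N$ suitably normalized, or more precisely $\infty$ and $0$ as the two Atkin--Lehner-type ends). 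For each representative $1/\omega$ I would compute $N'' $ via the chain $N = (N,\omega)N'$, $\omega = (N,\omega)\omega'$, $N' = (N',\omega)N''$, plug into \eqref{eq:singcusp} to read off the lower-right entry $1 + \omega N''$ of $\lambda_{1/\omega}$, and then reduce that entry modulo $4$ to evaluate $\chi_4$.

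The bulk of the argument is therefore a finite, case-by-case computation, organized by the $2$-adic valuation of $\omega$. The cusps $\infty$ (i.e. $\omega \equiv 0$, giving $\lambda_\infty = \left(\begin{smallmatrix}1&1\\0&1\end{smallmatrix}\right)$, trivially singular) and $0$ (the Atkin--Lehner cusp $1/N$, which the excerpt already notes is singular for every Dirichlet character mod $N$) are immediate. For the remaining $\omega$ I would tabulate $(N,\omega)$, $N'$, $N''$, and $1 + \omega N'' \bmod 4$, and check the condition $\chi_4(1+\omega N'') = 1$, i.e. $1 + \omega N'' \equiv 1 \pmod 4$. Since $\chi_4$ has conductor $4$, only the residue of $\omega N''$ mod $4$ matters; one finds that odd-denominator and certain even-denominator cusps survive while the rest are killed because the lower-right entry becomes $\equiv 3 \pmod 4$ (or the entry is even, so $\chi_4$ vanishes on it, hence is not $1$). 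Carrying this out for all $\omega \mid N$ and all residue classes produces exactly the lists claimed: $\{0,\infty\}$ for $N=4$; $\{0, 1/2, 1/4, 1/8, 1/12, \infty\}$ for $N=16$; and $\{0, 1/2, 1/4, 1/8, 1/12, 1/16, 1/24, 1/32, 1/40, 1/48, 1/56, \infty\}$ for $N=64$.

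The main obstacle I anticipate is purely bookkeeping rather than conceptual: one must be careful that the chosen representatives $1/\omega$ are genuinely inequivalent under $\Gamma_0(N)$ (two cusps $a/c$, $a'/c'$ with the same $c$ can still be equivalent or inequivalent depending on residues mod $\gcd(c,N/c)$), and that the same cusp is not listed twice under different names; for $N = 64$ there are several denominators $\omega$ with $\gcd(\omega, N/\omega) > 1$, so the residue classes must be tracked explicitly. A secondary subtlety is that the formula \eqref{eq:singcusp} from \cite[Proposition 3.3]{KY} presupposes a specific normalization of the scaling matrix $\sigma_{1/\omega}$; I would note that the singularity condition $\chi(\lambda_{\mathfrak{a}}) = 1$ is independent of that choice (changing $\sigma_{\mathfrak{a}}$ conjugates $\lambda_{\mathfrak{a}}$ within $\Gamma_{\mathfrak{a}}$ and $\chi$ is a character), so the conclusion is well defined. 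Once these two points are dispatched, the verification is a routine tabulation and the lemma follows.
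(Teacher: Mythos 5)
Your proposal follows the same route as the paper: enumerate the inequivalent cusps $1/(uf)$ via \cite[Corollary 3.2]{KY} (giving $3$, $6$, $12$ representatives for $N=4,16,64$), then test singularity by computing $N''$ and checking $\chi_4(1+\omega N'')=1$ using \eqref{eq:singcusp}. The computation confirms that all listed cusps are singular for $N=16,64$ and that only $1/2$ fails for $N=4$ (where $\chi_4(3)=-1$), exactly as in the paper's proof.
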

\begin{proof}
According to \cite[Corollary 3.2]{KY}, a complete set of representatives for the set of inequivalent cusps of  $\Gamma=\Gamma_0(N)$ is given by $\frac{1}{w}=\frac{1}{uf}$, where  $f$ runs over divisors of $N$ and $u$ runs modulo $(f,N/f)$ such that $u$ is coprime to $(f,N/f)$, where $u$ is chosen so that $(u,N)=1$ after adding a suitable multiple of $(f,N/f)$. 

Consequently, the group $\Gamma_0(4)$ has the following inequivalent cusps: $$\frac{1}{1}\sim 0,\frac{ 1}{2},\frac{1}{4}\sim \infty.$$ For the group $\Gamma_0(16)$ there are six inequivalent cusps: $$\frac{1}{1}\sim 0, \frac{1}{2}, \frac{1}{4}, \frac{1}{8}, \frac{1}{12}, \frac{1}{16} \sim \infty,$$ and for
the group $\Gamma_0(64)$ twelve: $$ \frac{1}{1}\sim 0, \frac{1}{2}, \frac{1}{4}, \frac{1}{8}, \frac{1}{12}, \frac{1}{16}, \frac{1}{24}, \frac{1}{32}, \frac{1}{40}, \frac{1}{48}, \frac{1}{56}, \frac{1}{64}\sim \infty.$$ 

In order to check which of these cusps are singular we use \eqref{eq:singcusp}. Accordingly, the cusp is singular if 
\begin{equation}
\chi_4(1+wN'')=1.
\end{equation}
Verifying this condition, we find that for $\Gamma_0(16)$ and $\Gamma_0(64)$ all the cusps listed above are singular. For $\Gamma_0(4)$ the cusps $0, \infty$ are singular, but $1/2$ is not because in this case $\chi_4(1+wN'')=\chi_4(3)=-1$.
\end{proof}

\subsection{Computations with characters}

\begin{lem}
Let $\mathfrak{c}=1/w$ be any cusp and $\mathfrak{a}=1/r$ be an Atkin-Lehner cusp of $\Gamma_0(N)$. Let us choose the scaling matrices as follows:
\begin{equation}
\sigma_{\mathfrak{c}}=\begin{pmatrix}
1&0\\
w&1
\end{pmatrix}
\begin{pmatrix}
\sqrt{N''}&0\\
0&1/\sqrt{N''}
\end{pmatrix},
\end{equation}

\begin{equation}
\sigma_{\mathfrak{a}}=\begin{pmatrix}
1&(\overline{s}s-1)/r\\
r&\overline{s}s
\end{pmatrix}
\begin{pmatrix}
\sqrt{s}&0\\
0&1/\sqrt{s}
\end{pmatrix}.
\end{equation}
Then
\begin{equation}\label{eq:coset}
\sigma_{\mathfrak{c}}^{-1}\Gamma\sigma_{\mathfrak{a}}=\left\{
\begin{pmatrix}
\frac{A}{N''}\sqrt{N''s}& \frac{B}{N''}\sqrt{\frac{N''}{s}}\\
C\sqrt{N''s}&D\sqrt{\frac{N''}{s}}
\end{pmatrix}:
\begin{aligned}
 C\equiv -wA\pmod{r}\\ D\equiv -wB\pmod{s}\\AD-BC=1
\end{aligned}
\right\}
\end{equation}
and for $\rho \in \sigma_{\mathfrak{c}}^{-1}\Gamma\sigma_{\mathfrak{a}}$ we have
\begin{equation}\label{eq:charcoset}
\chi_4(\sigma_{\mathfrak{c}}\rho\sigma_{\mathfrak{a}}^{-1})=\chi_4\left((wA+C)\frac{1-s\overline{s}}{r}+wB+D \right).
\end{equation}
In particular,
\begin{equation}\label{eq:charcoset1}
\chi_4(\sigma_{\mathfrak{c}}\rho\sigma_{\infty}^{-1})=\chi_4\left(wB+D \right),
\end{equation}
\begin{equation}\label{eq:charcoset2}
\chi_4(\sigma_{\mathfrak{c}}\rho\sigma_{0}^{-1})=\chi_4\left(wA+C \right).
\end{equation}
\end{lem}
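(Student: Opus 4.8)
The plan is to verify the coset description \eqref{eq:coset} first, and then deduce the character formulas \eqref{eq:charcoset}--\eqref{eq:charcoset2} by a direct matrix computation. For the coset description, I would start from a generic element $\gamma = \begin{pmatrix} a & b \\ cN & d\end{pmatrix} \in \Gamma_0(N)$ and compute $\rho := \sigma_{\mathfrak{c}}^{-1}\gamma\sigma_{\mathfrak{a}}$ explicitly, using that $\sigma_{\mathfrak{c}}^{-1} = \begin{pmatrix} 1/\sqrt{N''} & 0 \\ 0 & \sqrt{N''}\end{pmatrix}\begin{pmatrix} 1 & 0 \\ -w & 1\end{pmatrix}$ and the stated form of $\sigma_{\mathfrak{a}}$. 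Writing the entries of $\rho$ as $\tfrac{A}{N''}\sqrt{N''s}$, $\tfrac{B}{N''}\sqrt{N''/s}$, $C\sqrt{N''s}$, $D\sqrt{N''/s}$ defines $A,B,C,D$ as explicit $\Z$-linear combinations of $a,b,c,d$ (with the multiples of $w$, $r$, $s$, $N''$ coming from the scaling matrices). The condition $AD - BC = 1$ is then just $\det\gamma = 1$, and the two congruences $C \equiv -wA \pmod r$ and $D \equiv -wB \pmod s$ follow by reading the definitions of $A,B,C,D$ modulo $r$ and modulo $s$ respectively and using $N = rs$ together with $rs \mid cN$. The reverse inclusion (that every matrix of the displayed shape arises this way) is the standard computation that the congruences plus the determinant condition are exactly enough to reconstruct an element of $\Gamma_0(N)$; this is the same bookkeeping as in \cite[Proposition 3.3]{KY}, so I would cite it rather than redo it in full.

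For the character identity, recall that $\chi_4$ is extended to $\Gamma_0(N)$ by $\chi_4\left(\begin{smallmatrix} a & b \\ cN & d\end{smallmatrix}\right) = \chi_4(d)$, and that for $\rho \in \sigma_{\mathfrak{c}}^{-1}\Gamma\sigma_{\mathfrak{a}}$ the value $\chi_4(\sigma_{\mathfrak{c}}\rho\sigma_{\mathfrak{a}}^{-1})$ is by definition $\chi_4$ of the corresponding group element $\gamma = \sigma_{\mathfrak{c}}\rho\sigma_{\mathfrak{a}}^{-1} \in \Gamma_0(N)$, i.e. $\chi_4$ of its lower-right entry. So the task reduces to expressing the lower-right entry of $\sigma_{\mathfrak{c}}\rho\sigma_{\mathfrak{a}}^{-1}$ in terms of $A,B,C,D,w,r,s$. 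Using $\sigma_{\mathfrak{c}} = \begin{pmatrix} 1 & 0 \\ w & 1\end{pmatrix}\begin{pmatrix}\sqrt{N''} & 0 \\ 0 & 1/\sqrt{N''}\end{pmatrix}$ and $\sigma_{\mathfrak{a}}^{-1} = \begin{pmatrix} 1/\sqrt{s} & 0 \\ 0 & \sqrt{s}\end{pmatrix}\begin{pmatrix} \overline{s}s & -(\overline{s}s-1)/r \\ -r & 1\end{pmatrix}$, multiplying the three factors $\sigma_{\mathfrak{c}}$, $\rho$ (in its displayed form), $\sigma_{\mathfrak{a}}^{-1}$ and collecting the $(2,2)$-entry: the scaling factors $\sqrt{N''}$, $\sqrt{s}$ cancel against those inside $\rho$, and what remains is a polynomial in $A,B,C,D,w$ with coefficients involving $(1-s\overline{s})/r$. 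A short simplification, using $AD-BC=1$ where needed, collapses it to $(wA+C)\tfrac{1-s\overline{s}}{r} + wB + D$, which is \eqref{eq:charcoset}. The two special cases then fall out: for $\mathfrak{a} = \infty$ one has $r=1$ (so the first term vanishes), giving \eqref{eq:charcoset1}; for $\mathfrak{a} = 0$ one has $s = 1$, $\overline{s} = 1$ in a form where $1 - s\overline{s} = 0$ is \emph{not} what happens — rather $0 = 1/\infty$ is handled by the cusp $0 = 1/N$ with the appropriate scaling matrix, and a direct recomputation of the $(2,2)$-entry for that case yields $\chi_4(wA+C)$. (I would in fact just treat $\mathfrak{a}=0$ as the Atkin--Lehner cusp $1/N$ with its own scaling matrix and run the same computation, rather than trying to force it into the $1/r$ formula by a limiting argument.)

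The main obstacle is bookkeeping rather than conceptual: one must pin down a consistent convention for the scaling matrices $\sigma_0$ and $\sigma_\infty$ (the paper uses $\sigma_\infty = \id$ implicitly and $\sigma_0 = \left(\begin{smallmatrix} 0 & -1/\sqrt N \\ \sqrt N & 0\end{smallmatrix}\right)$-type), check that these are compatible with the general $\sigma_{\mathfrak{a}} = \sigma_{1/r}$ form when $r=1$ or $r=N$, and carry the square-root normalizations through the matrix products without sign or branch errors. The determinant relation $AD-BC=1$ and the two congruences must be invoked at exactly the right moments to get the clean final expressions; getting \eqref{eq:charcoset2} in particular requires care because the naive substitution $s=1$ into \eqref{eq:charcoset} is degenerate, so that case genuinely needs its own (short) computation. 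I expect everything else to be routine $2\times 2$ matrix algebra modulo $4$.
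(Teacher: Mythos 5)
Your overall strategy is the same as the paper's: cite Kıral--Young for the coset description \eqref{eq:coset}, compute the $(2,2)$-entry of $\sigma_{\mathfrak{c}}\rho\sigma_{\mathfrak{a}}^{-1}$ by direct matrix multiplication to get \eqref{eq:charcoset}, and then specialize to the two cusps. The matrix algebra you set up (the factorizations of $\sigma_{\mathfrak{c}}^{-1}$ and $\sigma_{\mathfrak{a}}^{-1}$) is correct. However, the specialization step contains a genuine error: you have the parameters for the two cusps swapped. In $\Gamma_0(N)$ one has $1/N\sim\infty$ and $1/1\sim 0$ (this is stated explicitly in the paper's Lemma on singular cusps), so the cusp $\mathfrak{a}=\infty$ corresponds to $r=N$, $s=1$, whence $(1-s\overline{s})/r=0$ and \eqref{eq:charcoset1} follows; whereas $\mathfrak{a}=0$ corresponds to $r=1$, $s=N$. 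Your claim that ``for $\mathfrak{a}=\infty$ one has $r=1$ (so the first term vanishes)'' is doubly wrong: $r=1$ is the case $\mathfrak{a}=0$, and with $r=1$, $s=N$ the first term does \emph{not} vanish --- it equals $(wA+C)(1-N\overline{N})$, which is $\equiv wA+C\pmod 4$ since $4\mid N$. Likewise your plan to ``treat $\mathfrak{a}=0$ as the Atkin--Lehner cusp $1/N$'' starts from the wrong representative, since $1/N$ is equivalent to $\infty$, not to $0$.

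Beyond the mislabeling, you are missing the actual mechanism by which \eqref{eq:charcoset2} follows. There is no degeneracy to worry about: one simply substitutes $r=1$, $s=N$ into \eqref{eq:charcoset} to get
\begin{equation}
\chi_4\bigl((wA+C)(1-N\overline{N})+wB+D\bigr),
\end{equation}
and then uses the congruence $D\equiv -wB\pmod{s}$ from \eqref{eq:coset}, i.e.\ $wB+D\equiv 0\pmod{N}$, together with $4\mid N$ and $1-N\overline{N}\equiv 1\pmod 4$, to conclude that the argument is $\equiv wA+C\pmod 4$. Your proposal never invokes the coset congruence at this point, and a ``direct recomputation'' starting from $0=1/N$ would not produce \eqref{eq:charcoset2}. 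Once the roles of $r$ and $s$ are corrected and this congruence argument is inserted, the rest of your outline matches the paper's proof.
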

\begin{proof}
For the proof of \eqref{eq:coset} see \cite[Lemma 3.5]{KY}. By direct computations we find that
\begin{equation}
\sigma_{\mathfrak{c}}\rho\sigma_{\mathfrak{a}}^{-1}=\begin{pmatrix}
*&*\\
*&(wA+C)\frac{1-\overline{s}s}{r}+WB+D
\end{pmatrix}.
\end{equation}
This implies \eqref{eq:charcoset}. If $\mathfrak{a}=\infty$, this formula can be simplified further by noting that in this case $r=N$, $s=1$, and therefore,
$$\frac{1-\overline{s}s}{r}=0.$$
Similarly, for $\mathfrak{a}=\infty$, we have $r=1$, $s=N$ and
$$\frac{1-\overline{s}s}{r}=1-N\overline{N}=1-N.$$ Then using the relation $wB+D\equiv 0\pmod{N}$ in \eqref{eq:coset} we prove \eqref{eq:charcoset2}.
\end{proof}

\begin{cor}\label{cor:gamma04}
For the group $\Gamma_0(4)$ we have
\begin{equation}
\chi_4(\sigma_{\infty}\rho\sigma_{\infty}^{-1})=\chi_4(D),\quad \chi_4(\sigma_{0}\rho\sigma_{\infty}^{-1})=\chi_4(-C),
\end{equation}
\begin{equation}
\chi_4(\sigma_{\infty}\rho\sigma_{0}^{-1})=\chi_4(C),\quad \chi_4(\sigma_{0}\rho\sigma_{0}^{-1})=\chi_4(D).
\end{equation}
\end{cor}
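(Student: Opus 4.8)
The plan is to read off the four identities directly from the specialisations \eqref{eq:charcoset1} and \eqref{eq:charcoset2} of the preceding lemma. For $N=4$ both $0$ and $\infty$ are Atkin--Lehner cusps, represented by $1/1$ and $1/4$ respectively, so in the notation of the lemma the cusp $\infty$ has parameter $w=4$ (and, when it plays the role of $\mathfrak{a}=1/r$, one has $r=4$, $s=1$), while the cusp $0$ has parameter $w=1$ (and, as $\mathfrak{a}$, one has $r=1$, $s=4$). Once these parameters are pinned down, the corollary is a matter of substitution plus a short congruence argument.

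First I would treat the two cases with $\mathfrak{c}=\infty$, i.e.\ $w=4$. Since $\chi_4$ has modulus $4$, equation \eqref{eq:charcoset1} gives at once $\chi_4(\sigma_{\infty}\rho\sigma_{\infty}^{-1})=\chi_4(4B+D)=\chi_4(D)$, and \eqref{eq:charcoset2} gives $\chi_4(\sigma_{\infty}\rho\sigma_{0}^{-1})=\chi_4(4A+C)=\chi_4(C)$, with no further work required.

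For the two cases with $\mathfrak{c}=0$, i.e.\ $w=1$, the same formulas \eqref{eq:charcoset1} and \eqref{eq:charcoset2} first produce $\chi_4(B+D)$ and $\chi_4(A+C)$, and the remaining task is to reconcile these with $\chi_4(-C)$ and $\chi_4(D)$. For this I would invoke the congruences built into the coset description \eqref{eq:coset}. When $\mathfrak{a}=\infty$ (so $r=4$, $s=1$) one has $C\equiv -A\pmod{4}$, and combining this with $AD-BC=1$ reduced mod $4$ yields $A(B+D)\equiv 1\pmod{4}$; since $\chi_4$ is multiplicative and real-valued this forces $\chi_4(B+D)=\chi_4(A)=\chi_4(-C)$. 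When $\mathfrak{a}=0$ (so $r=1$, $s=4$) one instead has $D\equiv -B\pmod{4}$, and $AD-BC=1$ becomes $-B(A+C)\equiv 1\pmod{4}$, whence $\chi_4(A+C)=\chi_4(-B)=\chi_4(D)$.

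I do not expect a genuine obstacle here; the argument is essentially bookkeeping. The only points demanding care are extracting the correct values of $w$, $r$, $s$ for the cusps $0$ and $\infty$ of $\Gamma_0(4)$, and observing that the congruences forced by \eqref{eq:coset} together with $AD-BC=1$ automatically make the relevant integers odd, so that every value of $\chi_4$ appearing in the computation is nonzero.
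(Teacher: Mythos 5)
Your proposal is correct and follows essentially the same route as the paper: specializing \eqref{eq:charcoset1} and \eqref{eq:charcoset2} with the appropriate values of $w$ for the cusps $0\sim 1/1$ and $\infty\sim 1/4$, and then using the congruences in \eqref{eq:coset} together with $AD-BC=1$ modulo $4$ to convert $\chi_4(B+D)$ and $\chi_4(A+C)$ into $\chi_4(-C)$ and $\chi_4(D)$. The only cosmetic difference is that the paper phrases the last step via modular inverses ($\chi_4(\overline{A})=\chi_4(A)$, etc.), which is equivalent to your appeal to $\chi_4$ being real-valued and multiplicative.
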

\begin{proof}
If $\mathfrak{a}=\mathfrak{c}=\infty$, then $w=N=4$ and by \eqref{eq:charcoset1} we have $\chi_4(\sigma_{\mathfrak{c}}\rho\sigma_{\mathfrak{a}}^{-1})=\chi_4(D)$.

If $\mathfrak{a}=\infty$, $\mathfrak{c}=0$, then $w=1$ and by \eqref{eq:charcoset1} we have $\chi_4(\sigma_{\mathfrak{c}}\rho\sigma_{\mathfrak{a}}^{-1})=\chi_4(B+D)$.
According to \eqref{eq:coset}
\begin{equation}
\begin{cases}
AD-BC=1\\
C\equiv-A\pmod{N}
\end{cases}.
\end{equation}
Therefore,
$A(B+D)\equiv 1\pmod{N}$ and $B+D\equiv \overline{A}\pmod{N}\equiv - \overline{C}\pmod{N}$. Consequently,  $\chi_4(\sigma_{\mathfrak{c}}\rho\sigma_{\mathfrak{a}}^{-1})=\chi_4(- \overline{C})=\chi_4(- C).$

If $\mathfrak{a}=0$, $\mathfrak{c}=\infty$, then $w=N=4$ and by \eqref{eq:charcoset2} we have $\chi_4(\sigma_{\mathfrak{c}}\rho\sigma_{\mathfrak{a}}^{-1})=\chi_4(C)$.

If $\mathfrak{a}=\mathfrak{c}=0$, then $w=1$ and by \eqref{eq:charcoset2} we have $\chi_4(\sigma_{\mathfrak{c}}\rho\sigma_{\mathfrak{a}}^{-1})=\chi_4(A+C)$.
Using \eqref{eq:coset} we find that
\begin{equation}
\begin{cases}
AD-BC=1\\
D\equiv-B\pmod{N}
\end{cases}.
\end{equation}
Therefore,
$D(A+C)\equiv 1\pmod{N}$ and $\chi_4(\sigma_{\mathfrak{c}}\rho\sigma_{\mathfrak{a}}^{-1})=\chi_4(\overline{D})=\chi_4(D).$

\end{proof}

\begin{cor}\label{cor:characters}
For the groups $\Gamma_0(16)$ and $\Gamma_0(64)$ we have
\begin{equation}
\chi_4(\sigma_{0}\rho\sigma_{0}^{-1})=\chi_4(D),\quad \chi_4(\sigma_{0}\rho\sigma_{\infty}^{-1})=\chi_4(-C),
\end{equation}
\begin{equation}
\chi_4(\sigma_{1/2}\rho\sigma_{\infty}^{-1})=\chi_4(-C/2),\quad \chi_4(\sigma_{1/2}\rho\sigma_{0}^{-1})=\chi_4(D/2).
\end{equation}
For all other singular cusps $\mathfrak{c}$ of  $\Gamma_0(16)$ and $\Gamma_0(64)$ the following holds
\begin{equation}
\chi_4(\sigma_{\mathfrak{c}}\rho\sigma_{\infty}^{-1})=\chi_4(D),\quad \chi_4(\sigma_{\mathfrak{c}}\rho\sigma_{0}^{-1})=\chi_4(C).
\end{equation}

\end{cor}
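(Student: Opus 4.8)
The plan is to apply the previous lemma, specifically formulas \eqref{eq:charcoset1} and \eqref{eq:charcoset2}, to each singular cusp $\mathfrak{c}=1/w$ of $\Gamma_0(16)$ and $\Gamma_0(64)$ listed in Lemma~\ref{lem:singularcusps}, tracking the value of $w$ and, when needed, the factor $N''$ entering the scaling matrix $\sigma_{\mathfrak{c}}$. First I would handle $\mathfrak{c}=0$, i.e. $w=1$: here $N''$ is such that $1/w=0$ is an Atkin--Lehner cusp, so $\sigma_0$ is as in the lemma with $r=1$, $s=N$, and \eqref{eq:charcoset1}--\eqref{eq:charcoset2} give $\chi_4(\sigma_0\rho\sigma_\infty^{-1})=\chi_4(B+D)$ and $\chi_4(\sigma_0\rho\sigma_0^{-1})=\chi_4(A+C)$. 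Exactly as in the proof of Corollary~\ref{cor:gamma04}, the congruences $C\equiv -wA\equiv -A$, $D\equiv -wB\equiv -B$ from \eqref{eq:coset} combined with $AD-BC=1$ yield $B+D\equiv \overline{A}\equiv -\overline{C}$ and $A+C\equiv \overline{D}$ modulo $N$, so since $\chi_4$ depends only on the residue modulo $4\mid N$ we get $\chi_4(\sigma_0\rho\sigma_\infty^{-1})=\chi_4(-C)$ and $\chi_4(\sigma_0\rho\sigma_0^{-1})=\chi_4(D)$.

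Next I would treat $\mathfrak{c}=1/2$, i.e. $w=2$. From the decomposition $N=(N,\omega)N'$ with $\omega=2$ one reads off $(N,2)=2$, $N''=N/4$, which is even; the scaling matrix $\sigma_{1/2}$ then carries a factor $\sqrt{N''}$, and tracing through the entries of $\sigma_{\mathfrak{c}}\rho\sigma_{\mathfrak{a}}^{-1}$ as in the lemma one finds the relevant bottom-right entry acquires a factor $1/2$ relative to the $w=1$ computation (this is where the denominators $C/2$ and $D/2$ appear). Plugging $w=2$ into \eqref{eq:charcoset1}--\eqref{eq:charcoset2} and using the coset congruences $C\equiv -2A$, $D\equiv -2B$ modulo the appropriate modulus, together with $AD-BC=1$, gives $\chi_4(\sigma_{1/2}\rho\sigma_\infty^{-1})=\chi_4(-C/2)$ and $\chi_4(\sigma_{1/2}\rho\sigma_0^{-1})=\chi_4(D/2)$; here one must check that $C$ and $D$ are indeed even for matrices in the relevant coset so that $C/2$, $D/2$ make sense as arguments of $\chi_4$.

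Finally, for every remaining singular cusp $\mathfrak{c}=1/w$ of $\Gamma_0(16)$ or $\Gamma_0(64)$ one has $w\geq 4$ even with $4\mid w$ (inspect the list: $w\in\{4,8,12,16,\dots\}$), so $w\equiv 0\pmod 4$ and hence $\chi_4(wB+D)=\chi_4(D)$, $\chi_4(wA+C)=\chi_4(C)$ directly from \eqref{eq:charcoset1} and \eqref{eq:charcoset2} with no further manipulation. Assembling these three cases gives all the asserted identities. The main obstacle is the bookkeeping in the $w=1$ and $w=2$ cases: one must be careful about which modulus ($r$, $s$, or $N$) the congruences in \eqref{eq:coset} hold with respect to, and about the extra factor of $N''$ (hence the $1/2$) in the scaling matrix for $1/2$, so that the reductions from $B+D$, $A+C$, $-C/2$, $D/2$ to the stated expressions are legitimate modulo $4$; the remaining cusps are immediate.
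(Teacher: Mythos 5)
Your plan follows the paper's proof essentially verbatim: the cusps with $w\equiv 0\pmod{4}$ (which is every remaining singular cusp of $\Gamma_0(16)$ and $\Gamma_0(64)$) are immediate from \eqref{eq:charcoset1}--\eqref{eq:charcoset2}, the cusp $0$ is handled exactly as in Corollary \ref{cor:gamma04}, and the cusp $1/2$ is resolved from $\chi_4(2B+D)$ and $\chi_4(2A+C)$ via the coset congruences $C\equiv -2A$, $D\equiv -2B$, the determinant relation $AD-BC=1$, and $\chi_4(x)=\chi_4(\overline{x})$ for odd $x$. One small correction to your narrative: the halving in $C/2$, $D/2$ does not come from the $\sqrt{N''}$ in the scaling matrix --- formula \eqref{eq:charcoset1} already yields $\chi_4(2B+D)$ with no division --- but solely from rewriting $A\equiv -C/2$ and $-B\equiv D/2$ using the coset congruences, which is the step you correctly invoke immediately afterwards.
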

\begin{proof}
Note that if $w\equiv 0\pmod{4}$, then
 \begin{equation}
\chi_4(\sigma_{\mathfrak{c}}\rho\sigma_{\infty}^{-1})=\chi_4(D),\quad \chi_4(\sigma_{\mathfrak{c}}\rho\sigma_{0}^{-1})=\chi_4(C).
\end{equation}
This is the case for all singular cusps of $\Gamma_0(16)$ and $\Gamma_0(64)$ except $\mathfrak{c}=1/1\sim 0$ and $\mathfrak{c}=1/2$.

For $\mathfrak{c}=1/1\sim 0$ all computations are exactly the same as in Corollary \ref{cor:gamma04}.

Consider $\mathfrak{c}=1/2$. It follows from \eqref{eq:charcoset1} that \begin{equation}
\chi_4(\sigma_{1/2}\rho\sigma_{\infty}^{-1})=\chi_4(2B+D).
\end{equation}
According to \eqref{eq:coset}
\begin{equation}
\begin{cases}
AD-BC=1\\
C\equiv -2A\pmod{N},
\end{cases}
\end{equation}
which implies that $A(D+2B)\equiv 1\pmod{N}$ and $\chi_4(\sigma_{1/2}\rho\sigma_{\infty}^{-1})=\chi_4(\overline{A}).$
Since $A\overline{A}\equiv 1\pmod{4}$ and $A\equiv -C/2\pmod{N/2}$ we have 
\begin{equation}
\chi_4(\sigma_{1/2}\rho\sigma_{\infty}^{-1})=\chi_4(\overline{A})=\chi_4(A)=\chi_4(-C/2).
\end{equation}
As a consequence of \eqref{eq:charcoset2} we find that
\begin{equation}
 \chi_4(\sigma_{1/2}\rho\sigma_{0}^{-1})=\chi_4(2A+C).
\end{equation}
According to \eqref{eq:coset}
\begin{equation}
\begin{cases}
AD-BC=1\\
D\equiv -2B\pmod{N}.
\end{cases}
\end{equation}
Therefore, $-B(2A+C)\equiv 1\pmod{N}$ and
\begin{equation}
\chi_4(\sigma_{1/2}\rho\sigma_{0}^{-1})=\chi_4(-\overline{B})=\chi_4(-B)=\chi_4(D/2).
\end{equation}

\end{proof}

\subsection{Fourier coefficients}

In this section we assume that $m$ is positive.
For the sake of brevity, we introduce the notation:
\begin{equation}
\delta_{n}(m):=\begin{cases}
1 \quad\text{if }n|m\\
0\quad \text{otherwise}
\end{cases},
\end{equation}
\begin{equation}\label{eq:sm}
s(m):=\frac{\sigma_{1-2s}(\chi_4;m)}{L(\chi_4,2s)},
\end{equation}
\begin{equation}\label{eq:tm}
t(m):=\frac{\tau(\chi_4)\sigma_{2s-1}(\chi_4;m)m^{1-2s}}{L(\chi_4,2s)},
\end{equation}
\begin{equation}
\sum_{a\pmod{m}}^{*}:=\sum_{\substack{a\pmod{m}\\(a,m)=1}}.
\end{equation}

In this section, we will frequently use the following lemma, which follows directly from the Chinese remainder theorem.
\begin{lem}\label{lem:Chineseremainder}
For $(m,n)=1$ we have
\begin{equation*}
\sum_{c\pmod{mn}}^{*}f(c)=\sum_{a\pmod{m}}^{*}\sum_{b\pmod{n}}^{*}f(an\overline{n}_m+bm\overline{m}_n),
\end{equation*}
where $n\overline{n}_m\equiv 1\pmod{m}$ and $m\overline{m}_n\equiv 1\pmod{n}$.
\end{lem}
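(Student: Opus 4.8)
\textbf{Proof proposal for Lemma \ref{lem:Chineseremainder}.}

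The plan is to realize the stated reindexing as an instance of the ring isomorphism furnished by the Chinese remainder theorem, together with its restriction to unit groups. First I would recall that for $(m,n)=1$ the reduction map
\begin{equation*}
\rho\colon \Z/mn\Z\longrightarrow \Z/m\Z\times\Z/n\Z,\qquad c\longmapsto (c\bmod m,\;c\bmod n)
\end{equation*}
is a ring isomorphism. The key step is to exhibit its inverse \emph{explicitly} in the form that appears in the lemma: set $\phi(a,b):=an\overline{n}_m+bm\overline{m}_n\bmod mn$, where $n\overline{n}_m\equiv 1\pmod m$ and $m\overline{m}_n\equiv 1\pmod n$. One checks directly that $\phi(a,b)\equiv an\overline{n}_m\equiv a\pmod m$, since $m\mid bm\overline{m}_n$ and $n\overline{n}_m\equiv 1\pmod m$; symmetrically $\phi(a,b)\equiv b\pmod n$. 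Hence $\rho\circ\phi=\mathrm{id}$, and since $\rho$ is a bijection between finite sets of the same cardinality, $\phi$ is its two-sided inverse.

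Next I would pass to units. Because $(m,n)=1$, we have $(c,mn)=1$ if and only if $(c,m)=1$ and $(c,n)=1$; moreover $c\equiv a\pmod m$ forces $(c,m)=(a,m)$, and likewise for $n$. Therefore $\phi$ restricts to a bijection
\begin{equation*}
\{a\bmod m:(a,m)=1\}\times\{b\bmod n:(b,n)=1\}\;\xrightarrow{\ \sim\ }\;\{c\bmod mn:(c,mn)=1\}.
\end{equation*}
Finally, since the notation $\sum^{*}_{c\pmod{mn}}$ presupposes that the summand depends only on the residue class of $c$ modulo $mn$, reindexing the sum along the bijection $c=\phi(a,b)=an\overline{n}_m+bm\overline{m}_n$ yields exactly the claimed identity.

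There is no real obstacle here: the statement is a bookkeeping consequence of the Chinese remainder theorem. The only two points that require a line of verification are (i) that the specific combination $an\overline{n}_m+bm\overline{m}_n$ is genuinely the CRT inverse of $\rho$, and (ii) that the coprimality condition $(c,mn)=1$ decouples correctly into $(a,m)=1$ and $(b,n)=1$; both are immediate from the congruences defining $\overline{n}_m$, $\overline{m}_n$ and from multiplicativity of the gcd.
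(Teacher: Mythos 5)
Your proof is correct and is exactly the argument the paper has in mind: the paper offers no written proof, merely asserting that the lemma ``follows directly from the Chinese remainder theorem,'' and your explicit construction of the CRT inverse $c=an\overline{n}_m+bm\overline{m}_n$ together with the decoupling of the coprimality condition supplies precisely the omitted details. Nothing further is needed.
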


Now we are ready to compute the Fourier coefficients \eqref{eq:coeffFourierEisent} for $N=4,16,64$ and $\mathfrak{a}=0, \infty$. 
We provide a complete proof for the case $N=64$ and $\mathfrak{a}=\infty$.  The other cases can be proved by the same manner.

%%%%%%%%%%%%%%%%%%%%%%%%%%%%%%%
\begin{lem}\label{lem:64inf}
Let $N=64$ and $\mathfrak{a}=\infty$. Then
\begin{equation}
\phi_{\infty,0}(m,s,\chi_4)=\chi_4(-1)\frac{s(m)}{8^{2s}},
\end{equation}
\begin{equation}
\phi_{\infty,1/2}(m,s,\chi_4)=\chi_4(-1)e\left(\frac{m}{2}\right)\frac{s(m)}{8^{2s}},
\end{equation}
\begin{equation}
\phi_{\infty,\frac{1}{4u}}(m,s,\chi_4)=\chi_4(-u)e\left(-\frac{mu}{4}\right)\frac{s(m)}{8^{2s}},\quad u=1,3,
\end{equation}
\begin{equation}\label{eq:64inf1/8u}
\phi_{\infty,\frac{1}{8u}}(m,s,\chi_4)=\chi_4(-u)e\left(-\frac{mu}{8}\right)\frac{s(m)}{8^{2s}},\quad u=1,3,5,7,
\end{equation}
\begin{equation}\label{eq:64inf1/16u}
\phi_{\infty,\frac{1}{16u}}(m,s,\chi_4)=\chi_4(-u)4\delta_{4}(m)e\left(-\frac{mu}{16}\right)\frac{s(m)}{16^{2s}},\quad u=1,3,
\end{equation}
\begin{equation}\label{eq:64infphi1/32}
\phi_{\infty,\frac{1}{32}}(m,s,\chi_4)=\frac{8}{(32)^{2s}}\delta_{8}(m)t\left(\frac{m}{8}\right)-\frac{16}{(64)^{2s}}\delta_{16}(m)t\left(\frac{m}{16}\right),
\end{equation}
\begin{equation}\label{eq:64infphi1/64}
\phi_{\infty,\infty}(m,s,\chi_4)=\frac{16}{(64)^{2s}}\delta_{16}(m)t\left(\frac{m}{16}\right).
\end{equation}
\end{lem}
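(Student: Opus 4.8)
The plan is to compute each Fourier coefficient $\phi_{\infty,\mathfrak{c}}(m,s,\chi_4)$ directly from the series representation
\[
\phi_{\infty,\mathfrak{c}}(m,s,\chi_4)=\sum_{c\in C_{\infty,\mathfrak{c}}(64)}\frac{S_{\infty\mathfrak{c}}(0,m;c;\chi_4)}{c^{2s}}
\]
given in \eqref{eq:coeffFourierEisent}, using the explicit description of the coset $\sigma_{\infty}^{-1}\Gamma\sigma_{\mathfrak{c}}$ from \eqref{eq:coset}, the character computations of Corollaries \ref{cor:gamma04} and \ref{cor:characters}, and the divisor-sum identities for $\sigma_s(\chi_4;\cdot)$ assembled in Section \ref{sec:prelim} (in particular the Euler-product evaluation \eqref{eq:zs} and the definitions \eqref{eq:sm}, \eqref{eq:tm}). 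For each cusp $\mathfrak{c}=1/w$ the allowed moduli $C_{\infty,\mathfrak{c}}(64)$ are read off from \cite[Eq. 2.15]{KY} (as was done in Section \ref{sect:diagnondiag} for $N=4,16,64$): they are multiples of a fixed scaling factor $c_0$ depending on $(w,N)$, so $c=c_0\,\gamma$ with $\gamma$ ranging over positive integers subject to a coprimality condition, and $c^{2s}=c_0^{2s}\gamma^{2s}$ pulls the factors $8^{2s}$, $16^{2s}$, $32^{2s}$, $64^{2s}$ out front.

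**Key steps.** First I would, for the generic cusps $\mathfrak{c}=1/w$ with $w\equiv 0\pmod 4$ (i.e. $\mathfrak{c}=0,1/4u,1/8u,1/16u$ and one branch of the proof), use \eqref{eq:charcoset1} in the form $\chi_4(\sigma_{\infty}\rho\sigma_{\mathfrak{c}}^{-1})=\chi_4(D)$ and evaluate the Kloosterman sum $S_{\infty\mathfrak{c}}(0,m;c;\chi_4)$ as an exponential sum over the residues $D$ coprime to the modulus, twisted by $\overline{\chi_4}(D)$, which is a Gauss-type sum $g(\overline{\chi_4};q;m)$. Applying Miyake's formula \eqref{eq:miyake2} for $g(\chi;q;m)$ converts this into a divisor sum $\sum_{d\mid(m,\ast)}d\,\chi_4(\ast)\mu(\ast)$, and summing the resulting Dirichlet series in $\gamma$ against $L(\chi_4,2s)^{-1}$ — exactly the manipulation performed in the diagonal-term computation \eqref{case:odd}–\eqref{case:even} of Lemma \ref{lem:diagonal} — collapses it to $\tau(\chi_4)$ or $\sigma_{1-2s}(\chi_4;m)/L(\chi_4,2s)=s(m)$; the scaling factor $c_0^{2s}$ gives the $8^{2s}$ (resp. $16^{2s}$) in the denominator, and the residue class of $w$ mod $c_0$ produces the phase $e(-mu/8)$ etc. together with the sign $\chi_4(-u)$ coming from $\chi_4(-1)$ and the unit $u$. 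For the cusp $\mathfrak{c}=1/2$ one uses instead $\chi_4(\sigma_{\infty}\rho\sigma_{1/2}^{-1})=\chi_4(-C/2)$ from Corollary \ref{cor:characters}, which changes the variable of summation from $D$ to $C/2$ but leaves the Gauss-sum structure intact, yielding the extra $e(m/2)$.

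**The remaining (and hardest) cases.** The cusps $\mathfrak{c}=1/32$ and $\mathfrak{c}=\infty=1/64$ are where $w$ is \emph{not} divisible by $4$ in the relevant sense and the character evaluation \eqref{eq:charcoset1} reduces to $\chi_4(wB+D)$ with a genuine two-variable constraint; here the Kloosterman sum $S_{\infty\infty}(0,m;c;\chi_4)$ becomes a Ramanujan-type sum twisted by $\chi_4$, and instead of $s(m)$ one gets the "opposite" completed divisor sum $t(m)=\tau(\chi_4)\sigma_{2s-1}(\chi_4;m)m^{1-2s}/L(\chi_4,2s)$, with the $\delta_8(m)$ and $\delta_{16}(m)$ divisibility conditions arising because the Gauss sum $g(\chi_4;q;m)$ vanishes unless $q/q^\ast\mid m$ and the allowed moduli are $c_0\gamma$ with $c_0=32$ or $64$. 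I expect the main obstacle to be bookkeeping the two terms in \eqref{eq:64infphi1/32}: the $1/32$-coefficient is a difference of a $32^{2s}$-term and a $64^{2s}$-term, which must come from separating the sum over allowed moduli $C_{\infty,1/32}(64)$ into the part where the modulus is exactly of the form forcing $8\mid m$ versus $16\mid m$ — essentially an inclusion–exclusion on the $2$-adic valuation of $\gamma$, paralleling the split $\sum_{(l,2)=1}=\sum_{l}-\sum_{l\equiv 0(2)}$ used at the end of the non-diagonal computation in Lemma on $M^{ND}$. Once that split is carried out carefully, \eqref{eq:64infphi1/64} is the leftover piece and \eqref{eq:64infphi1/32} is what remains, and the other listed equalities follow by the same template with different $c_0$ and $u$; the paper's remark that "the other cases can be proved in the same manner" is then justified.
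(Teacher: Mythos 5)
Your overall strategy --- computing each $\phi_{\infty,\mathfrak{c}}$ directly from the Dirichlet series of Kloosterman sums, using the explicit double-coset description, the character values of Corollary \ref{cor:characters}, and the divisor-sum identities --- is exactly the paper's route. But your account of the mechanism has a genuine gap that would derail the computation for the cusps $1/(fu)$ with $f=4,8,16$. For $\phi_{\infty,\mathfrak{c}}$ (i.e. $\mathfrak{a}=\infty$) the relevant coset is $\sigma_{\mathfrak{c}}^{-1}\Gamma\sigma_{\infty}$, not $\sigma_{\infty}^{-1}\Gamma\sigma_{\mathfrak{c}}$, and by \cite[Eq. 3.20]{KY} its representatives carry the constraint $D\equiv -u\,\overline{(C/f)}\pmod{(f,N/f)}$. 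That constraint is the whole point: it pins the value $\chi_4(D)=\chi_4(-u\overline{C})$, so the character factors \emph{out} of the inner sum, and what remains is not the Gauss sum $g(\overline{\chi_4};q;m)$ you propose to feed into Miyake's formula but a congruence-restricted Ramanujan sum. One evaluates it with Lemma \ref{lem:Chineseremainder}: the $2$-power block, restricted to $d_1\equiv -u\pmod{(f,N/f)}$, produces the phase $e(-mu/f)$ and, in the case $f=16$ (where $D$ runs mod $16C$ but is constrained only mod $4$), the factor $4\delta_4(m)$ of \eqref{eq:64inf1/16u}, which your mechanism does not account for; the odd block gives $\sum_{d\mid(m,C)}d\mu(C/d)$, whose Dirichlet series over $C$ yields $s(m)$. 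If you literally summed $\overline{\chi_4}(D)e(mD/q)$ over all $(D,q)=1$, you would get $t$-type terms at every cusp and lose the phases and the dependence on $u$. (Also, $\mathfrak{c}=0$ is $1/1$, so $w=1\not\equiv0\pmod4$; there the character is $\chi_4(-C)$, depending only on $C$, and again sits outside a plain Ramanujan sum.) Miyake's formula \eqref{eq:miyake2} genuinely enters only for $\mathfrak{c}=1/32$ and $\mathfrak{c}=\infty$, where the constraint on $D$ is modulo $2$ or $1$ and the inner sum really is $g(\chi_4;32C;m)$ or $g(\chi_4;64C;m)$.

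Second, your proposed origin of the two-term shape of \eqref{eq:64infphi1/32} is not how it arises. The allowed moduli for $\mathfrak{c}=1/32$ are simply $32C$ with $C$ odd; there is no inclusion--exclusion over $2$-adic valuations of the modulus. The computation outputs $\frac{8}{32^{2s}}\delta_8(m)\bigl(1-\delta_{16}(m)\bigr)t(m/8)$ directly --- the condition $16\nmid m$ comes from $\chi_4$ annihilating the even divisors in Miyake's divisor sum --- and the stated difference of a $32^{2s}$-term and a $64^{2s}$-term is just an algebraic rewriting of $\delta_8(1-\delta_{16})\,t(m/8)$ using $t(m/8)=2^{1-2s}t(m/16)$ for $16\mid m$. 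Likewise $\phi_{\infty,\infty}$ is not a ``leftover piece'' of that split but a separate computation with moduli $64C$. These points are fixable, but as written the proposal would not produce the stated formulas.
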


\begin{proof}
We need to evaluate \eqref{eq:coeffFourierEisent} for  $\mathfrak{a}=\infty$.  Consequently, \cite[Eq. 3.20]{KY} can be simplified as follows for $r=N=64$:
\begin{multline}
\Gamma_{\infty}\setminus\sigma_{\mathfrak{c}}^{-1}\Gamma\sigma_{\mathfrak{\infty}}/\Gamma_{\infty}=\\
\left\{
\begin{pmatrix}
*& *\\
C\sqrt{N''}&D\sqrt{N''}
\end{pmatrix}:
\begin{aligned}
 D\pmod{C}, \quad (D,C)=1, \quad (C,N)=f \\ D\equiv -\overline{(C/f)}u\pmod{(f,N/f)}
\end{aligned}
\right\}.
\end{multline}

%%%%%%%%%%%%%%%%%%%%%%%%%%%
If $\mathfrak{c}=0$, then  $\chi_4(\sigma_{0}\rho\sigma_{\infty}^{-1})=\chi_4(-C)$ by Corollary \ref{cor:characters},  and
$$f=1, \quad(f,N/f)=1,  \quad N''=64, \quad D\equiv -\overline{C}\pmod{1}.$$
As a result, we conclude that
\begin{equation}
\phi_{\infty,0}(m,s,\chi_4)=\sum_{(C,2)=1}\frac{\chi_4(-C)}{(8C)^{2s}}\sum_{D\pmod{C}}^{*}e\left( \frac{mD}{C}\right).
\end{equation}
The condition $(C,2)=1$ can be omitted since $\chi_4(-C)=0$ if this doesn't hold.
Furthermore, we use the following identity for the inner sum
\begin{equation}
\sum_{D\pmod{C}}^{*}e\left( \frac{mD}{C}\right)=\sum_{d|(m,C)}d\mu(C/d),
\end{equation}
and interchange the order of summations, getting
\begin{multline}
\phi_{\infty,0}(m,s,\chi_4)
=\frac{\chi_4(-1)}{8^{2s}}\sum_{d|m}d\sum_{C\equiv0\pmod{d}}\frac{\chi_4(C)}{C^{2s}}\mu(C/d)\\
=\frac{\chi_4(-1)}{8^{2s}}\sum_{d|m}\frac{\chi_4(d)}{d^{2s-1}}\sum_{C}\frac{\chi_4(C)}{C^{2s}}\mu(C)=\chi_4(-1)\frac{s(m)}{8^{2s}}.
\end{multline}
%%%%%%%%%%%%%%%%%%%%%%%%%%%%
If $\mathfrak{c}=\frac{1}{2}$, then  $\chi_4(\sigma_{1/2}\rho\sigma_{\infty}^{-1})=\chi_4(-\overline{C/2})$ by Corollary \ref{cor:characters}, and
$$f=2, \quad(f,N/f)=2,  \quad N''=16, \quad D\equiv -\overline{C/2}\pmod{2}.$$
This yields that
\begin{equation}
\phi_{\infty,\frac{1}{2}}(m,s,\chi_4)=\sum_{(C,2)=1}\frac{\chi_4(-C)}{(8C)^{2s}}\sum_{D\pmod{2C}}^{*}e\left( \frac{mD}{2C}\right).
\end{equation}
By Lemma \ref{lem:Chineseremainder} we have
\begin{multline}
\sum_{D\pmod{2C}}^{*}e\left( \frac{mD}{2C}\right)=\sum_{d_1\pmod{2}}^{*}\sum_{d_2(C)}^{*}e\left(\frac{md_1\overline{C}_2}{2} \right)e\left(\frac{md_2\overline{2}_C}{C} \right)\\
=e\left( \frac{m}{2}\right)\sum_{d_2(C)}^{*}e\left(\frac{md_2\overline{2}_C}{C} \right)=e\left( \frac{m}{2}\right)\sum_{d|(m,C)}d\mu(C/d).
\end{multline}
Consequently,
 \begin{equation}
\phi_{\infty,\frac{1}{2}}(m,s,\chi_4)=\chi_4(-1)e\left(\frac{m}{2}\right)\frac{s(m)}{8^{2s}}.
\end{equation}

%%%%%%%%%%%%%%%%%%%%%%%%%%
If $\mathfrak{c}=\frac{1}{4u}$ with $ u=1,3$, then  $\chi_4(\sigma_{0}\rho\sigma_{\infty}^{-1})=\chi_4(D)$ by Corollary \ref{cor:characters}, and
$$f=4, \quad(f,N/f)=4, \quad N''=4, \quad D\equiv -u\overline{C/4}\pmod{4}.$$
Therefore,
\begin{equation}
\phi_{\infty,\frac{1}{4u}}(m,s,\chi_4)=\sum_{(C,2)=1}\frac{\chi_4(-u\overline{C})}{(8C)^{2s}}\sum_{\substack{D\pmod{4C}\\D\equiv -u\overline{C}\pmod{4}}}^{*}e\left( \frac{mD}{4C}\right).
\end{equation}
Applying Lemma \ref{lem:Chineseremainder}, we infer
\begin{multline}
\sum_{\substack{D\pmod{4C}\\D\equiv -u\overline{C}\pmod{4}}}^{*}e\left( \frac{mD}{4C}\right)=
\sum_{\substack{d_1\pmod{4}\\ d_1\overline{C}_4\equiv -u\pmod{4} }}e\left(\frac{md_1\overline{C}_4}{4} \right)\\ \times
\sum_{d_2\pmod{C}}e\left(\frac{md_2}{C} \right)=e\left( -\frac{mu}{4}\right)\sum_{d|(m,C)}d\mu(C/d).
\end{multline}
This implies that for $ u=1,3$ we have
\begin{equation}
\phi_{\infty,\frac{1}{4u}}(m,s,\chi_4)=\chi_4(-u)e\left(-\frac{mu}{4}\right)\frac{s(m)}{8^{2s}}.
\end{equation}

%%%%%%%%%%%%%%%%%%%%%%%%%%

If $\mathfrak{c}=\frac{1}{8u}$ with $ u=1,3,5,7$, then  $\chi_4(\sigma_{0}\rho\sigma_{\infty}^{-1})=\chi_4(D)$  by Corollary \ref{cor:characters},  and
$$f=8, \quad(f,N/f)=8,  \quad N''=1, \quad D\equiv -u\overline{C/8}\pmod{8}.$$
Similarly to the previous cases
\begin{multline}
\phi_{\infty,\frac{1}{8u}}(m,s,\chi_4)=\sum_{(C,8)=1}\frac{\chi_4(-u\overline{C})}{(8C)^{2s}}\sum_{\substack{D\pmod{8C}\\D\equiv -u\overline{C}\pmod{8}}}^{*}e\left( \frac{mD}{8C}\right)
\\= \chi_4(-u)e\left( -\frac{mu}{8}\right)\sum_{C=1}^{\infty}\frac{\chi_4(C)}{(8C)^{2s}}\sum_{d|(m,C)}d\mu\left(\frac{C}{d} \right)\\ =\chi_4(-u)e\left(-\frac{mu}{8}\right)\frac{s(m)}{8^{2s}}.
\end{multline}
%%%%%%%%%%%%%%%%%%%%%%%%%%

If $\mathfrak{c}=\frac{1}{16u}$ with $ u=1,3$, then  $\chi_4(\sigma_{0}\rho\sigma_{\infty}^{-1})=\chi_4(D)$ by Corollary \ref{cor:characters}, and
$$f=16, \quad(f,N/f)=4, \quad N''=1, \quad D\equiv -u\overline{C/16}\pmod{4}.$$
From this we derive that
\begin{equation}
\phi_{\infty,\frac{1}{16u}}(m,s,\chi_4)=\sum_{(C,4)=1}\frac{\chi_4(-u\overline{C})}{(16C)^{2s}}\sum_{\substack{D\pmod{16C}\\D\equiv -u\overline{C}\pmod{4}}}^{*}e\left( \frac{mD}{16C}\right).
\end{equation}
The inner sum can be evaluated by applying Lemma \ref{lem:Chineseremainder} as follows
\begin{multline}
\sum_{\substack{D\pmod{16C}\\D\equiv -u\overline{C}\pmod{4}}}^{*}e\left( \frac{mD}{16C}\right)\\=
\sum_{\substack{d_1\pmod{16}\\d_1\equiv-u\overline{C}_4\pmod{4}}}^{*}\sum_{d_2\pmod{C}}^{*}
e\left(\frac{md_1\overline{C}_{16}}{16} \right)e\left(\frac{md_2\overline{16}_{C}}{C} \right)\\=
\sum_{\substack{d_1\pmod{16}\\Cd_1\equiv-u\overline{C}_4\pmod{4}}}^{*}
e\left(\frac{md_1}{16} \right)\sum_{d_2\pmod{C}}^{*}e\left(\frac{md_2}{C} \right).
\end{multline}
Note that $\overline{C}_4\equiv C\pmod{4}$, and therefore, the condition $Cd_1\equiv-u\overline{C}_4\pmod{4}$ can be replaced by $d_1\equiv -u\pmod{4}$.

Making the change of variables $d_3:=-u+4d_3$, $d_3\pmod{4}$ we infer
\begin{multline}
\sum_{\substack{d_1\pmod{16}\\d_1\equiv -u\pmod{4}}}e\left( \frac{md_1}{16}\right)=\sum_{d_3\pmod{4}}e\left(- \frac{mu}{16}\right)e\left( \frac{md_3}{4}\right)\\=4\delta_{4}(m)e\left(-\frac{mu}{16}\right).
\end{multline}
Consequently,
\begin{equation}
\phi_{\infty,\frac{1}{16u}}(m,s,\chi_4)=\chi_4(-u)4\delta_{4}(m)e\left(-\frac{mu}{16}\right)\frac{s(m)}{16^{2s}}.
\end{equation}

%%%%%%%%%%%%%%%%%%%%%%%%%%
If $\mathfrak{c}=\frac{1}{32}$, then $\chi_4(\sigma_{0}\rho\sigma_{\infty}^{-1})=\chi_4(D)$ by Corollary \ref{cor:characters},  and
$$f=32, \quad(f,N/f)=2, \quad N''=1, \quad D\equiv -u\overline{C/32}\pmod{2}.$$
In these settings
\begin{equation}
\phi_{\infty,\frac{1}{32}}(m,s,\chi_4)=\sum_{(C,2)=1}\frac{1}{(32C)^{2s}}\sum_{D\pmod{32C}}^{*}\chi_4(D)e\left( \frac{mD}{32C}\right),
\end{equation}
where the inner sum is the Gauss sum $g(\chi_4;32C;m)$ defined by \eqref{eq:miyake}. Then it follows from  \eqref{eq:miyake2} that
\begin{multline*}
\phi_{\infty,\frac{1}{32}}(m,s,\chi_4)=\frac{\tau(\chi_4)}{(32)^{2s}}\sum_{d|m}d\chi_4\left(\frac{m}{d}\right)\sum_{\substack{(C,2)=1\\C\equiv0\pmod{d/(8,d)}}}
\frac{\chi_4\left( \frac{8C}{d}\right)\mu\left( \frac{8C}{d}\right)}{C^{2s}}\\
=\frac{\tau(\chi_4)}{(32)^{2s}}\sum_{\substack{d|m\\(d/(8,d),2)=1}}d\chi_4\left(\frac{m}{d}\right)\left(\frac{(8,d)}{d} \right)^{2s}\sum_{(C,2)=1}
\frac{\chi_4\left( \frac{8C}{(8,d)}\right)\mu\left( \frac{8C}{(8,d)}\right)}{C^{2s}}.
\end{multline*}
Since $\chi_4\left( \frac{8C}{(8,d)}\right)=0$ unless $(8,d)=8$, the expression above simplifies to
\begin{multline*}
\phi_{\infty,\frac{1}{32}}(m,s,\chi_4)=\frac{\tau(\chi_4)}{(32)^{2s}}\sum_{\substack{d|m\\d\equiv 0\pmod{8}\\(d/8,2)=1}}d\chi_4\left(\frac{m}{d} \right)\left(\frac{8}{d} \right)^{2s}\sum_{(C,2)=1}\frac{\chi_4(C)\mu(C)}{C^{2s}}\\
=\frac{\tau(\chi_4)}{(32)^{2s}L(\chi_4,2s)}8\delta_8(m)\sum_{\substack{d|\frac{m}{8}\\(d,2)=1}}d^{1-2s}\chi_4\left(\frac{m/8}{d}\right).
\end{multline*}
If $m/8$ is odd then the condition $(d,2)=1$ can be removed, and if $m/8$ is even we have $\chi_4\left( \frac{m/8}{d}\right)=0$ for $d$ odd. Consequently,
\begin{equation}\label{eq:1-delta16}
\phi_{\infty,\frac{1}{32}}(m,s,\chi_4)=\frac{8}{(32)^{2s}}\delta_{8}(m)(1-\delta_{16}(m))t\left(\frac{m}{8}\right).
\end{equation}

For $m \equiv 0 \pmod{16}$ the following identity holds
\begin{equation}
\left( \frac{m}{8}\right)^{1-2s}\sigma_{2s-1}(\chi_4;m/8)=2^{1-2s}\left( \frac{m}{16}\right)^{1-2s}\sigma_{2s-1}(\chi_4;m/16),
\end{equation}
and therefore,
\begin{equation}
\phi_{\infty,\frac{1}{32}}(m,s,\chi_4)=\frac{8}{(32)^{2s}}\delta_{8}(m)t\left(\frac{m}{8}\right)-\frac{16}{(64)^{2s}}\delta_{16}(m)t\left(\frac{m}{16}\right).
\end{equation}
%%%%%%%%%%%%%%%%%%%%%%%%%%

If $\mathfrak{c}=\frac{1}{64}$, then  $\chi_4(\sigma_{0}\rho\sigma_{\infty}^{-1})=\chi_4(D)$ by Corollary \ref{cor:characters}, and
$$f=64, \quad(f,N/f)=1,  \quad N''=1, \quad D\equiv -u\overline{C/64}\pmod{1}.$$
This implies that
\begin{equation}
\phi_{\infty,\infty}(m,s,\chi_4)=\sum_{C}\frac{1}{(64C)^{2s}}\sum_{D\pmod{64C}}^{*}\chi_4(D)e\left( \frac{mD}{64C}\right).
\end{equation}

The inner sum is the Gauss sum $g(\chi_4;64C;m)$ defined by \eqref{eq:miyake}. Applying the representation \eqref{eq:miyake2} for this sum, we have

\begin{multline*}
\phi_{\infty,\infty}(m,s,\chi_4)=\frac{\tau(\chi_4)}{(64)^{2s}}\sum_{C=1}^{\infty}\frac{1}{C^{2s}}\sum_{d|(m,16C)}d\chi_4\left(\frac{16C}{d}\right)\chi_4\left(\frac{n}{d} \right)\mu\left( \frac{16C}{q}\right)\\
=\frac{\tau(\chi_4)}{(64)^{2s}}\sum_{d|m}d\chi_4\left( \frac{m}{d}\right)\left(\frac{(16,d)}{d} \right)^{2s}\sum_{C=1}^{\infty}\frac{\chi_4\left( \frac{16C}{(16,d)}\right)\mu\left( \frac{16C}{(16,d)}\right)}{C^{2s}}.
\end{multline*}
We remark that $\chi_4\left( \frac{16C}{(16,d)}\right)=0$ unless $\frac{(16,d)}{d} =1$. Therefore, we can assume that $d\equiv 0\pmod{16}$ and 
\begin{multline*}
\phi_{\infty,\infty}(m,s,\chi_4)=\frac{\tau(\chi_4)}{(64)^{2s}}\sum_{\substack{d|m\\d\equiv 0\pmod{16}}}d\chi_4\left( \frac{m}{d}\right)\left(\frac{16}{d} \right)^{2s}\sum_{C=1}^{\infty}\frac{\chi_4\left(C\right)\mu\left( C\right)}{C^{2s}}\\
=\frac{\tau(\chi_4)}{(64)^{2s}}\delta_{16}(m)\sum_{d|\frac{m}{16}}\chi_4\left(\frac{m/16}{d} \right)\frac{16d^{-2s+1}}{L(\chi_4,2s)}
=\frac{16}{(64)^{2s}}\delta_{16}(m)t\left(\frac{m}{16}\right).
\end{multline*}

%%%%%%%%%%%%%%%%%%%%%%%%%%

\end{proof}

%%%%%%%%%%%%%%%%%%%%%%%%%%%%%%%%%%%%%

\begin{lem}\label{lem:640}
Let $N=64$ and $\mathfrak{a}=0$. Then
\begin{equation}
\phi_{0,0}(m,s,\chi_4)=\frac{16}{(64)^{2s}}\delta_{16}(m)t\left(\frac{m}{16}\right),
\end{equation}
\begin{equation}\label{eq:6401/2}
\phi_{0,1/2}(m,s,\chi_4)=\frac{8}{(32)^{2s}}\delta_{8}(m)t\left(\frac{m}{8}\right)-\frac{16}{(64)^{2s}}\delta_{16}(m)t\left(\frac{m}{16}\right),
\end{equation}
\begin{equation}
\phi_{0,\frac{1}{4u}}(m,s,\chi_4)=4\delta_{4}(m)e\left(\frac{mu}{16}\right)\frac{s(m)}{16^{2s}},\quad u=1,3,
\end{equation}
\begin{equation}\label{eq:6401/8u}
\phi_{0,\frac{1}{8u}}(m,s,\chi_4)=e\left(\frac{mu}{8}\right)\frac{s(m)}{8^{2s}},\quad u=1,3,5,7,
\end{equation}
\begin{equation}\label{eq:6401/16u}
\phi_{0,\frac{1}{16u}}(m,s,\chi_4)=e\left(\frac{mu}{4}\right)\frac{s(m)}{8^{2s}},\quad u=1,3,
\end{equation}
\begin{equation}
\phi_{0,\frac{1}{32}}(m,s,\chi_4)=e\left(\frac{m}{2}\right)\frac{s(m)}{8^{2s}},
\end{equation}
\begin{equation}\label{eq:0infphi1/64}
\phi_{0,\infty}(m,s,\chi_4)=\frac{s(m)}{8^{2s}}.
\end{equation}
\end{lem}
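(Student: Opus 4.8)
The plan is to follow exactly the template established in the proof of Lemma \ref{lem:64inf}, but now with $\mathfrak{a}=0$ in place of $\mathfrak{a}=\infty$. The starting point is the formula \eqref{eq:coeffFourierEisent}, which expresses $\phi_{0,\mathfrak{c}}(m,s,\chi_4)$ as a sum over the coset $\Gamma_\infty\setminus\sigma_{\mathfrak{c}}^{-1}\Gamma\sigma_0/\Gamma_\infty$; using \cite[Eq.~3.20]{KY} with $r=1$, $s=N=64$ one obtains a parametrization of this coset by pairs $(C,D)$ with $D \pmod C$, $(D,C)=1$, a divisibility condition $(C,N)=f$, and a congruence $D\equiv -\overline{(C/f)}\,u\pmod{(f,N/f)}$ coming from the choice of cusp $\mathfrak{c}=1/w$. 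For each singular cusp $\mathfrak{c}$ in the list from Lemma \ref{lem:singularcusps} I would first read off the relevant character value $\chi_4(\sigma_{\mathfrak{c}}\rho\sigma_0^{-1})$ from Corollary \ref{cor:characters} (and Corollary \ref{cor:gamma04} for $\mathfrak{c}=0$), then write $\phi_{0,\mathfrak{c}}(m,s,\chi_4)$ as an outer sum over $C$ times an inner exponential sum over $D$.

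The inner sum over $D$ is always handled the same way: when there is no residual character it is the Ramanujan-type sum $\sum_{D\pmod C}^{*}e(mD/C)=\sum_{d|(m,C)}d\,\mu(C/d)$, and when a congruence condition on $D$ is present one applies Lemma \ref{lem:Chineseremainder} to split modulo $(f,N/f)$ and modulo $C$, producing a phase factor $e(\pm mu/(f,N/f))$ (possibly with an extra $\delta$-factor, as in the $\mathfrak{c}=1/16u$ case of Lemma \ref{lem:64inf}, when the splitting modulus exceeds $(f,N/f)$) times the same Ramanujan sum. After interchanging the order of summation, the remaining sum over $C$ collapses to $\sum_{d|m}\chi_4(d)d^{1-2s}\big/L(\chi_4,2s)\cdot(\text{normalization})$, giving $s(m)$ as defined in \eqref{eq:sm}. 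For the two cusps where the residual character is $\chi_4(D)$ itself (here these are $\mathfrak{c}=1/32$ and $\mathfrak{c}=1/64\sim\infty$, mirroring $\mathfrak{c}=1/32,\,1/64$ in Lemma \ref{lem:64inf} but with the roles of $0$ and $\infty$ swapped), the inner sum is instead the Gauss sum $g(\chi_4;\text{(modulus)}\,C;m)$, which one evaluates by \eqref{eq:miyake2}; this forces $d\equiv 0$ modulo a power of $2$, introduces $\tau(\chi_4)$ and $\delta_8$ or $\delta_{16}$, and yields $t(m/8)$ or $t(m/16)$ as in \eqref{eq:tm}, after using the relation $\big(\frac m8\big)^{1-2s}\sigma_{2s-1}(\chi_4;m/8)=2^{1-2s}\big(\frac m{16}\big)^{1-2s}\sigma_{2s-1}(\chi_4;m/16)$ for $16\mid m$ to rewrite $\delta_8(m)(1-\delta_{16}(m))t(m/8)$ in the stated form.

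The only genuine bookkeeping difficulty — and the main place an error could creep in — is keeping straight, for each of the twelve cusps, the quadruple $(f,(f,N/f),N'',$ congruence on $D)$ together with the correct character value from Corollary \ref{cor:characters}, and in particular tracking which cusps produce the "$s$-type" answer and which the "$t$-type" answer. Since $\mathfrak{a}=0$ rather than $\mathfrak{a}=\infty$, the relation $\frac{1-\overline{s}s}{r}=1-N$ (rather than $0$) enters through \eqref{eq:charcoset2}, which is precisely why the outcomes are permuted relative to Lemma \ref{lem:64inf}; one must check that the normalizing powers of the moduli ($8^{2s}$, $16^{2s}$, $32^{2s}$, $64^{2s}$) and the phase arguments ($mu/16$, $mu/8$, $mu/4$, $m/2$) come out consistently. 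Everything else is a direct repetition of the computations already carried out in detail in the proof of Lemma \ref{lem:64inf}, so I would simply say that the remaining cases follow in the same manner and write out one representative case (say $\mathfrak{c}=1/8u$, giving \eqref{eq:6401/8u}) in full to fix the pattern.
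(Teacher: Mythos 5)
Your overall strategy coincides with the paper's: the paper gives no separate proof of Lemma \ref{lem:640} and simply asserts that the remaining cases ``can be proved by the same manner'' as Lemma \ref{lem:64inf}, which is exactly the template you describe. However, your plan contains one concrete misidentification that would derail the computation at four of the twelve cusps. You assert that the cusps requiring the Gauss-sum treatment (residual character depending on $D$, hence a $t$-type answer) are $\mathfrak{c}=1/32$ and $\mathfrak{c}=1/64\sim\infty$. For $\mathfrak{a}=0$ this is wrong: by Corollary \ref{cor:characters}, $\chi_4(\sigma_{\mathfrak{c}}\rho\sigma_{0}^{-1})$ equals $\chi_4(D)$ when $\mathfrak{c}=0$ and $\chi_4(D/2)$ when $\mathfrak{c}=1/2$, while for every other singular cusp of $\Gamma_0(64)$ --- including $1/32$ and $\infty$ --- it equals $\chi_4(C)$, so there the inner sum over $D$ is a plain Ramanujan sum and the answer is $s$-type. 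This is consistent with the statement being proved: the $t$-type formulas are \eqref{eq:6401/2} and the one for $\phi_{0,0}$, whereas $\phi_{0,1/32}$ and $\phi_{0,\infty}$ in \eqref{eq:0infphi1/64} carry $s(m)$. Carried out as written, your plan would place the Gauss sums $g(\chi_4;\cdot;m)$ --- and hence the $\tau(\chi_4)$, $\delta_8$, $\delta_{16}$, $t(\cdot)$ factors --- at the wrong cusps. This is precisely the bookkeeping trap you flagged yourself; the fix is to read the character values for $\mathfrak{a}=0$ directly from Corollary \ref{cor:characters} rather than transporting the $\mathfrak{a}=\infty$ assignment.

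A secondary point worth making explicit before claiming the rest ``follows in the same manner'': for $\mathfrak{a}=0$ the entries of $\sigma_{\mathfrak{c}}^{-1}\Gamma\sigma_{0}$ carry the extra factor $\sqrt{s}=\sqrt{64}$ from \eqref{eq:coset}, so the moduli, the normalizing powers and the phase/$\delta$-factors are not read off from $(f,N/f)$ in the same way as in Lemma \ref{lem:64inf}. This rescaling is why, for instance, $\phi_{0,1/(4u)}$ acquires the factor $4\delta_4(m)e(mu/16)/16^{2s}$ that $\phi_{\infty,1/(16u)}$ had, and why $\phi_{0,1/(16u)}$ instead looks like $\phi_{\infty,1/(4u)}$. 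Your plan gestures at this permutation but does not pin it down; doing so is the substance of the verification.
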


\begin{lem}
Let $N=16$ and $\mathfrak{a}=0$. Then
\begin{equation}\label{eq:1600}
\phi_{0,0}(m,s,\chi_4)=\frac{4}{(16)^{2s}}\delta_{4}(m)t\left(\frac{m}{4}\right),
\end{equation}
\begin{equation}\label{eq:1601/2}
\phi_{0,1/2}(m,s,\chi_4)=\frac{2}{8^{2s}}\delta_{2}(m)t\left(\frac{m}{2}\right)-\frac{4}{(16)^{2s}}\delta_{4}(m)t\left(\frac{m}{4}\right),
\end{equation}
\begin{equation}\label{eq:1601/4u}
\phi_{0,\frac{1}{4u}}(m,s,\chi_4)=e\left(\frac{mu}{4}\right)\frac{s(m)}{4^{2s}},\quad u=1,3,
\end{equation}
\begin{equation}\label{eq:1601/8}
\phi_{0,\frac{1}{8}}(m,s,\chi_4)=e\left(\frac{m}{2}\right)\frac{s(m)}{4^{2s}},
\end{equation}
\begin{equation}\label{eq:160inf}
\phi_{0,\infty}(m,s,\chi_4)=\frac{s(m)}{4^{2s}}.
\end{equation}
\end{lem}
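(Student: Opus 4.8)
The plan is to repeat, essentially verbatim, the argument proving Lemma~\ref{lem:64inf}, now with $N=16$ and the Atkin--Lehner cusp $\mathfrak{a}=0$. The starting point is formula \eqref{eq:coeffFourierEisent},
\begin{equation*}
\phi_{0,\mathfrak{c}}(m,s,\chi_4)=\sum_{c\in C_{\mathfrak{c},0}(16)}\frac{S_{\mathfrak{c}0}(0,m;c;\chi_4)}{c^{2s}},
\end{equation*}
in which one unfolds the Kloosterman sum using the parametrisation of $\Gamma_\infty\backslash\sigma_{\mathfrak{c}}^{-1}\Gamma\sigma_0/\Gamma_\infty$ from \cite[Eq.~3.20]{KY}, specialised to $\mathfrak{a}=0$ just as it was specialised to $\mathfrak{a}=\infty$ in the proof of Lemma~\ref{lem:64inf}. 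By Lemma~\ref{lem:singularcusps} the cusp $\mathfrak{c}$ runs over $0,1/2,1/4,1/8,1/12,\infty$, so there are six cases; for each I would record the scaling data --- the factor $f$, the modulus $(f,16/f)$, the integer $N''$ and the congruence imposed on the entry $D$ of the representing matrix --- exactly as in the worked example, and insert the value of $\chi_4(\sigma_{\mathfrak{c}}\rho\sigma_0^{-1})$ (with $\rho$ a representative of the double coset) supplied by Corollary~\ref{cor:characters}: this equals $\chi_4(D)$ for $\mathfrak{c}=0$, $\chi_4(D/2)$ for $\mathfrak{c}=1/2$, and $\chi_4(C)$ for $\mathfrak{c}\in\{1/4,1/8,1/12,\infty\}$.

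For the four cusps with character $\chi_4(C)$ the sum over $D$ carries no multiplicative character, and after restricting it to the relevant residue class it is evaluated by Lemma~\ref{lem:Chineseremainder}: it becomes a root of unity times the Ramanujan-type sum $\sum_{d\mid(m,C)}d\,\mu(C/d)$, and for $N=16$ with $\mathfrak{a}=0$ the residue restriction leaves no leftover complete exponential sum, so no Iverson factor arises (in contrast with the $1/(16u)$ computation inside Lemma~\ref{lem:64inf}). Interchanging the order of summation and summing the $\chi_4$-twisted Dirichlet series in $C$ --- using $\sum_C\chi_4(C)\mu(C)C^{-2s}=L(\chi_4,2s)^{-1}$, which is already built into the notation $s(m)$ of \eqref{eq:sm} --- then yields $e(mu/4)\,s(m)/4^{2s}$, $e(m/2)\,s(m)/4^{2s}$ and $s(m)/4^{2s}$, which are exactly \eqref{eq:1601/4u}, \eqref{eq:1601/8} and \eqref{eq:160inf}.

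For $\mathfrak{c}=0$ and $\mathfrak{c}=1/2$ the sum over $D$ is a genuine Gauss sum --- essentially of modulus $16C$ for $\mathfrak{c}=0$, and (after one further Chinese-remainder reduction) essentially of modulus $8C$ for $\mathfrak{c}=1/2$ --- which I would evaluate with \eqref{eq:miyake2}. Since $\chi_4$ is induced from the primitive character modulo $4$ and vanishes on even arguments, the divisor $d$ in \eqref{eq:miyake2} is forced to absorb the full power of $2$ in the modulus, which imposes $4\mid m$ for $\mathfrak{c}=0$ and $2\mid m$ for $\mathfrak{c}=1/2$ and thus produces the factors $\delta_4(m)$ and $\delta_2(m)$; the surviving twisted divisor sum over $m/4$, respectively $m/2$, is exactly $t(m/4)$, respectively $t(m/2)$, in the notation \eqref{eq:tm}. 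The cusp $\mathfrak{c}=0$ gives \eqref{eq:1600} at once. For $\mathfrak{c}=1/2$ the congruence modulo $(f,16/f)=2$ leaves, besides the main term $\frac{2}{8^{2s}}\delta_2(m)t(m/2)$, a part supported on $4\mid m$ which, by the elementary identity $(m/2)^{1-2s}\sigma_{2s-1}(\chi_4;m/2)=2^{1-2s}(m/4)^{1-2s}\sigma_{2s-1}(\chi_4;m/4)$ valid for $4\mid m$, rewrites as $-\frac{4}{(16)^{2s}}\delta_4(m)t(m/4)$, giving \eqref{eq:1601/2}; this is the exact analogue of the passage from \eqref{eq:1-delta16} to \eqref{eq:64infphi1/32}. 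The only genuine difficulty is the $2$-adic bookkeeping --- pinning down, cusp by cusp, the factor $f$, the congruence modulus $(f,16/f)$, the resulting $\delta$-factors and the powers $4^{2s},8^{2s},16^{2s}$ --- but no new idea is needed beyond what is carried out in full for $N=64$, $\mathfrak{a}=\infty$, so I would present only the case-by-case modifications rather than repeat the computation.
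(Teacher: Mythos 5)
Your proposal is correct and follows precisely the route the paper intends: the paper works out only the case $N=64$, $\mathfrak{a}=\infty$ in full and states that the remaining cases ``can be proved by the same manner,'' which is exactly the adaptation you describe. Your bookkeeping — the character values from Corollary \ref{cor:characters}, the Gauss-sum evaluation via \eqref{eq:miyake2} producing $\delta_4(m)t(m/4)$ and $\delta_2(m)t(m/2)$ for $\mathfrak{c}=0,1/2$, the Ramanujan sums giving $s(m)/4^{2s}$ for the remaining cusps, and the two-term rewriting of $\phi_{0,1/2}$ parallel to the passage from \eqref{eq:1-delta16} to \eqref{eq:64infphi1/32} — is consistent with all five stated formulas.
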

\begin{lem}
Let $N=16$ and $\mathfrak{a}=\infty$. Then
\begin{equation}\label{eq:16inf0}
\phi_{\infty,0}(m,s,\chi_4)=\chi_4(-1)\phi_{0,\infty}(m,s,\chi_4)=\chi_4(-1)\frac{s(m)}{4^{2s}},
\end{equation}
\begin{equation}\label{eq:16inf1/2}
\phi_{\infty,1/2}(m,s,\chi_4)=\chi_4(-1)\phi_{0,\frac{1}{8}}(m,s,\chi_4)=\chi_4(-1)e\left(\frac{m}{2}\right)\frac{s(m)}{4^{2s}},
\end{equation}
\begin{multline}\label{eq:16inf1/4u}
\phi_{\infty,\frac{1}{4u}}(m,s,\chi_4)=\chi_4(-u)\phi_{0,\frac{1}{4(-u)}}(m,s,\chi_4)\\=\chi_4(-u)e\left(-\frac{mu}{4}\right)\frac{s(m)}{4^{2s}},\quad u=1,3,
\end{multline}
\begin{multline}\label{eq:16inf1/8}
\phi_{\infty,\frac{1}{8}}(m,s,\chi_4)=\phi_{0,1/2}(m,s,\chi_4)\\=\frac{2}{8^{2s}}\delta_{2}(m)t\left(\frac{m}{2}\right)-\frac{4}{(16)^{2s}}\delta_{4}(m)t\left(\frac{m}{4}\right),
\end{multline}
\begin{equation}\label{eq:16infinf}
\phi_{\infty,\infty}(m,s,\chi_4)=\phi_{0,0}(m,s,\chi_4)=\frac{4}{(16)^{2s}}\delta_{4}(m)t\left(\frac{m}{4}\right).
\end{equation}
\end{lem}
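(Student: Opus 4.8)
The plan is to argue exactly as in the proof of Lemma~\ref{lem:64inf}, now with $N=16$ and $\mathfrak{a}=\infty$, and then to obtain the five asserted identities by matching the outcome against the formulas \eqref{eq:1600}--\eqref{eq:160inf} already established for $\mathfrak{a}=0$. By Lemma~\ref{lem:singularcusps} the singular cusps of $\Gamma_0(16)$ are $0,\,1/2,\,1/4,\,1/8,\,1/12$ and $\infty=1/16$. For each cusp $\mathfrak{c}=1/w$ I would first specialize \cite[Eq. 3.20]{KY} with $r=N=16$ to describe $\Gamma_{\infty}\backslash\sigma_{\mathfrak{c}}^{-1}\Gamma\sigma_{\infty}/\Gamma_{\infty}$ by pairs $(C,D)$ with $(D,C)=1$, $(C,16)=f$, and $D\equiv-\overline{(C/f)}u\pmod{(f,N/f)}$, reading off from $w$ the data $(f,(f,N/f),N'')$: namely $(1,1,16)$ for $w=1$, $(2,2,4)$ for $w=2$, $(4,4,1)$ for $w=4$, $(8,2,1)$ for $w=8$, and $(16,1,1)$ for $w=16$. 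The character $\chi_4(\sigma_{\mathfrak{c}}\rho\sigma_{\infty}^{-1})$ is then supplied by Corollary~\ref{cor:characters}: it equals $\chi_4(-C)$ at $\mathfrak{c}=0$, equals $\chi_4(-C/2)$ at $\mathfrak{c}=1/2$, and equals $\chi_4(D)$ at $\mathfrak{c}=1/4,\,1/8,\,1/12$ and $\infty$.

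With this in hand, each $\phi_{\infty,\mathfrak{c}}(m,s,\chi_4)$ becomes a double sum over $C$ and $D$, and the inner sum over $D$ is evaluated in one of two ways, just as in Lemma~\ref{lem:64inf}. For the cusps $0,\,1/2,\,1/4,\,1/12$ the congruence condition on $D$ together with the explicit form of the character renders $\chi_4$ constant on the range of summation, so that, after peeling off the power-of-$2$ part via Lemma~\ref{lem:Chineseremainder}, the inner sum collapses to a Ramanujan sum $\sum_{d\mid(m,C)}d\,\mu(C/d)$ times a root of unity; summing over the odd $C$ against $\chi_4(C)C^{-2s}$ and recognising the Euler product for $L(\chi_4,2s)^{-1}$ produces the function $s(m)$ of \eqref{eq:sm} and yields \eqref{eq:16inf0}, \eqref{eq:16inf1/2}, \eqref{eq:16inf1/4u}. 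For the cusps $1/8$ and $\infty$ the inner sum is instead a genuine Gauss sum $g(\chi_4;q;m)$ with $q$ a multiple of $8$ (resp.\ $16$), which is evaluated with \eqref{eq:miyake2}; the divisor condition forces a factor $\delta_{2}(m)$ or $\delta_{4}(m)$, and simplifying with $(m/2)^{1-2s}\sigma_{2s-1}(\chi_4;m/2)=2^{1-2s}(m/4)^{1-2s}\sigma_{2s-1}(\chi_4;m/4)$ for $m\equiv0\pmod4$ turns the answer into the function $t(m)$ of \eqref{eq:tm}, giving \eqref{eq:16inf1/8} and \eqref{eq:16infinf}.

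Finally, comparing these five expressions with \eqref{eq:1600}--\eqref{eq:160inf} gives the claimed identifications $\phi_{\infty,0}=\chi_4(-1)\phi_{0,\infty}$, $\phi_{\infty,1/2}=\chi_4(-1)\phi_{0,1/8}$, $\phi_{\infty,1/(4u)}=\chi_4(-u)\phi_{0,1/(4(-u))}$ for $u=1,3$, $\phi_{\infty,1/8}=\phi_{0,1/2}$ and $\phi_{\infty,\infty}=\phi_{0,0}$; conceptually, these relations express the Fricke involution $W_{16}=\left(\begin{smallmatrix}0&-1\\16&0\end{smallmatrix}\right)$, which normalises $\Gamma_0(16)$ and interchanges the cusps $\{0,\infty\}$, $\{1/2,1/8\}$ and $\{1/4,1/12\}$. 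I expect the only real difficulty to be the bookkeeping: correctly extracting $(f,(f,N/f),N'')$ and the residue class of $D$ for every cusp, and keeping track of the $\delta$-factors and the root-of-unity phases in the two regimes of the inner sum. This is entirely routine but error-prone, and it is the step where care is needed.
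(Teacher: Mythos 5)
Your proposal is correct and follows essentially the same route as the paper: the paper gives no separate proof for this lemma, stating only that the cases other than $N=64$, $\mathfrak{a}=\infty$ "can be proved by the same manner," and your plan carries out exactly that computation (cusp data from \cite[Eq. 3.20]{KY}, characters from Corollary \ref{cor:characters}, Ramanujan sums for the cusps $0,1/2,1/(4u)$ and Gauss sums via \eqref{eq:miyake2} for $1/8$ and $\infty$), with all the numerical data $(f,(f,N/f),N'')$ you list being correct. The closing remark identifying the resulting coincidences with the action of the Fricke involution is a pleasant conceptual addition not present in the paper, but the lemma itself only asserts the equalities of the explicit formulas, which your computation delivers directly.
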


\begin{lem}
Let $N=4$. Then
\begin{equation}\label{eq:400}
\phi_{0,0}(m,s,\chi_4)=\frac{t(m)}{4^{2s}},\quad \phi_{0,\infty}(m,s,\chi_4)=\frac{s(m)}{2^{2s}},
\end{equation}
\begin{equation}\label{eq:4inf0}
\phi_{\infty,0}(m,s,\chi_4)=\chi_4(-1)\phi_{0,\infty}(m,s,\chi_4), 
\end{equation}
\begin{equation}\label{eq:4infinf}
 \phi_{\infty,\infty}(m,s,\chi_4)=\phi_{0,0}(m,s,\chi_4).
\end{equation}

\end{lem}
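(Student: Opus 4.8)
The argument follows the template of Lemma~\ref{lem:64inf}, and is short here since by Lemma~\ref{lem:singularcusps} the group $\Gamma_0(4)$ has only the two singular cusps $0$ and $\infty$, both of Atkin--Lehner type. The plan is to evaluate each of $\phi_{\infty,0}$, $\phi_{\infty,\infty}$, $\phi_{0,0}$, $\phi_{0,\infty}$ directly from \eqref{eq:coeffFourierEisent}; once all four are in closed form, \eqref{eq:400}, \eqref{eq:4inf0} and \eqref{eq:4infinf} follow by inspection. Throughout, all characters are real, so $\overline{\chi_4}=\chi_4$.

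For $\mathfrak{a}=\infty$ I would insert the description of $\Gamma_\infty\setminus\sigma_{\mathfrak c}^{-1}\Gamma\sigma_\infty/\Gamma_\infty$ from \cite[Eq.~3.20]{KY}, as in the proof of Lemma~\ref{lem:64inf}, now with $N=4$. For $\mathfrak c=0=1/1$ one has $f=1$, $N''=4$, and $(f,N/f)=1$, so the congruence on the lower-right entry is vacuous and the twist is $\chi_4(-C)$ by Corollary~\ref{cor:gamma04}; the series collapses to $\sum_{(C,2)=1}\chi_4(-C)\,c_C(m)/(2C)^{2s}$ with $c_C(m)=\sum_{d\mid(m,C)}d\,\mu(C/d)$ the Ramanujan sum, and interchanging summations gives $\chi_4(-1)\sigma_{1-2s}(\chi_4;m)/(2^{2s}L(\chi_4,2s))=\chi_4(-1)s(m)/2^{2s}$ by \eqref{eq:sm}. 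For $\mathfrak c=\infty=1/4$ one has $f=4$, $N''=1$, again a vacuous congruence, twist $\chi_4(D)$, and the inner sum is the Gauss sum $g(\chi_4;C;m)$ with $4\mid C$; applying \eqref{eq:miyake2} with inducing character $\chi_4$ of conductor $4$ and summing over $C$ produces $\tau(\chi_4)\,m^{1-2s}\sigma_{2s-1}(\chi_4;m)/(4^{2s}L(\chi_4,2s))=t(m)/4^{2s}$ by \eqref{eq:tm}.

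For $\mathfrak a=0$ I would repeat the computation using the double coset \eqref{eq:coset} for the Atkin--Lehner cusp $\mathfrak a=0=1/1$ (so $r=1$, $s=N=4$) together with the characters \eqref{eq:charcoset1}, \eqref{eq:charcoset2}, which give $\chi_4(C)$ for $\mathfrak c=\infty$ and $\chi_4(D)$ for $\mathfrak c=0$; the resulting Dirichlet series are handled exactly as above and yield $\phi_{0,\infty}(m,s,\chi_4)=s(m)/2^{2s}$ and $\phi_{0,0}(m,s,\chi_4)=t(m)/4^{2s}$. As a shortcut, and a consistency check, the identities \eqref{eq:4inf0}--\eqref{eq:4infinf} also follow from the Fricke involution $W_4=\big(\begin{smallmatrix}0&-1\\4&0\end{smallmatrix}\big)$: conjugation by $W_4$ preserves $\Gamma_0(4)$, swaps the cusps $0$ and $\infty$, and (since $4\equiv0\pmod4$) fixes $\chi_4$, so it carries $\sigma_\infty^{-1}\Gamma\sigma_0$ onto $\sigma_0^{-1}\Gamma\sigma_\infty$ up to $-I\in\Gamma_\infty$; tracing this through \eqref{eq:coeffFourierEisent} reproduces precisely the factor $\chi_4(-1)$ in \eqref{eq:4inf0} and its absence in \eqref{eq:4infinf}.

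The only delicate point, exactly as in Lemma~\ref{lem:64inf}, is bookkeeping: tracking the value of $N''$ for each cusp, the power of $2$ entering the modulus $c=C\sqrt{N''}$ (respectively $c=C\sqrt{N''s}$ for $\mathfrak a=0$), the congruences modulo $(f,N/f)$ (respectively modulo $s$) on the summation variable, and invoking \eqref{eq:miyake2} with the correct conductor. The one extra wrinkle relative to $\mathfrak a=\infty$ is the half-integer entries of the scaling matrix $\sigma_0$ for the Atkin--Lehner cusp, which must be carried carefully so that the exponential $e(md/c)$ emerges with the right argument; the Fricke-involution route sidesteps this.
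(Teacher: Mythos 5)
Your proposal is correct and follows essentially the same route as the paper: the paper only writes out the computation in full for $N=64$, $\mathfrak{a}=\infty$ (Lemma \ref{lem:64inf}) and asserts the remaining cases, including $N=4$, follow \emph{by the same manner}, which is exactly what you carry out — the double-coset description from \cite[Eq.~3.20]{KY} and \eqref{eq:coset}, the characters from Corollary \ref{cor:gamma04}, the Ramanujan sum for the cusp pairs giving $s(m)$, and \eqref{eq:miyake2} for the Gauss sum giving $t(m)$, with the correct moduli $2C$ and $4C$ matching $C_{\infty,0}(4)$ and $C_{\infty,\infty}(4)$. The Fricke-involution remark is a sensible independent consistency check but is not needed.
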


\begin{cor}\label{cor:zeros}
For $N=64$ we have
\begin{equation}\label{eq:zeroN64}
 \phi_{\infty,\frac{1}{32}}(m^2,s,\chi_4)=\phi_{0,\frac{1}{2}}(m^2,s,\chi_4)=0.
\end{equation}

For $N=16$ we have
\begin{equation}\label{eq:zeroN16}
 \phi_{\infty,\frac{1}{8}}(m^2,s,\chi_4)=\phi_{0,\frac{1}{2}}(m^2,s,\chi_4)=0.
\end{equation}
\end{cor}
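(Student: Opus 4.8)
The plan is to deduce all four vanishing statements directly from the explicit formulas for the relevant Fourier coefficients obtained above, exploiting the single arithmetic feature that distinguishes a perfect square: its $2$-adic valuation is even. First I would record the two coincidences of formulas already established: by \eqref{eq:64infphi1/32} and \eqref{eq:6401/2}, for $N=64$ both $\phi_{\infty,1/32}(\cdot,s,\chi_4)$ and $\phi_{0,1/2}(\cdot,s,\chi_4)$ are given by the single expression \[ \frac{8}{32^{2s}}\,\delta_{8}(m)\,t\!\left(\frac{m}{8}\right)-\frac{16}{64^{2s}}\,\delta_{16}(m)\,t\!\left(\frac{m}{16}\right), \] and by \eqref{eq:1601/2} and \eqref{eq:16inf1/8}, for $N=16$ both $\phi_{\infty,1/8}(\cdot,s,\chi_4)$ and $\phi_{0,1/2}(\cdot,s,\chi_4)$ equal \[ \frac{2}{8^{2s}}\,\delta_{2}(m)\,t\!\left(\frac{m}{2}\right)-\frac{4}{16^{2s}}\,\delta_{4}(m)\,t\!\left(\frac{m}{4}\right). \] Hence it suffices to substitute $m\mapsto m^{2}$ in one formula of each shape and show the result vanishes.

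For the $N=64$ case I would argue as follows. Since $v_{2}(m^{2})=2\,v_{2}(m)$ is even, the divisibilities $8\mid m^{2}$ and $16\mid m^{2}$ are equivalent --- both hold exactly when $4\mid m$ --- so $\delta_{8}(m^{2})=\delta_{16}(m^{2})$. If this common value is $0$ the expression vanishes termwise and we are done. If it equals $1$, then $m^{2}/8=2\,(m^{2}/16)$ with $m^{2}/16\in\N$, and the identity $\left(\frac{m^{2}}{8}\right)^{1-2s}\sigma_{2s-1}(\chi_4;m^{2}/8)=2^{1-2s}\left(\frac{m^{2}}{16}\right)^{1-2s}\sigma_{2s-1}(\chi_4;m^{2}/16)$ used in the proof of Lemma \ref{lem:64inf} (valid because $\chi_4$ is supported on odd integers, so $2k$ and $k$ have the same $\chi_4$-weighted divisor sum) gives $t(m^{2}/8)=2^{1-2s}\,t(m^{2}/16)$. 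Substituting and using $\dfrac{8\cdot 2^{1-2s}}{32^{2s}}=2^{4-12s}=\dfrac{16}{64^{2s}}$ shows that the two terms cancel exactly, whence $\phi_{\infty,1/32}(m^{2},s,\chi_4)=\phi_{0,1/2}(m^{2},s,\chi_4)=0$.

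The $N=16$ case is handled identically: evenness of $v_{2}(m^{2})$ makes $2\mid m^{2}$ and $4\mid m^{2}$ equivalent (both meaning $2\mid m$), so $\delta_{2}(m^{2})=\delta_{4}(m^{2})$; when this is $1$ we get $t(m^{2}/2)=2^{1-2s}\,t(m^{2}/4)$ from $m^{2}/2=2\,(m^{2}/4)$ together with the same $\sigma$-identity, and $\dfrac{2\cdot 2^{1-2s}}{8^{2s}}=2^{2-8s}=\dfrac{4}{16^{2s}}$ forces the cancellation. There is no serious obstacle here: the entire content is the observation that, for a square argument, the two $\delta$-conditions in each formula always have the same truth value, after which the coefficients have been rigged to cancel. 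The only point needing a moment's care is confirming that the $\sigma_{2s-1}(\chi_4;\cdot)$ factors at $m^{2}/8$ and $m^{2}/16$ (resp. at $m^{2}/2$ and $m^{2}/4$) genuinely agree, which is immediate from $\chi_4$ vanishing on even integers.
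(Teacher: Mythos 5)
Your proof is correct and follows essentially the same route as the paper: the paper's one-line argument rests on the intermediate form \eqref{eq:1-delta16}, namely $\phi_{\infty,1/32}(m,s,\chi_4)=\frac{8}{32^{2s}}\delta_8(m)(1-\delta_{16}(m))t(m/8)$, which vanishes for $m=m'^2$ precisely because $8\mid m^2$ and $16\mid m^2$ are equivalent for squares --- the same parity-of-$v_2$ observation you make, and the identity $t(m/8)=2^{1-2s}t(m/16)$ you use is exactly the one the paper used to pass between the two forms. Your version just runs that computation in the opposite direction (verifying the cancellation in the two-term formula rather than quoting the $(1-\delta_{16})$ form), and in the $N=16$ case it usefully spells out the detail the paper leaves implicit.
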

\begin{proof}
The identity \eqref{eq:zeroN64} is a consequence of \eqref{eq:1-delta16}, \eqref{eq:64infphi1/32} and \eqref{eq:6401/2}.
Similarly, \eqref{eq:zeroN16} follows from \eqref{eq:16inf1/8} and \eqref{eq:1601/2}.
\end{proof}

%%%%%%%%%%%%%%%%%%%%%%%%%%%%%%%%%%%%%%%%%%%%%%%%%%%%%%%%%%%%%%%%%%%%%%%%%
\section{Contribution of the continuous spectrum}\label{sec:cont2}

Applying the Kuznetsov trace formula \eqref{eq:Kuznetsov} to the sums of Kloosterman sums in \eqref{eq:mndeven}, we find that for even $n$ the continuous spectrum \eqref{spectra:cont} can be written as a sum of 
\begin{multline}\label{c1even}
C^{1}_{\text{even}}:=-\frac{2i\zeta(2s)}{2^s\pi^{s-1/2}}\sum_{l=1}^{\infty}\frac{1}{l^s}
\sum_{\mathfrak{c} \text{ sing. }\Gamma_0(4)}\frac{1}{4\pi}\int_{-\infty}^{\infty}\frac{\psi_D(t)\sinh(\pi t)}{t\cosh{(\pi t)}}\\ \times l^{-2it} \overline{\phi_{\infty,\mathfrak{c}}(l^2,1/2+it,\chi_4)}
n^{2it}_{1} \phi_{\infty,\mathfrak{c}}(n^{2}_{1},1/2+it,\chi_4)dt
\end{multline}
and
\begin{multline}\label{c2even}
C^{2}_{\text{even}}:=\frac{2\zeta(2s)}{\pi^{s-1/2}}\sum_{l=1}^{\infty}\frac{1}{l^s}
\sum_{\mathfrak{c} \text{ sing. }\Gamma_0(4)}\frac{1}{4\pi}\int_{-\infty}^{\infty}\frac{\psi_D(t)\sinh(\pi t)}{t\cosh{(\pi t)}}\\ \times l^{-2it} \overline{\phi_{\infty,\mathfrak{c}}(l^2,1/2+it,\chi_4)}n_{1}^{2it}\phi_{0,\mathfrak{c}}(n_{1}^{2},1/2+it,\chi_4)dt.
\end{multline}
Similarly,  applying \eqref{eq:mndodd}  for odd $n$ it is required to investigate
\begin{multline}\label{c1odd}
C^{1}_{\text{odd}}:=\frac{8\zeta(2s)}{\pi^{s-1/2}}\sum_{l=1}^{\infty}\frac{1}{l^s}
\sum_{\mathfrak{c} \text{ sing. }\Gamma_0(64)}\frac{1}{4\pi}\int_{-\infty}^{\infty}\frac{\psi_D(t)\sinh(\pi t)}{t\cosh{(\pi t)}}\\ \times l^{-2it} \overline{\phi_{\infty,\mathfrak{c}}(l^2,1/2+it,\chi_4)}n^{2it}\phi_{0,\mathfrak{c}}(n^2,1/2+it,\chi_4)dt,
\end{multline}
\begin{multline}\label{c2odd}
C^{2}_{\text{odd}}:=\frac{4\zeta(2s)}{\pi^{s-1/2}}\sum_{l=1}^{\infty}\frac{1}{l^s}
\sum_{\mathfrak{c} \text{ sing. }\Gamma_0(16)}\frac{1}{4\pi}\int_{-\infty}^{\infty}\frac{\psi_D(t)\sinh(\pi t)}{t\cosh{(\pi t)}}\\ \times (1-2^{-s}) l^{-2it} \overline{\phi_{\infty,\mathfrak{c}}(l^2,1/2+it,\chi_4)}n^{2it}\phi_{0,\mathfrak{c}}(n^2,1/2+it,\chi_4)dt.
\end{multline}

In order to compute the sums over $l$ and $\mathfrak{c}$ in the expressions above we use the results of the previous section. Even and odd cases require separate treatment.
\subsection{Even case}
\begin{lem}\label{lem:odd111}
For $\Re{s}>1$ we have
\begin{multline}
C^{1}_{\text{even}}=\frac{L(\chi_4,s)}{4\pi^{s-1/2}2^{s}(1-2^{-2s})}
\frac{1}{2\pi i}\int_{-\infty}^{\infty}\frac{\psi_D(t)\sinh(\pi t)}{t\cosh{(\pi t)}}\\ \times
\frac{\zeta(s+2it)\zeta(s-2it)}{L(\chi_4,1+2it)L(\chi_4,1-2it)}\Biggl( (1-2^{2it-s})n_{1}^{2it}\sigma_{-2it}(\chi_4;n_{1}^{2})\\+ 
(1-2^{-2it-s})n_{1}^{-2it} \sigma_{2it}(\chi_4;n_{1}^{2})
\Biggr)dt.
\end{multline}

\end{lem}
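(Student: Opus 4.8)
The plan is to carry out the sums over $\mathfrak{c}$ and $l$ in \eqref{c1even} explicitly. By Lemma~\ref{lem:singularcusps} the only singular cusps of $\Gamma_0(4)$ for $\chi_4$ are $0$ and $\infty$, so \eqref{c1even} is a sum of two integrals, one for each $\mathfrak{c}\in\{0,\infty\}$. First I would insert the Fourier coefficients from \eqref{eq:400}, \eqref{eq:4inf0}, \eqref{eq:4infinf}, specialised at the spectral parameter $1/2+it$; in the notation \eqref{eq:sm}--\eqref{eq:tm} this amounts to putting $s\mapsto 1/2+it$, so that $s(m)=\sigma_{-2it}(\chi_4;m)/L(\chi_4,1+2it)$ and $t(m)=\tau(\chi_4)\sigma_{2it}(\chi_4;m)m^{-2it}/L(\chi_4,1+2it)$, while $2^{2s}$ and $4^{2s}$ become $2^{1+2it}$ and $4^{1+2it}$. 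Since $\chi_4$ is real, complex conjugation acts by $t\mapsto -t$ in these quantities, together with $\overline{\tau(\chi_4)}=\chi_4(-1)\tau(\chi_4)=-\tau(\chi_4)$ and $\tau(\chi_4)\overline{\tau(\chi_4)}=|\tau(\chi_4)|^2=4$.

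Next, assuming $\Re s>1$ so that the Dirichlet series that arise converge absolutely and the interchange with the $t$-integral is legitimate (and $\psi_D(t)$ decays by the hypotheses on $\omega$), I would recognise the inner sum over $l$ as the function $Z$ of \eqref{eq:zs}. The essential point is the bookkeeping of the powers of $l$: since $s(m)$ carries no overall power of $m$, the factor $l^{-2it}$ from the Kuznetsov weight in \eqref{c1even} merely shifts the exponent, so the $\mathfrak{c}=0$ piece produces $\sum_{l\geq1}\sigma_{2it}(\chi_4;l^2)l^{-(s+2it)}=Z(s+2it,2it)$; whereas $t(m)$ carries the factor $m^{1-2s}$, which at $m=l^2$ equals $(l^2)^{-2it}=l^{-4it}$ and after conjugation $l^{4it}$, so in the $\mathfrak{c}=\infty$ piece this over-compensates $l^{-2it}$ and yields $\sum_{l\geq1}\sigma_{-2it}(\chi_4;l^2)l^{-(s-2it)}=Z(s-2it,-2it)$. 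In the same way the factor $n_1^{2it}$ and the value $\phi_{\infty,\mathfrak{c}}(n_1^2,1/2+it,\chi_4)$ combine to $n_1^{2it}\sigma_{-2it}(\chi_4;n_1^2)$ for $\mathfrak{c}=0$ and to $n_1^{-2it}\sigma_{2it}(\chi_4;n_1^2)$ for $\mathfrak{c}=\infty$.

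Then I would apply \eqref{eq:zs}: a direct computation gives $Z(s+2it,2it)=\dfrac{1-2^{2it-s}}{1-2^{-2s}}\cdot\dfrac{L(\chi_4,s)\zeta(s+2it)\zeta(s-2it)}{\zeta(2s)}$, and $Z(s-2it,-2it)$ equals the same expression with $1-2^{2it-s}$ replaced by $1-2^{-2it-s}$. Substituting these back, the factor $\zeta(2s)$ in front of \eqref{c1even} cancels the $\zeta(2s)$ in the denominator; the scalars $\chi_4(-1)^2=1$, $2^{1-2it}2^{1+2it}=4$ (cusp $0$) and $\tau(\chi_4)\overline{\tau(\chi_4)}=4$, $4^{1-2it}4^{1+2it}=16$ (cusp $\infty$) combine to a common factor $1/4$ in each case; and the numerical prefactor is tidied using $1/(2\pi i)=-i/(2\pi)$. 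Adding the $\mathfrak{c}=0$ and $\mathfrak{c}=\infty$ contributions then reproduces precisely the bracket $(1-2^{2it-s})n_1^{2it}\sigma_{-2it}(\chi_4;n_1^2)+(1-2^{-2it-s})n_1^{-2it}\sigma_{2it}(\chi_4;n_1^2)$ of the claimed identity.

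The step demanding the most care is the bookkeeping just described: getting the several complex conjugations right (of $\tau(\chi_4)$ and of the half-integral powers $(l^2)^{\pm2it}$, $2^{1\pm2it}$, $4^{1\pm2it}$) and feeding the correct pair of arguments into \eqref{eq:zs}, so that both the $\mathfrak{c}=0$ and the $\mathfrak{c}=\infty$ Dirichlet series collapse to $L(\chi_4,s)\zeta(s+2it)\zeta(s-2it)/\zeta(2s)$ rather than to something containing $\zeta(s\pm4it)$ or $L(\chi_4,s\pm2it)$; a slip in an exponent or sign there changes the shape of the answer entirely. Everything else is routine algebra.
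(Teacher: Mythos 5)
Your proposal is correct and follows essentially the same route as the paper: split over the two singular cusps $0$ and $\infty$ of $\Gamma_0(4)$, insert the explicit coefficients $\phi_{\infty,0}$ and $\phi_{\infty,\infty}$ at spectral parameter $1/2+it$, identify the $l$-sums as $Z(s+2it,2it)$ and $Z(s-2it,-2it)$ via \eqref{eq:zs}, and combine using $|\tau(\chi_4)|^2=4$. All the exponent and conjugation bookkeeping you describe matches the paper's computation.
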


\begin{proof}
There are two nonequivalent singular cusps for $\Gamma_0(4)$: $0$ and $\infty$. 

Consider first $\mathfrak{c}=\infty$.
Applying \eqref{eq:4infinf}, \eqref{eq:tm}, \eqref{eq:zs} with $z:=s-2it$ and $s:=-2it$, we compute the sum over $l$ in \eqref{c1even} 
\begin{multline}
\sum_{l=1}^{\infty}\frac{1}{l^s}
l^{-2it} \overline{\phi_{\infty,\mathfrak{\infty}}(l^2,1/2+it,\chi_4)}n_{1}^{2it}\phi_{\infty,\mathfrak{\infty}}(n_{1}^{2},1/2+it,\chi_4)\\=
\frac{|\tau(\chi_4)|^2}{16}\frac{1-2^{-2it-s}}{1-2^{-2s}}n_{1}^{-2it}\sigma_{2it}(\chi_4;n_{1}^{2})\frac{L(\chi_4,s)\zeta(s+2it)\zeta(s-2it)}{\zeta(2s)L(\chi_4,1+2it)L(\chi_4,1-2it)}.
\end{multline}

Now let us consider the case $\mathfrak{c}=0$. 
Using \eqref{eq:4inf0}, \eqref{eq:sm}, \eqref{eq:zs} with $z:=s+2it$ and $s:=2it$, we infer
\begin{multline}
\sum_{l=1}^{\infty}\frac{1}{l^s}
l^{-2it} \overline{\phi_{\infty,0}(l^2,1/2+it,\chi_4)}n_{1}^{2it}\phi_{\infty,0}(n_{1}^{2},1/2+it,\chi_4)\\=
\frac{1}{4}\frac{1-2^{2it-s}}{1-2^{-2s}}n_{1}^{2it}\sigma_{-2it}(\chi_4;n_{1}^{2})\frac{L(\chi_4,s)\zeta(s+2it)\zeta(s-2it)}{\zeta(2s)L(\chi_4,1+2it)L(\chi_4,1-2it)}.
\end{multline}

The assertion follows by summing the last two expressions and using the fact that $|\tau(\chi_4)|^2=4$.
\end{proof}

%%%%%%%%%%%%%%%%%%%%%%%%%%%%%%%%%%%%%%%%%%%%%%%%%%%%%%%%%%%%%%%%%%%%%%%%%%%%%
\begin{lem}\label{lem:even111}
For $\Re{s}>1$ the following identity holds
\begin{multline}
C^{2}_{\text{even}}=\frac{L(\chi_4,s)}{4\pi^{s-1/2}(1-2^{-2s})}
\frac{1}{2\pi i}\int_{-\infty}^{\infty}\frac{\psi_D(t)\sinh(\pi t)}{t\cosh{(\pi t)}}\\ \times
\frac{\zeta(s+2it)\zeta(s-2it)}{L(\chi_4,1+2it)L(\chi_4,1-2it)}
\Biggl( (1-2^{-2it-s})(2n_{1})^{2it}\sigma_{-2it}(\chi_4;n_{1}^{2})\\+ 
(1-2^{2it-s})(2n_{1})^{-2it}\sigma_{2it}(\chi_4;n_{1}^{2})
\Biggr)dt.
\end{multline}

\end{lem}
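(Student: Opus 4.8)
The plan is to follow the template of the proof of Lemma~\ref{lem:odd111}. First I would invoke Lemma~\ref{lem:singularcusps} to recall that $\Gamma_0(4)$ has exactly the two singular cusps $0$ and $\infty$, so the sum over $\mathfrak{c}$ in \eqref{c2even} has just these two terms; for each I would insert the explicit Fourier coefficients from \eqref{eq:400}, \eqref{eq:4inf0} and \eqref{eq:4infinf}. The one structural difference from Lemma~\ref{lem:odd111} is that here the relevant product is the mixed one $\overline{\phi_{\infty,\mathfrak{c}}(l^2,\tfrac12+it,\chi_4)}\,\phi_{0,\mathfrak{c}}(n_1^2,\tfrac12+it,\chi_4)$, so its two factors are of opposite types (one an $s(\cdot)$-type coefficient as in \eqref{eq:sm}, the other a $t(\cdot)$-type coefficient as in \eqref{eq:tm}), and the Gauss sum $\tau(\chi_4)$ does not pair with its complex conjugate the way it did in Lemma~\ref{lem:odd111}. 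Instead one uses the explicit value $\tau(\chi_4)=2i$, so that $\overline{\tau(\chi_4)}=\chi_4(-1)\tau(\chi_4)=-2i$.

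Next I would carry out the sum over $l$. For $\mathfrak{c}=\infty$ we have, up to powers of $2$, $\overline{\phi_{\infty,\infty}(l^2,\tfrac12+it,\chi_4)}\propto\overline{t(l^2)}$ and $\phi_{0,\infty}(n_1^2,\tfrac12+it,\chi_4)\propto s(n_1^2)$; taking conjugates and using that $\chi_4$ is real (so $\overline{\sigma_{2it}(\chi_4;l^2)}=\sigma_{-2it}(\chi_4;l^2)$ and $\overline{L(\chi_4,1+2it)}=L(\chi_4,1-2it)$), the $l$-sum reduces to $\sum_{l\ge1}\sigma_{-2it}(\chi_4;l^2)l^{-(s-2it)}=Z(s-2it,-2it)$, which \eqref{eq:zs} evaluates in closed form. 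For $\mathfrak{c}=0$ the roles of $s(\cdot)$ and $t(\cdot)$ are swapped and the $l$-sum becomes $\sum_{l\ge1}\sigma_{2it}(\chi_4;l^2)l^{-(s+2it)}=Z(s+2it,2it)$, again evaluated by \eqref{eq:zs}. In both cases the factor $1/\zeta(2s)$ produced by \eqref{eq:zs} cancels against the $\zeta(2s)$ standing in front of \eqref{c2even}, the numerator $L(\chi_4,s)$ from \eqref{eq:zs} moves out front, the factor $1-2^{-2s}$ remains in the denominator, and the numerators $1-2^{-s\mp2it}$ of \eqref{eq:zs} become the $(1-2^{-2it-s})$ and $(1-2^{2it-s})$ multiplying the two spectral terms.

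Then I would reassemble the two pieces. The $l$-independent part carries $n_1^{2it}\sigma_{-2it}(\chi_4;n_1^2)$ from $\mathfrak{c}=\infty$ (the $\sigma$ coming from $s(n_1^2)=\sigma_{-2it}(\chi_4;n_1^2)/L(\chi_4,1+2it)$) and $n_1^{-2it}\sigma_{2it}(\chi_4;n_1^2)$ from $\mathfrak{c}=0$, while the $L$-denominator $L(\chi_4,1+2it)L(\chi_4,1-2it)$ is assembled from the $s(n_1^2)$ (or $t(n_1^2)$) factor and the conjugated $l$-sum. The constant $-2i=\overline{\tau(\chi_4)}$, together with the $\tfrac{2}{\pi^{s-1/2}}$ of \eqref{c2even} and the normalisation $\tfrac1{4\pi}$ in \eqref{spectra:cont} and the leftover powers of $2$, is exactly what converts $\int\cdots dt$ into $\tfrac1{2\pi i}\int\cdots dt$ with the overall constant $\tfrac1{4\pi^{s-1/2}(1-2^{-2s})}$; the remaining powers of $2$ recombine as $2^{\pm2it}n_1^{\pm2it}=(2n_1)^{\pm2it}$. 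Summing the $\mathfrak{c}=\infty$ and $\mathfrak{c}=0$ contributions then yields the stated identity. The interchange of the $t$-integration with the $l$-summation and the use of the Euler product \eqref{eq:zzs} for $Z$ are legitimate for $\Re s>1$ by absolute convergence, so no analytic continuation of $Z$ is needed.

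I expect the main obstacle to be purely bookkeeping rather than conceptual: keeping track of the several independent sources of powers of $2$ — the levels $4^{2s}$ and $2^{2s}$ in the Fourier coefficients of Eisenstein series, the factor $l^{\mp4it}$ released when one conjugates $(l^2)^{\pm2it}$ inside $t(l^2)$, and the binomial factor $1-2^{2s-2z}$ in \eqref{eq:zs} — and verifying that they consolidate precisely into the claimed $(2n_1)^{\pm2it}$ and the overall constant $\tfrac14$. One also has to be attentive to the single unpaired occurrence of $\tau(\chi_4)$ in each cusp contribution, which is what makes the explicit value $\tau(\chi_4)=2i$ (not merely $|\tau(\chi_4)|^2=4$) necessary here, in contrast to Lemma~\ref{lem:odd111}.
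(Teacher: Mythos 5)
Your proposal is correct and follows essentially the same route as the paper: split over the two singular cusps $0$ and $\infty$ of $\Gamma_0(4)$, insert \eqref{eq:400}, \eqref{eq:4inf0}, \eqref{eq:4infinf}, evaluate the $l$-sums via \eqref{eq:zs} with $(z,s)=(s\mp 2it,\mp 2it)$, and use the explicit value $\tau(\chi_4)=2i$ together with $\chi_4(-1)=-1$ to collect the unpaired Gauss sum and the stray powers of $2$ into $(2n_1)^{\pm 2it}$ and the stated constant. Your observation that the mixed $s(\cdot)/t(\cdot)$ pairing is what forces the explicit value of $\tau(\chi_4)$ (rather than $|\tau(\chi_4)|^2=4$ as in Lemma \ref{lem:odd111}) is precisely the point the paper's own proof turns on.
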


\begin{proof}
Similarly to the previous lemma, the sum over $\mathfrak{c}$ in \eqref{c2even} contains only two summands: $0$ and $\infty$. 

Let us start with $\mathfrak{c}=\infty$.
Applying \eqref{eq:4infinf}, \eqref{eq:400}, \eqref{eq:tm}, \eqref{eq:sm}, \eqref{eq:zs} with $z:=s-2it$ and $s:=-2it$, we obtain
\begin{multline}
\sum_{l=1}^{\infty}\frac{1}{l^s}
l^{-2it} \overline{\phi_{\infty,\mathfrak{\infty}}(l^2,1/2+it,\chi_4)}n_{1}^{2it}\phi_{0,\mathfrak{\infty}}(n_{1}^{2},1/2+it,\chi_4)\\=
\frac{\overline{\tau(\chi_4)}}{2^{3-2it}}\frac{1-2^{-2it-s}}{1-2^{-2s}}n_{1}^{2it}\sigma_{-2it}(\chi_4;n_{1}^{2})\frac{L(\chi_4,s)\zeta(s+2it)\zeta(s-2it)}{\zeta(2s)L(\chi_4,1+2it)L(\chi_4,1-2it)}.
\end{multline}

Next let us consider $\mathfrak{c}=0$.  Applying \eqref{eq:4inf0}, \eqref{eq:400},  \eqref{eq:tm},  \eqref{eq:sm}, \eqref{eq:zs} with $z:=s+2it$ and $s:=2it$, we find that
\begin{multline}
\sum_{l=1}^{\infty}\frac{1}{l^s}
l^{-2it} \overline{\phi_{\infty,0}(l^2,1/2+it,\chi_4)}n_{1}^{2it}\phi_{0,0}(n_{1}^{2},1/2+it,\chi_4)\\=
\frac{\tau(\chi_4)\chi_{4}(-1)}{2^{3+2it}}\frac{1-2^{2it-s}}{1-2^{-2s}}n_{1}^{-2it}\sigma_{2it}(\chi_4;n_{1}^{2})\frac{L(\chi_4,s)\zeta(s+2it)\zeta(s-2it)}{\zeta(2s)L(\chi_4,1+2it)L(\chi_4,1-2it)}.
\end{multline}

The assertion follows by summing the last two expressions and by noting that $\tau(\chi_4)=2i$ and $\chi_4(-1)=-1$.

\end{proof}

Using \eqref{eq:sigmachi4}, as a direct consequence of Lemmas \ref{lem:odd111} and \ref{lem:even111}, we obtain the following corollary.
%%%%%%%%%%%%%%%%%%%%%%%%%%%%%%%%%%%%%%%%%%%%%%%%%%%%%%%%%%%%%%%%%%%%%%%%%%%%%%%%
\begin{cor}
For $Re{s}>1$ we have
\begin{multline}\label{eq:c1even+c2even}
C^{1}_{\text{even}}+C^{2}_{\text{even}}=\frac{L(\chi_4,s)}{4\pi^{s-1/2}}
\frac{1}{2\pi i}\int_{-\infty}^{\infty}\frac{\psi_D(t)\sinh(\pi t)}{t\cosh{(\pi t)}}\\ \times
\frac{\zeta(s+2it)\zeta(s-2it)}{L(\chi_4,1+2it)L(\chi_4,1-2it)} \Biggl( n^{2it}\sigma_{-2it}(\chi_4;n^2)+ 
n^{-2it}\sigma_{2it}(\chi_4;n^2)
\Biggr)dt.
\end{multline}
\end{cor}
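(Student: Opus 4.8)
The plan is to substitute into $C^1_{\text{even}}+C^2_{\text{even}}$ the explicit formulas provided by Lemmas~\ref{lem:odd111} and~\ref{lem:even111}, to factor out the common analytic kernel
$$\mathcal{K}:=\frac{L(\chi_4,s)}{4\pi^{s-1/2}}\,\frac{1}{2\pi i}\int_{-\infty}^{\infty}\frac{\psi_D(t)\sinh(\pi t)}{t\cosh(\pi t)}\,\frac{\zeta(s+2it)\zeta(s-2it)}{L(\chi_4,1+2it)L(\chi_4,1-2it)}\,(\cdots)\,dt,$$
and then to simplify the coefficients of $\sigma_{-2it}(\chi_4;n_1^2)$ and $\sigma_{2it}(\chi_4;n_1^2)$ appearing inside the bracket. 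Since $n$ is even with $n_1=n/2$, one has $(2n_1)^{\pm 2it}=2^{\pm 2it}n_1^{\pm 2it}=n^{\pm 2it}$, so after rewriting the $C^2_{\text{even}}$ contribution through this identity both lemmas feed the same two monomials $n_1^{2it}\sigma_{-2it}(\chi_4;n_1^2)$ and $n_1^{-2it}\sigma_{2it}(\chi_4;n_1^2)$ against $\mathcal{K}$.

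Next I would add the coefficients. For $n_1^{2it}\sigma_{-2it}(\chi_4;n_1^2)$, Lemma~\ref{lem:odd111} contributes (relative to $\mathcal{K}$) the factor $(1-2^{2it-s})/\bigl(2^s(1-2^{-2s})\bigr)=\bigl(2^{-s}-2^{2it-2s}\bigr)/(1-2^{-2s})$ and Lemma~\ref{lem:even111} contributes $2^{2it}(1-2^{-2it-s})/(1-2^{-2s})=\bigl(2^{2it}-2^{-s}\bigr)/(1-2^{-2s})$; adding them the middle terms cancel,
$$\frac{\bigl(2^{-s}-2^{2it-2s}\bigr)+\bigl(2^{2it}-2^{-s}\bigr)}{1-2^{-2s}}=\frac{2^{2it}\bigl(1-2^{-2s}\bigr)}{1-2^{-2s}}=2^{2it}.$$
A symmetric computation — or simply the change of variables $t\mapsto-t$ in the integral, which fixes $\mathcal{K}$ — gives the coefficient $2^{-2it}$ for $n_1^{-2it}\sigma_{2it}(\chi_4;n_1^2)$. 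Hence the bracket collapses to $n^{2it}\sigma_{-2it}(\chi_4;n_1^2)+n^{-2it}\sigma_{2it}(\chi_4;n_1^2)$.

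Finally I would apply \eqref{eq:sigmachi4} with $n_1=n/2$ to replace $\sigma_{\mp 2it}(\chi_4;n_1^2)$ by $\sigma_{\mp 2it}(\chi_4;n^2)$, which produces exactly \eqref{eq:c1even+c2even}. The argument is pure bookkeeping; the only place requiring care is keeping track of the powers $2^{\pm 2it-s}$ arising from the $(1-2^{\pm 2it-s})$ factors together with the normalizing $2^{-s}$ in Lemma~\ref{lem:odd111}, and once these are matched the cancellation is immediate — so there is no real obstacle, and the corollary is indeed "a direct consequence" of the two lemmas.
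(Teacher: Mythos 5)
Your proposal is correct and is exactly the computation the paper leaves implicit when it calls the corollary "a direct consequence" of Lemmas \ref{lem:odd111} and \ref{lem:even111} together with \eqref{eq:sigmachi4}: the coefficient additions $(2^{-s}-2^{2it-2s})+(2^{2it}-2^{-s})=2^{2it}(1-2^{-2s})$ and its mirror check out, and the final substitution $\sigma_{\mp 2it}(\chi_4;n_1^2)=\sigma_{\mp 2it}(\chi_4;n^2)$ is precisely the paper's appeal to \eqref{eq:sigmachi4}. No gaps.
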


Finally, we extend this result to the critical strip using the following lemma.
\begin{lem}\label{lem:Fs}
Suppose that the function $F(s)$ is defined for $\Re{s}>1$ by
\begin{equation}
F(s)=\frac{1}{2\pi i}\int_{(0)}f(s,z)dz,
\end{equation}
where $f(s,z)$ has two simple poles at  the points $z_{1}=1-s$ and $z_2=s-1$. Then for $\Re{s}<1$ we have
\begin{equation}
F(s)=\frac{1}{2\pi i}\int_{(0)}f(s,z)dz+\text{Res}_{z_1}f(s,z)-\text{Res}_{z_2}f(s,z).
\end{equation}
\end{lem}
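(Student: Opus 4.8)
The plan is to realize $F(s)$, initially given for $\Re{s}>1$ by an integral along the line $\Re{z}=0$, as a contour integral along a path that can be carried continuously across the line $\Re{s}=1$, and then to deform that path back onto the imaginary axis, collecting the residues at $z_1$ and $z_2$ along the way.

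First I would record the positions of the poles. For $\Re{s}>1$ one has $\Re{z_1}=1-\Re{s}<0$ and $\Re{z_2}=\Re{s}-1>0$, so the line $\Re{z}=0$ separates $z_1$ from $z_2$; in particular the integral defining $F(s)$ makes sense and, granting as usual that $f(s,z)$ decays sufficiently fast in $\Im{z}$ locally uniformly in $s$, it is holomorphic in $s$. For an arbitrary $s\neq 1$ I would then fix a simple contour $C=C(s)$ running from $-i\infty$ to $+i\infty$ that passes to the right of $z_1$ and to the left of $z_2$; such a contour exists because $z_1\neq z_2$ for $s\neq1$, and for $\Re{s}>1$ one may take $C$ to be the imaginary axis. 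By Cauchy's theorem the value $\frac{1}{2\pi i}\int_{C}f(s,z)\,dz$ is independent of the choice of such a contour, and fixing $C$ once and for all in a neighbourhood of a given $s_0$ exhibits this integral as a holomorphic function of $s$ near $s_0$. Hence it furnishes the analytic continuation of $F$ to the region $\Re{s}<1$ (more precisely, to the part of that region in which $z_1,z_2$ remain the only poles of $f(s,\cdot)$).

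Next I would evaluate this continuation for $\Re{s}<1$. Now $\Re{z_1}=1-\Re{s}>0$ and $\Re{z_2}=\Re{s}-1<0$, so an admissible $C(s)$ is the imaginary axis indented to the right around $z_1$ and to the left around $z_2$. Deforming $C(s)$ back to the straight line $(0)$ then sweeps across $z_1$ from the right and across $z_2$ from the left; keeping track of the orientation of the two small bridging loops that appear, one obtains
\[
\frac{1}{2\pi i}\int_{C(s)}f(s,z)\,dz=\frac{1}{2\pi i}\int_{(0)}f(s,z)\,dz+\text{Res}_{z_1}f(s,z)-\text{Res}_{z_2}f(s,z),
\]
which is precisely the asserted identity, the line $(0)$ being a legitimate contour here since $z_1$ and $z_2$ have nonzero real part.

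The only point that really demands attention — and the step I expect to be the main obstacle — is the sign bookkeeping: one must verify that crossing $z_1$, which for $\Re{s}<1$ lies to the right of the imaginary axis, contributes $+\text{Res}_{z_1}$, whereas crossing $z_2$, which lies to the left, contributes $-\text{Res}_{z_2}$. This is settled by orienting each bridging loop so that it is positively oriented about the pole it encloses. The analytic ingredients (convergence of the integrals, legitimacy of moving the contour, and differentiation under the integral sign) follow at once from the rapid decay of $\psi_D$ together with Stirling's formula applied to the gamma and $L$-factors constituting $f(s,z)$ in the intended application, and I would simply invoke them.
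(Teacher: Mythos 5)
Your argument is correct and is essentially the paper's own proof: both continue $F$ analytically by replacing the line $\Re z=0$ with a contour indented so that $z_1$ stays on its left and $z_2$ on its right as $s$ crosses the line $\Re s=1$, and then deform back to the imaginary axis for $\Re s<1$, picking up $+\operatorname{Res}_{z_1}f-\operatorname{Res}_{z_2}f$ with exactly the sign bookkeeping you describe. The paper merely makes the indented contour explicit (semicircles of radius $\epsilon$ about $\pm i\Im s$ with $\Im s>0$), whereas you phrase it via an arbitrary admissible contour; the content is the same.
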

\begin{proof}
Assume that $1<\Re{s}<1+\epsilon/2$ and $\Im{s}>0$. Consider the the new contour of integration as follows
\begin{equation*}
\gamma_1=(-i\infty,-i\Im{s}-i\epsilon)\cup C_{\epsilon}^{-}\cup (-i\Im{s}+i\epsilon,i\Im{s}-i\epsilon)\cup C_{\epsilon}^{+}\cup (i\Im{s}+i\epsilon,i\infty),
\end{equation*}
where $C_{\epsilon}^{-}$ is a semicircle in the left half-plane of radius $\epsilon$ and $C_{\epsilon}^{+}$ is a semicircle in the right half-plane of radius $\epsilon$.
While changing the contour of integration to $\gamma_1$ we cross poles at the points $z_1$, $z_2$. Therefore,
\begin{equation}
F(s)=\frac{1}{2\pi i}\int_{(\gamma_1)}f(s,z)dz+\text{Res}_{z_1}f(s,z)-\text{Res}_{z_2}f(s,z).
\end{equation}
Now if $\Re{s}<1$, we can change the contour back to $\Re{z}=0$. This completes the proof.
\end{proof}

\begin{lem}\label{lem:soh}
For $0<\Re{s}<1$, $s\neq 1/2$ we have
\begin{multline}\label{eq:c1c2even}
C^{1}_{\text{even}}+C^{2}_{\text{even}}=M^{C}(n,s)+\frac{L(\chi_4,s)}{4\pi^{s-1/2}}
\frac{1}{2\pi i}\int_{-\infty}^{\infty}\frac{\psi_D(t)\sinh(\pi t)}{t\cosh{(\pi t)}}\\ \times
\frac{\zeta(s+2it)\zeta(s-2it)}{L(\chi_4,1+2it)L(\chi_4,1-2it)}\Biggl( n^{2it}\sigma_{-2it}(\chi_4;n^2)+ 
n^{-2it}\sigma_{2it}(\chi_4;n^2)
\Biggr)dt,
\end{multline}
where $M^{C}(n,s)$ is defined by \eqref{eq:mcns}.
\end{lem}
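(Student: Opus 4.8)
The plan is to obtain \eqref{eq:c1c2even} from \eqref{eq:c1even+c2even} by analytic continuation in $s$, moving a contour of integration across two simple poles in the manner codified by Lemma \ref{lem:Fs}. Concretely, in the integral on the right of \eqref{eq:c1even+c2even} I would substitute $z=2it$, which turns it into $\frac{1}{2\pi i}\int_{(0)}f(s,z)\,dz$ with
\[
f(s,z)=\frac{L(\chi_4,s)}{8\pi^{s-1/2}i}\cdot\frac{\psi_D(z/(2i))\sinh(\pi z/(2i))}{(z/(2i))\cosh(\pi z/(2i))}\cdot\frac{\zeta(s+z)\zeta(s-z)}{L(\chi_4,1+z)L(\chi_4,1-z)}\bigl(n^{z}\sigma_{-z}(\chi_4;n^2)+n^{-z}\sigma_{z}(\chi_4;n^2)\bigr).
\]
The term $C^1_{\text{even}}+C^2_{\text{even}}$, which a priori is only defined by \eqref{c1even}--\eqref{c2even} for $\Re s$ large (where the $l$-sums converge), is here understood for $0<\Re s<1$ as the analytic continuation of its value for $\Re s>1$; since by \eqref{eq:c1even+c2even} that value equals $\frac{1}{2\pi i}\int_{(0)}f(s,z)\,dz$, Lemma \ref{lem:Fs} applies directly once its hypotheses are verified.

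Those hypotheses amount to the statement that the only poles of $f(s,\cdot)$ in a neighbourhood of the line $\Re z=0$ are the two simple poles $z_1=1-s$ and $z_2=s-1$, coming respectively from $\zeta(s+z)$ and $\zeta(s-z)$. I would check that $L(\chi_4,1\pm z)$ is non-vanishing on $\Re z=0$ by the classical non-vanishing of $L(\chi_4,\cdot)$ on the line $\Re{}=1$; that $\psi_D$, given the meromorphic continuation furnished by Lemma \ref{lem:psiD} together with Lemma \ref{lem:h1h2h3}, has its nearest $z$-poles at $\Re z=\pm\Re s$ (produced by the $\Gamma(s/2\pm z/2)$ factors of $h_1,h_2$), hence off the line and irrelevant to the local deformation; and that the apparent pole of $1/z$ at $z=0$ is killed by $\sinh(\pi z/(2i))$. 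The restriction $s\neq 1/2$ is exactly what is needed: it keeps $z_1\neq z_2$, so the poles remain simple, and it avoids the pole of $\Gamma(s-1/2)$ which surfaces in the residue computation through \eqref{eq:psiD1-s}. Convergence of the continued integral over $\Re z=0$ for $0<\Re s<1$ is immediate from the exponential decay of $\psi_D$ against the polynomial growth of the zeta and $L$ factors. Lemma \ref{lem:Fs} then yields
\[
C^1_{\text{even}}+C^2_{\text{even}}=\frac{1}{2\pi i}\int_{(0)}f(s,z)\,dz+\operatorname{Res}_{z=1-s}f(s,z)-\operatorname{Res}_{z=s-1}f(s,z),
\]
where the first term is precisely the integral appearing in \eqref{eq:c1c2even}.

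It remains to identify $\operatorname{Res}_{z=1-s}f-\operatorname{Res}_{z=s-1}f$ with $M^{C}(n,s)$. At $z=1-s$ the factor $\zeta(s+z)$ contributes residue $1$, while the other factors evaluate to $\zeta(s-z)=\zeta(2s-1)$, $L(\chi_4,1+z)=L(\chi_4,2-s)$, $L(\chi_4,1-z)=L(\chi_4,s)$ (the last cancelling the $L(\chi_4,s)$ in the prefactor), $n^{z}\sigma_{-z}(\chi_4;n^2)+n^{-z}\sigma_{z}(\chi_4;n^2)=n^{1-s}\sigma_{s-1}(\chi_4;n^2)+n^{s-1}\sigma_{1-s}(\chi_4;n^2)$, and the Bessel factor equals $2i$ times the right-hand side of \eqref{eq:psiD1-s} (using $t=\tfrac{1-s}{2i}$, so $1/t=\tfrac{2i}{1-s}$); this $2i$ cancels the $1/(2i)$ in the prefactor. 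At $z=s-1$ one gets the mirror image: $\zeta(s-z)$ now contributes residue $-1$; the arguments of the zeta and $L$ factors and the $\sigma$-combination are unchanged; and since $\tfrac{s-1}{2i}=-\tfrac{1-s}{2i}$ and the factor $\frac{\psi_D(t)\sinh(\pi t)}{t\cosh(\pi t)}$ is even in $t$, the Bessel factor takes the same value. Hence the two residues are opposite in sign and \emph{add}, giving twice the common value; substituting \eqref{eq:psiD1-s}, using $(n/2)^{1-s}=2^{s-1}n^{1-s}$ and $n^{1-s}\sigma_{s-1}(\chi_4;n^2)+n^{s-1}\sigma_{1-s}(\chi_4;n^2)=n^{1-s}\bigl(\sigma_{s-1}(\chi_4;n^2)+\sigma_{1-s}(\chi_4;n^2)/n^{2-2s}\bigr)$, one recovers exactly the expression \eqref{eq:mcns} for $M^{C}(n,s)$.

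There is no serious conceptual obstacle here: the hard analysis — the hypergeometric evaluation of $\psi_D$ and, in particular, the explicit value \eqref{eq:psiD1-s} — is already in place, and what remains is a contour shift plus constant bookkeeping. The one point demanding genuine care is the verification that Lemma \ref{lem:Fs} is applicable, i.e. that the meromorphic continuation of $\psi_D$ supplied by Section \ref{sec:special} contributes no poles on a thin neighbourhood of $\Re z=0$ and that $L(\chi_4,1+2it)\neq 0$ for real $t$, so that no residues beyond those at $z_1$ and $z_2$ are collected when the contour is moved.
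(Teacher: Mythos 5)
Your proposal is correct and follows exactly the paper's route: the paper's own proof of Lemma \ref{lem:soh} consists precisely of the substitution $z=2it$ in \eqref{eq:c1even+c2even} followed by an appeal to Lemma \ref{lem:Fs} and \eqref{eq:psiD1-s}, and your residue bookkeeping (each residue contributing $\tfrac12 M^{C}(n,s)$ with opposite signs, combining to $M^{C}(n,s)$ via $\operatorname{Res}_{z_1}-\operatorname{Res}_{z_2}$) matches the constants in \eqref{eq:mcns}. The additional verifications you supply (non-vanishing of $L(\chi_4,1\pm z)$ on the line, location of the $\Gamma$-poles of $\psi_D$, cancellation of the $1/z$ singularity) are details the paper leaves implicit.
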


\begin{proof}
Making the change of variables $z:=2it$ in \eqref{eq:c1even+c2even}, we have
\begin{multline}
C^{1}_{\text{even}}+C^{2}_{\text{even}}=\frac{L(\chi_4,s)}{4\pi^{s-1/2}}
\frac{1}{2\pi i}\int_{(0)}\frac{\psi_D\left(\frac{z}{2i}\right)\sinh\left(\pi \frac{z}{2i}\right)}{z\cosh\left(\pi \frac{z}{2i}\right)}\\ \times
\frac{\zeta(s+z)\zeta(s-z)}{L(\chi_4,1+z)L(\chi_4,1-z)} \Biggl( n^{z}\sigma_{-z}(\chi_4;n^2)+ 
n^{-z}\sigma_{z}(\chi_4;n^2)
\Biggr)dt.
\end{multline}
Then  \eqref{eq:c1c2even} follows from Lemma \ref{lem:Fs} and  \eqref{eq:psiD1-s}.

\end{proof}

%%%%%%%%%%%%%%%%%%%%%%%%%%%%%%%%%%%%%%%%%
%%%%%%%%%%%%%%%%%%%%%%%%%%%%%%%%%%%%%%%%%
\subsection{Odd case}
\begin{lem}
For $\Re{s}>1$ we have
\begin{multline}
C^{1}_{\text{odd}}=\frac{L(\chi_4,s)}{4\pi^{s-1/2}(1-2^{-2s})}
\frac{1}{2\pi i}\int_{-\infty}^{\infty}\frac{\psi_D(t)\sinh(\pi t)}{t\cosh{(\pi t)}}\\ \times
\frac{\zeta(s+2it)\zeta(s-2it)}{L(\chi_4,1+2it)L(\chi_4,1-2it)}n^{2it}\sigma_{-2it}(\chi_4;n^2)\\ \times\Biggl( 
(1-2^{2it-s})(1-2^{-2it-s})+\frac{2^{-2it}+2^{2it}-2^{1-s}}{2^{2s}}
\Biggr)dt.
\end{multline}

\end{lem}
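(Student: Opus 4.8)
The plan is to unfold the definition \eqref{c1odd} of $C^{1}_{\text{odd}}$ over the twelve singular cusps of $\Gamma_0(64)$ from Lemma \ref{lem:singularcusps} and evaluate the resulting $l$-sums one family of cusps at a time, exactly as in the even case (Lemmas \ref{lem:odd111} and \ref{lem:even111}) but with more cusps to handle. First I would observe that most cusps drop out. Since $n$ is odd we have $\delta_4(n^2)=\delta_8(n^2)=\delta_{16}(n^2)=0$, so by Lemma \ref{lem:640} the coefficients $\phi_{0,0}(n^2,\cdot)$, $\phi_{0,1/2}(n^2,\cdot)$, $\phi_{0,1/4}(n^2,\cdot)$, $\phi_{0,1/12}(n^2,\cdot)$ all vanish, killing the cusps $0,\,1/2,\,1/4,\,1/12$; and by Corollary \ref{cor:zeros} the coefficient $\phi_{\infty,1/32}(l^2,\cdot)$ vanishes, killing $1/32$. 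What survives is the contribution of the cusps $\tfrac{1}{8u}$ with $u=1,3,5,7$, of the cusps $\tfrac{1}{16u}$ with $u=1,3$, and of $\infty$.

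For the family $\mathfrak c=\tfrac{1}{8u}$ I would insert \eqref{eq:64inf1/8u} and \eqref{eq:6401/8u}, interchange the (for $\Re s>1$ absolutely convergent) sums over $l$ and $u$, and recognise the inner sum $\sum_{u}\chi_4(-u)e\bigl((n^2+l^2)u/8\bigr)=\chi_4(-1)g(\chi_4;8;n^2+l^2)$ as a Gauss sum; by \eqref{eq:miyake2} it is supported on even $n^2+l^2$, i.e.\ on odd $l$, and, since $n^2+l^2\equiv2\pmod 8$ there, it reduces to a fixed multiple of $\tau(\chi_4)$. Hence the $l$-sum collapses to $\sum_{l\ \mathrm{odd}}\sigma_{2it}(\chi_4;l^2)\,l^{-s-2it}$, which by splitting off the even $l=2m$ and using \eqref{eq:sigmachi4} equals $(1-2^{-s-2it})Z(s+2it,2it)$, evaluated by \eqref{eq:zs}. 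For the family $\mathfrak c=\tfrac{1}{16u}$ one proceeds similarly with \eqref{eq:64inf1/16u}, \eqref{eq:6401/16u}: the factor $\delta_4(l^2)$ restricts to even $l=2m$, and the two-term $u$-sum then produces a factor $\sin(\pi(m^2+n^2)/2)$ which vanishes unless $m$ is even, so in fact $4\mid l$; writing $l=4k$ and using \eqref{eq:sigmachi4} again reduces the $l$-sum to a value of $Z$ given by \eqref{eq:zs}. For $\mathfrak c=\infty$ one uses \eqref{eq:64infphi1/64}, \eqref{eq:0infphi1/64}: now $\delta_{16}(l^2)$ forces $4\mid l$, and with $l=4k$ and \eqref{eq:sigmachi4} the $l$-sum becomes $Z(s-2it,-2it)$, again by \eqref{eq:zs}.

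Finally I would add the three contributions. In each the $\zeta(2s)$ produced by \eqref{eq:zs} cancels against the $\zeta(2s)$ in the prefactor of \eqref{c1odd}, and the common factor $\dfrac{L(\chi_4,s)\,\zeta(s+2it)\zeta(s-2it)}{(1-2^{-2s})L(\chi_4,1+2it)L(\chi_4,1-2it)}\,n^{2it}\sigma_{-2it}(\chi_4;n^2)$ factors out (using $\tau(\chi_4)=2i$, $|\tau(\chi_4)|^2=4$, $\chi_4(-1)=-1$ to tidy the constants); collecting the various powers of $2$ — those from the denominators $8^{2s},16^{2s},64^{2s}$ of the Fourier coefficients, those introduced by the substitution $l=4k$, and those from \eqref{eq:zs} — into a single polynomial in $2^{\pm2it}$ and $2^{-s}$ yields the bracket $(1-2^{2it-s})(1-2^{-2it-s})+\frac{2^{-2it}+2^{2it}-2^{1-s}}{2^{2s}}$, which is the claim.

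The main obstacle is purely this last bookkeeping: keeping all the $2$-power factors straight across the seven contributing cusps — in particular evaluating the small exponential and Gauss sums over $u$ correctly (and noticing that both the $\tfrac{1}{16u}$-family and the cusp $\infty$ ultimately restrict the $l$-sum to $4\mid l$) and tracking the shifts $4^{\mp(s\pm2it)}$ coming from $l=4k$ — so that everything telescopes into the stated bracket. There is no analytic subtlety here: $\Re s>1$ guarantees absolute convergence of every series involved, hence the legitimacy of all the interchanges of summation, just as in Section \ref{sect:diagnondiag}.
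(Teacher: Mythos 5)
Your proposal is correct and follows essentially the same route as the paper: the same identification of the seven surviving cusps of $\Gamma_0(64)$ (the families $\tfrac{1}{8u}$, $\tfrac{1}{16u}$ and $\infty$, the rest vanishing for odd $n$ or for square arguments), the same reduction of each $l$-sum to a value of the Dirichlet series $Z(z,s)$, and the same final collection of $2$-powers into the stated bracket. The only cosmetic difference is that you execute the $u$-sums first (recognizing them as Gauss sums or short exponential sums), whereas the paper first splits the $l$-sum by parity into $C^{1,1}_{\text{odd}}+C^{1,2}_{\text{odd}}$ and then treats the cusps within each parity class; the two organizations are equivalent.
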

\begin{proof}

Let us decompose the sum over $l$ in \eqref{c1odd} as follows
$$\sum_{l=1}^{\infty}=\sum_{l\equiv 1\pmod{2}}+\sum_{l\equiv 0\pmod{2}},$$
so that $C^{1}_{\text{odd}}=C^{1,1}_{\text{odd}}+C^{1,2}_{\text{odd}}$.

Consider first $C^{1,1}_{\text{odd}}$.
 By Lemma \ref{lem:640} we obtain for odd $n$ that
\begin{equation}\label{eq:phi0c}
 \phi_{0,\mathfrak{c}}(n^2,1/2+it,\chi_4)=0, \quad \mathfrak{c}=0, \frac{1}{2}, \frac{1}{4}, \frac{1}{12}. 
 \end{equation}
 
 If $l$ is odd, it follows from Lemma \ref{lem:64inf} that
\begin{equation}
 \phi_{\infty,\mathfrak{c}}(l^2,1/2+it,\chi_4)=0, \quad \mathfrak{c}=\infty, \frac{1}{32}, \frac{1}{16}, \frac{1}{48}. 
 \end{equation}

It is left to consider the case $\mathfrak{c}=\frac{1}{8u}$, $u=1,3,5,7$. Note that for odd  $n$ we have $e(n^2u/8)=e(u/8)$. Consequently, applying \eqref{eq:sm}, \eqref{eq:64inf1/8u} and \eqref{eq:6401/8u}  we obtain 
\begin{multline}\label{eq:sumoverlodd}
\sum_{(l,2)=1}\frac{1}{l^{s+2it}}
 \overline{\phi_{\infty,1/(8u)}(l^2,1/2+it,\chi_4)}n^{2it}\phi_{0,1/(8u)}(n^2,1/2+it,\chi_4)\\=
\frac{n^{2it}\sigma_{-2it}(\chi_4;n^2)\chi_4(-u)e(u/4)}{64L(\chi_4,1+2it)L(\chi_4,1-2it)}\sum_{(l,2)=1}\frac{\sigma_{2it}(\chi_4;l^2)}{l^{s+2it}}.
\end{multline}

Using \eqref{eq:zs} with $z:=s+2it$, $s:=2it$ we show that
\begin{multline}\label{eq:sumoverlodd1}
\sum_{(l,2)=1}\frac{\sigma_{2it}(\chi_4;l^2)}{l^{s+2it}}=\left( 1-2^{-s-2it}\right)\sum_{l=1}^{\infty}\frac{\sigma_{2it}(\chi_4;l^2)}{l^{s+2it}}\\
=\left( 1-2^{-s-2it}\right)\frac{1-2^{2it-s}}{1-2^{-2s}}\frac{L(\chi_4,s)\zeta(s+2it)\zeta(s-2it)}{\zeta(2s)}.
\end{multline}

Substituting \eqref{eq:sumoverlodd1} into  \eqref{eq:sumoverlodd}, summing over $u=1,3,5,7$ and using  the identity
\begin{equation}
\sum_{u=1,3,5,7}\chi_4(-u)e(u/4)=-4i,
\end{equation}
we infer
\begin{multline}\label{eq:c11}
C^{1,1}_{\text{odd}}=\frac{L(\chi_4,s)}{4\pi^{s-1/2}(1-2^{-2s})}
\frac{1}{2\pi i}\int_{-\infty}^{\infty}\frac{\psi_D(t)\sinh(\pi t)}{t\cosh{(\pi t)}}\sigma_{-2it}(\chi_4;n^2)\\ \times n^{2it}
\frac{\zeta(s+2it)\zeta(s-2it)}{L(\chi_4,1+2it)L(\chi_4,1-2it)}
(1-2^{2it-s})(1-2^{-2it-s})dt.
\end{multline}

Now let us evaluate $C^{1,2}_{\text{odd}}$. Using \eqref{eq:phi0c}  we conclude that it is left to consider the following cases: $\mathfrak{c}=\frac{1}{8u}$, $u=1,3,5,7$ and  $\mathfrak{c}=\frac{1}{16}, \frac{1}{48}$,  $\mathfrak{c}=\frac{1}{32}$,  $\mathfrak{c}=\infty$.

Assume that $\mathfrak{c}=\frac{1}{8u}$. Using the fact that $e(n^2u/8)=e(u/8)$ for odd  $n$, and applying \eqref{eq:sm}, \eqref{eq:64inf1/8u} and \eqref{eq:6401/8u}  we obtain 
\begin{multline}
\sum_{l\equiv 0\pmod{2}}\frac{1}{l^{s+2it}}
 \overline{\phi_{\infty,1/(8u)}(l^2,1/2+it,\chi_4)}n^{2it}\phi_{0,1/(8u)}(n^2,1/2+it,\chi_4)\\=
\frac{n^{2it}\sigma_{-2it}(\chi_4;n^2)\chi_4(-u)e(u/8)}{64L(\chi_4,1+2it)L(\chi_4,1-2it)}\sum_{l\equiv 0\pmod{2}}\frac{e(l^2u/8)\sigma_{2it}(\chi_4;l^2)}{l^{s+2it}}.
\end{multline}

Note that $e(m^2u/2)=1$ if $m$ is even and $e(m^2u/2)=-1$ if $m$ is odd. Therefore, 
\begin{multline}
\sum_{l\equiv 0\pmod{2}}\frac{e(l^2u/8)\sigma_{2it}(\chi_4;l^2)}{l^{s+2it}}=\frac{1}{2^{s+2it}}\sum_{l=1}^{\infty}\frac{e(l^2u/2)\sigma_{2it}(\chi_4;l^2)}{l^{s+2it}}\\
=\frac{1}{2^{s+2it}}\sum_{l\equiv 0 \pmod{2}}\frac{\sigma_{2it}(\chi_4;l^2)}{l^{s+2it}}-\frac{1}{2^{s+2it}}\sum_{l\equiv 1 \pmod{2}}\frac{\sigma_{2it}(\chi_4;l^2)}{l^{s+2it}}.
\end{multline}
Since 
\begin{equation}
\sum_{u=1,3,5,7}\chi_4(-u)e(u/8)=0,
\end{equation}
the contribution of the part  with $\mathfrak{c}=\frac{1}{8u}$, $u=1,3,5,7$ to  $C^{1,2}_{\text{odd}}$ is zero.

Now assume that $\mathfrak{c}=\frac{1}{16u}$, $u=1,3$. Applying \eqref{eq:sm}, \eqref{eq:64inf1/16u} and \eqref{eq:6401/16u}, we have
\begin{multline}\label{eq:l022}
\sum_{(l,2)=1}\frac{n^{2it}}{l^{s+2it}}
 \overline{\phi_{\infty,1/(16u)}(l^2,1/2+it,\chi_4)}\phi_{0,1/(16u)}(n^2,1/2+it,\chi_4)\\=
\frac{n^{2it}\sigma_{-2it}(\chi_4;n^2)\chi_4(-u)e(u/4)}{2^{5-2it}L(\chi_4,1+2it)L(\chi_4,1-2it)}\sum_{l\equiv 0\pmod{2}}\frac{e(l^2u/16)\sigma_{2it}(\chi_4;l^2)}{l^{s+2it}}.
\end{multline}

Using \eqref{eq:sigmachi4} we infer
\begin{multline}
\sum_{l\equiv 0\pmod{2}}\frac{e(l^2u/16)\sigma_{2it}(\chi_4;l^2)}{l^{s+2it}}=\frac{1}{2^{s+2it}}\sum_{l=1}^{\infty}\frac{e(l^2u/4)\sigma_{2it}(\chi_4;l^2)}{l^{s+2it}}\\
=\frac{1}{4^{s+2it}}\sum_{l=1}^{\infty}\frac{\sigma_{2it}(\chi_4;l^2)}{l^{s+2it}}+\frac{e(u/4)}{2^{s+2it}}\sum_{(l,2)=1}\frac{\sigma_{2it}(\chi_4;l^2)}{l^{s+2it}}.
\end{multline}
Rewriting the sum over $l$ as 
$$\sum_{(l,2)=1}=\sum_{l=1}^{\infty}-\sum_{l \equiv 0\pmod{2}}$$
and applying \eqref{eq:zs} with $z:=s+2it$, $s:=2it$, we show that
\begin{multline}\label{eq:l02}
\sum_{l\equiv 0\pmod{2}}\frac{e(l^2u/16)\sigma_{2it}(\chi_4;l^2)}{l^{s+2it}}
=\frac{1-2^{2it-s}}{2^{s+2it}(1-2^{-2s})}
\\ \times \frac{L(\chi_4,s)\zeta(s+2it)\zeta(s-2it)}{\zeta(2s)}
\left(2^{-s-2it}+e(u/4)\left( 1-2^{-s-2it}\right) \right).
\end{multline}

Substituting \eqref{eq:l02} into \eqref{eq:l022} and summing over $u=1,2$ we conclude that the contribution of the part  with $\mathfrak{c}=\frac{1}{16u}$, $u=1,2$ to  $C^{1,2}_{\text{odd}}$ is equal to
\begin{multline}\label{mult:c10dd1}
\frac{n^{2it}\sigma_{-2it}(\chi_4;n^2)}{16i}\frac{1-2^{2it-s}}{2^{2s+2it}(1-2^{-2s})}
\\ \times \frac{L(\chi_4,s)\zeta(s+2it)\zeta(s-2it)}{\zeta(2s)L(\chi_4,1+2it)L(\chi_4,1-2it)}.
\end{multline}

Next, assume that $\mathfrak{c}=\frac{1}{32}$. By Corollary \ref{cor:zeros} we have
$$\phi_{\infty,1/(32)}(l^2,1/2+it,\chi_4)=0.$$

Finally, assume that $\mathfrak{c}=\infty$. As a consequence of  \eqref{eq:64infphi1/64},  \eqref{eq:0infphi1/64}, \eqref{eq:sm}, \eqref{eq:tm}, \eqref{eq:zs}, we obtain
\begin{multline}\label{mult:c10dd2}
\sum_{l \equiv 0\pmod{2}}\frac{n^{2it}}{l^{s+2it}}
 \overline{\phi_{\infty,\infty}(l^2,1/2+it,\chi_4)}\phi_{0,\infty}(n^2,1/2+it,\chi_4)=\\
\frac{n^{2it}\sigma_{-2it}(\chi_4;n^2)}{16i}\frac{1-2^{-2it-s}}{2^{2s-2it}(1-2^{-2s})}
 \frac{L(\chi_4,s)\zeta(s+2it)\zeta(s-2it)}{\zeta(2s)L(\chi_4,1+2it)L(\chi_4,1-2it)}.
\end{multline}

Combining  \eqref{c1odd}, \eqref{mult:c10dd1}, \eqref{mult:c10dd2} we find that
\begin{multline}\label{eq:c12}
C^{1,2}_{\text{odd}}=\frac{L(\chi_4,s)}{4\pi^{s-1/2}(1-2^{-2s})}
\frac{1}{2\pi i}\int_{-\infty}^{\infty}\frac{\psi_D(t)\sinh(\pi t)}{t\cosh{(\pi t)}}\sigma_{-2it}(\chi_4;n^2)\\ \times n^{2it}
\frac{\zeta(s+2it)\zeta(s-2it)}{L(\chi_4,1+2it)L(\chi_4,1-2it)}\Biggl( 
\frac{2^{-2it}+2^{2it}-2^{1-s}}{2^{2s}}
\Biggr)dt.
\end{multline}

The statement follows  by summing \eqref{eq:c11} and  \eqref{eq:c12}.

\end{proof}
%%%%%%%%%%%%%%%%%%%%%%%%%%%%%%%%%%%%%%%%%%%%%%%%%%%
\begin{lem}
For $\Re{s}>1$ the following identity holds
\begin{multline}
C^{2}_{\text{odd}}=\frac{L(\chi_4,s)(1-2^{-s})}{4\pi^{s-1/2}(1-2^{-2s})}
\frac{1}{2\pi i}\int_{-\infty}^{\infty}\frac{\psi_D(t)\sinh(\pi t)}{t\cosh{(\pi t)}}\sigma_{-2it}(\chi_4;n^2)\\ \times n^{2it}
\frac{\zeta(s+2it)\zeta(s-2it)}{L(\chi_4,1+2it)L(\chi_4,1-2it)}
\frac{2^{-2it}+2^{2it}-2^{1-s}}{2^{s}}dt.
\end{multline}

\end{lem}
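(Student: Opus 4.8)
The plan is to argue exactly as in the preceding evaluation of $C^{1}_{\text{odd}}$. For $\Re{s}>1$ all series converge absolutely, so in \eqref{c2odd} the $t$-independent factor $1-2^{-s}$ may be pulled out of the integral and the $l$-summation may be interchanged with the integral and the finite sum over the singular cusps of $\Gamma_0(16)$ (which are $0,1/2,1/4,1/8,1/12,\infty$ by Lemma \ref{lem:singularcusps}); it then remains to compute, for each such $\mathfrak{c}$, the quantity $n^{2it}\phi_{0,\mathfrak{c}}(n^2,1/2+it,\chi_4)\sum_{l\ge 1}l^{-s-2it}\overline{\phi_{\infty,\mathfrak{c}}(l^2,1/2+it,\chi_4)}$, using the Fourier coefficients for $N=16$ in \eqref{eq:1600}, \eqref{eq:1601/2}, \eqref{eq:1601/4u}, \eqref{eq:160inf}, \eqref{eq:16inf1/4u} and \eqref{eq:16infinf} together with the Dirichlet series \eqref{eq:zs}.

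The first step is to discard all but three cusps. Since $n$ is odd, $\delta_2(n^2)=\delta_4(n^2)=0$, so $\phi_{0,0}(n^2,1/2+it,\chi_4)=0$ by \eqref{eq:1600} and $\phi_{0,1/2}(n^2,1/2+it,\chi_4)=0$ by \eqref{eq:1601/2} (also by Corollary \ref{cor:zeros}), while $\phi_{\infty,1/8}(l^2,1/2+it,\chi_4)=0$ for every $l$ by Corollary \ref{cor:zeros}. Hence only $\mathfrak{c}=1/4,1/12,\infty$ contribute.

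For $\mathfrak{c}=1/(4u)$ with $u\in\{1,3\}$ I would insert \eqref{eq:1601/4u} and \eqref{eq:16inf1/4u}. Since $n^2\equiv l^2\equiv 1\pmod{8}$ for odd $n$ and odd $l$, one has $e(n^2u/4)=e(u/4)$, while $e(l^2u/4)=e(u/4)$ for odd $l$ and $e(l^2u/4)=1$ for even $l$; combined with $\sigma_{2it}(\chi_4;(2m)^2)=\sigma_{2it}(\chi_4;m^2)$ from \eqref{eq:sigmachi4}, the even part of $\sum_{l\ge1}e(l^2u/4)\sigma_{2it}(\chi_4;l^2)l^{-s-2it}$ equals $2^{-s-2it}$ times the whole sum, and the whole sum is $Z(s+2it,2it)=\frac{1-2^{2it-s}}{1-2^{-2s}}\frac{L(\chi_4,s)\zeta(s+2it)\zeta(s-2it)}{\zeta(2s)}$ by \eqref{eq:zs}. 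Thus for fixed $u$ the $l$-sum becomes $\bigl(e(u/4)+(1-e(u/4))2^{-s-2it}\bigr)Z(s+2it,2it)$; summing the $u=1,3$ contributions weighted by the characters $\chi_4(-u)e(u/4)$ arising from $\phi_{0,1/(4u)}(n^2,\cdot)$ and $\phi_{\infty,1/(4u)}(l^2,\cdot)$, and using $\sum_{u=1,3}\chi_4(-u)e(u/4)^2=0$ together with $\sum_{u=1,3}\chi_4(-u)\bigl(e(u/4)-e(u/4)^2\bigr)=-2i$, everything collapses to a single term carrying the factor $2^{-s-2it}(1-2^{2it-s})=2^{-s-2it}-2^{-2s}$.

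For $\mathfrak{c}=\infty$ I would use \eqref{eq:160inf} and \eqref{eq:16infinf}; the factor $\delta_4(l^2)$ in $\phi_{\infty,\infty}(l^2,\cdot)$ forces $l$ to be even, so writing $l=2m$ turns the $l$-sum into $\sum_{m\ge1}\sigma_{-2it}(\chi_4;m^2)m^{-(s-2it)}=Z(s-2it,-2it)=\frac{1-2^{-s-2it}}{1-2^{-2s}}\frac{L(\chi_4,s)\zeta(s+2it)\zeta(s-2it)}{\zeta(2s)}$, which contributes the factor $2^{2it-s}(1-2^{-s-2it})=2^{2it-s}-2^{-2s}$. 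Adding the two, the combined bracket is $(2^{-s-2it}-2^{-2s})+(2^{2it-s}-2^{-2s})=(2^{-2it}+2^{2it}-2^{1-s})/2^{s}$, exactly the shape in the statement; collecting the remaining constants, namely $\tau(\chi_4)=2i$, $\overline{\tau(\chi_4)}=-2i$, the powers of $2$ produced by $4^{2s}$, $8^{2s}$, $16^{2s}$ at $s=1/2+it$, and the factor $i/2$ that turns $\frac{1}{4\pi}\int$ into $\frac{1}{2\pi i}\int$, yields the asserted identity. I expect the main obstacle to be precisely this last bookkeeping for the pair $\mathfrak{c}=1/4,1/12$: each of these cusps individually contributes terms with awkward $1\pm i$ coefficients, and it is only after the cancellations above that they align with the $\mathfrak{c}=\infty$ term on the common profile $\zeta(s+2it)\zeta(s-2it)/\bigl(L(\chi_4,1+2it)L(\chi_4,1-2it)\bigr)$ with exactly the right power of $2$.
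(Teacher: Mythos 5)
Your proposal is correct and follows essentially the same route as the paper: discard the cusps $0$, $1/2$, $1/8$ via the vanishing of the relevant Fourier coefficients, evaluate the $l$-sums at $\mathfrak{c}=1/4,1/12,\infty$ using \eqref{eq:zs}, and combine; your character-sum bookkeeping ($\sum_{u}\chi_4(-u)e(u/4)^2=0$, $\sum_u\chi_4(-u)(e(u/4)-e(u/4)^2)=-2i$) is an algebraically equivalent rearrangement of the paper's $\sum_u\chi_4(-u)=0$, $\sum_u\chi_4(-u)e(u/4)=-2i$, and the resulting $2$-power profiles $2^{-s-2it}-2^{-2s}$ and $2^{2it-s}-2^{-2s}$ match \eqref{eq:suml04u} and \eqref{eq:suml0inf} exactly.
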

\begin{proof}

The list of singular cusps for $\Gamma_0(16)$ is given in Lemma \ref{lem:singularcusps}. 

Consider $\mathfrak{c}=0$ and $\mathfrak{c}=1/2$ . As a consequence of \eqref{eq:1600} and Corollary \ref{cor:zeros} we obtain that for odd $n$
\begin{equation}
 \phi_{0,0}(n^2,1/2+it,\chi_4)=\phi_{0,1/2}(n^2,1/2+it,\chi_4)=0.
 \end{equation}
 
Consider $\mathfrak{c}=\frac{1}{8}$. By Corollary \ref{cor:zeros} we have
\begin{equation}
\phi_{\infty,1/8}(l^2,1/2+it,\chi_4)=0.
\end{equation}

Consider $\mathfrak{c}=\frac{1}{4u}$, $u=1,3$. Using \eqref{eq:16inf1/4u}, \eqref{eq:sm}, we compute the sum over $l$ in \eqref{c2odd}
\begin{multline}\label{eq:oddcasechi41}
\sum_{l=1}^{\infty}\frac{\overline{\phi_{\infty,1/(4u)}(l^2,1/2+it,\chi_4)}}{l^{s+2it}} \\=
\frac{\chi_4(-u)}{4^{1-2it}L(\chi_4,1-2it)}\sum_{l=1}^{\infty}\frac{\sigma_{2it}(\chi_4;l^2)e(l^2u/4)}{l^{s+2it}}.
\end{multline}

The last sum can be split into two parts
\begin{multline}
\sum_{l=1}^{\infty}\frac{\sigma_{2it}(\chi_4;l^2)e(l^2u/4)}{l^{s+2it}}=\frac{1}{2^{s+2it}}\sum_{l=1}^{\infty}\frac{\sigma_{2it}(\chi_4;4l^2)}{l^{s+2it}}\\+e(u/4)\sum_{l\equiv 1\pmod{2}}\frac{\sigma_{2it}(\chi_4;l^2)}{l^{s+2it}}.
\end{multline}

Furthermore,
\begin{equation}
\sum_{l\equiv 1\pmod{2}}\frac{\sigma_{2it}(\chi_4;l^2)}{l^{s+2it}}=\sum_{l=1}^{\infty}\frac{\sigma_{2it}(\chi_4;l^2)}{l^{s+2it}}-\frac{1}{2^{s+2it}}\sum_{l=1}^{\infty}\frac{\sigma_{2it}(\chi_4;4l^2)}{l^{s+2it}}.
\end{equation}
Note that  $\sigma_{2it}(\chi_4;4l^2)=\sigma_{2it}(\chi_4;l^2)$, and therefore,
\begin{multline}\label{eq:oddcasechi4}
\sum_{l=1}^{\infty}\frac{\sigma_{2it}(\chi_4;l^2)e(l^2u/4)}{l^{s+2it}}\\=\left( \frac{1}{2^{s+2it}}+ e(u/4)(1-2^{-s-2it})\right)
\sum_{l=1}^{\infty}\frac{\sigma_{2it}(\chi_4;4l^2)}{l^{s+2it}}.
\end{multline}
Substituting \eqref{eq:oddcasechi4} into \eqref{eq:oddcasechi41} and applying \eqref{eq:zs} with $z:=s+2it$ and $s:=2it$ yields
\begin{multline}
\sum_{l=1}^{\infty}\frac{\overline{\phi_{\infty,1/(4u)}(l^2,1/2+it,\chi_4)}}{l^{s+2it}} =
\left( \frac{1}{2^{s+2it}}+ e(u/4)(1-2^{-s-2it})\right)\\ \times \frac{\chi_4(-u)}{4^{1-2it}L(\chi_4,1-2it)}\frac{1-2^{2it-s}}{1-2^{-2s}}\frac{L(\chi_4,s)\zeta(s+2it)\zeta(s-2it)}{\zeta(2s)}.
\end{multline}
Using \eqref{eq:1601/4u}, \eqref{eq:sm}, and the fact that $n$ is odd, we infer
\begin{multline}\label{eq:suml04u}
\sum_{l=1}^{\infty}\frac{\overline{\phi_{\infty,1/(4u)}(l^2,1/2+it,\chi_4)}}{l^{s+2it}} n^{2it}\phi_{0,1/(4u)}(n^2,1/2+it,\chi_4)\\=
\left( \frac{1}{2^{s+2it}}+ e(u/4)(1-2^{-s-2it})\right) n^{2it}\sigma_{-2it}(\chi_4;n^2)\\ \times \frac{\chi_4(-u)e(u/4)}{16}\frac{1-2^{2it-s}}{1-2^{-2s}}\frac{L(\chi_4,s)\zeta(s+2it)\zeta(s-2it)}{\zeta(2s)L(\chi_4,1-2it)L(\chi_4,1+2it)}.
\end{multline}

Consider $\mathfrak{c}=\infty$. In order to evaluate the sum over $l$ in this case we apply  \eqref{eq:16infinf}, \eqref{eq:tm}, and \eqref{eq:zs} with $z:=s-2it$, $s:=-2it$. Furthermore, using \eqref{eq:160inf},  \eqref{eq:sm} and the fact that $\tau(\chi_4)=2i$, we obtain
\begin{multline}\label{eq:suml0inf}
\sum_{l=1}^{\infty}\frac{1}{l^s} l^{-2it} \overline{\phi_{\infty,\infty}(l^2,1/2+it,\chi_4)}n^{2it}\phi_{0,\infty}(n^2,1/2+it,\chi_4)\\
=\frac{-i}{2^{3+s-2it}}\frac{1-2^{-2it-s}}{1-2^{-2s}}n^{2it}\sigma_{-2it}(\chi_4;n^2)\frac{L(\chi_4,s)\zeta(s+2it)\zeta(s-2it)}{\zeta(2s)L(\chi_4,1+2it)L(\chi_4,1-2it)}.
\end{multline}

Summing \eqref{eq:suml0inf}, \eqref{eq:suml04u} for $u=1,3$ and noting that
\begin{equation}
\sum_{u=1,3}\chi_4(-u)=0, \quad \sum_{u=1,3}\chi_4(-u)e(u/4)=-2i,
\end{equation}
we prove the lemma.

\end{proof}

Combining the previous two lemmas, we prove the following result.

%%%%%%%%%%%%%%%%%%%%%%%%%%%%%%%%%%%%%%%%%%%%%%%%
\begin{cor}
For $\Re{s}>1$ we have
\begin{multline}
C^{1}_{\text{odd}}+C^{2}_{\text{odd}}=\frac{L(\chi_4,s)}{4\pi^{s-1/2}}
\frac{1}{2\pi i}\int_{-\infty}^{\infty}\frac{\psi_D(t)\sinh(\pi t)}{t\cosh{(\pi t)}}\\ \times
\frac{\zeta(s+2it)\zeta(s-2it)}{L(\chi_4,1+2it)L(\chi_4,1-2it)}
n^{2it}\sigma_{-2it}(\chi_4;n^2)dt.
\end{multline}

\end{cor}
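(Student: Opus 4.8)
The plan is to add the two identities from the preceding two lemmas and simplify the resulting $s$- and $t$-dependent coefficient inside the integrand. Both expressions for $C^{1}_{\text{odd}}$ and $C^{2}_{\text{odd}}$ carry the common factor
\[
\frac{L(\chi_4,s)}{4\pi^{s-1/2}(1-2^{-2s})}\,\frac{1}{2\pi i}\int_{-\infty}^{\infty}\frac{\psi_D(t)\sinh(\pi t)}{t\cosh(\pi t)}\,\frac{\zeta(s+2it)\zeta(s-2it)}{L(\chi_4,1+2it)L(\chi_4,1-2it)}\,n^{2it}\sigma_{-2it}(\chi_4;n^2)\,dt ,
\]
so it suffices to show that the two bracketed factors appearing inside the two integrands add up to $1-2^{-2s}$. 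Once this is done, the factor $1-2^{-2s}$ cancels the denominator in the common prefactor, leaving precisely the asserted formula. Since both input lemmas hold for $\Re{s}>1$, where the kernel $\psi_D(t)/\cosh(\pi t)$ decays rapidly and all the Dirichlet series in $l$ converge absolutely, the sum of the two integrals is the integral of the sum and no analytic subtlety intervenes.

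To carry out the simplification I would introduce the shorthand $a:=2^{-s}$ and $b:=2^{2it}+2^{-2it}$. Using $2^{2s}=a^{-2}$, $2^{1-s}=2a$, and $2^{\pm 2it-s}=2^{\pm 2it}a$, the bracket in the formula for $C^{1}_{\text{odd}}$ becomes
\[
(1-2^{2it-s})(1-2^{-2it-s})+\frac{2^{-2it}+2^{2it}-2^{1-s}}{2^{2s}}
=(1-ab+a^{2})+a^{2}(b-2a)
=1-ab+a^{2}+a^{2}b-2a^{3},
\]
while the bracket in the formula for $C^{2}_{\text{odd}}$ becomes
\[
(1-2^{-s})\,\frac{2^{-2it}+2^{2it}-2^{1-s}}{2^{s}}=(1-a)\,a\,(b-2a)=ab-2a^{2}-a^{2}b+2a^{3}.
\]
Adding the two expressions, every term containing $b$ and every term of degree $3$ in $a$ cancels, and what remains is $1-a^{2}=1-2^{-2s}$, exactly as needed.

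There is no genuine obstacle here: the statement is a purely mechanical consequence of the two previous lemmas together with this elementary identity among finitely many powers of $2$. The only point requiring care is the bookkeeping of the exponential factors $2^{\pm 2it}$, $2^{-s}$, $2^{-2s}$, $2^{1-s}$ in the two brackets; the substitution $a=2^{-s}$, $b=2^{2it}+2^{-2it}$ makes the cancellation transparent and, being manifestly real-analytic in $t$ and holomorphic in $s$ on $\Re{s}>1$, it is valid termwise inside the integral.
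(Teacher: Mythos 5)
Your computation is correct and is exactly the argument the paper intends: the corollary is stated with only the remark ``Combining the previous two lemmas, we prove the following result,'' and the content of that combination is precisely the verification that the two $s,t$-dependent brackets sum to $1-2^{-2s}$, cancelling the common denominator. Your substitution $a=2^{-s}$, $b=2^{2it}+2^{-2it}$ carries this out cleanly, so the proposal matches the paper's proof.
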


\begin{lem}\label{lem:soh2}
For $0<\Re{s}<1$, $s\neq 1/2$ the following holds
\begin{multline}
C^{1}_{\text{odd}}+C^{2}_{\text{odd}}=\frac{1}{2}M^{C}(n,s)+\frac{L(\chi_4,s)}{4\pi^{s-1/2}}
\frac{1}{2\pi i}\int_{-\infty}^{\infty}\frac{\psi_D(t)\sinh(\pi t)}{t\cosh{(\pi t)}}\\ \times
\frac{\zeta(s+2it)\zeta(s-2it)}{L(\chi_4,1+2it)L(\chi_4,1-2it)}
n^{2it}\sigma_{-2it}(\chi_4;n^2)dt,
\end{multline}
where $M^{C}(n,s)$ is defined by \eqref{eq:mcns}.
\end{lem}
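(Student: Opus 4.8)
The plan is to reproduce, line for line, the argument used for Lemma~\ref{lem:soh} in the even case, with the even-case corollary replaced by the corollary stated just above. The starting point is the identity
\[
C^{1}_{\text{odd}}+C^{2}_{\text{odd}}=\frac{L(\chi_4,s)}{4\pi^{s-1/2}}\,\frac{1}{2\pi i}\int_{-\infty}^{\infty}\frac{\psi_D(t)\sinh(\pi t)}{t\cosh(\pi t)}\,\frac{\zeta(s+2it)\zeta(s-2it)}{L(\chi_4,1+2it)L(\chi_4,1-2it)}\,n^{2it}\sigma_{-2it}(\chi_4;n^2)\,dt,
\]
valid for $\Re s>1$. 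First I would substitute $z:=2it$ to rewrite the right-hand side as $F(s)=\frac{1}{2\pi i}\int_{(0)}f(s,z)\,dz$ with
\[
f(s,z)=\frac{L(\chi_4,s)}{4\pi^{s-1/2}}\,g\left(\tfrac{z}{2i}\right)\frac{\zeta(s+z)\zeta(s-z)}{L(\chi_4,1+z)L(\chi_4,1-z)}\,n^{z}\sigma_{-z}(\chi_4;n^2),\qquad g(t):=\frac{\psi_D(t)\sinh(\pi t)}{t\cosh(\pi t)}.
\]

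Next I would locate the poles of $f(s,z)$ in the strip $|\Re z|\le 1-\delta$. The factor $\sigma_{-z}(\chi_4;n^2)$ is entire in $z$; $L(\chi_4,1+z)$ and $L(\chi_4,1-z)$ are entire and, since $0<\Re s<1$, non-vanishing at $z=\pm(1-s)$ because there $\Re(1+z)>1$ and $\Re(1-z)>1$; and by Lemma~\ref{lem:psiD} one has $g(z/(2i))=\tfrac{2\pi i}{\cos(\pi z/2)}\bigl(h_1(\tfrac{z}{2i})+h_1(-\tfrac{z}{2i})+h_2(\tfrac{z}{2i})\bigr)$, which is analytic for $|\Re z|<1$ once $s\neq 1/2$ is assumed: the only Gamma factor that could become singular is the $\Gamma(s-1/2)$ entering $h_1(\tfrac{z}{2i})$ at $z=s-1$ (cf.\ Lemma~\ref{lem:h1h2h3}), and $\cos(\pi z/2)$ vanishes only at $z=\pm1$. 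Hence in this region the only singularities of $f(s,z)$ are the simple pole of $\zeta(s+z)$ at $z_1=1-s$ and that of $\zeta(s-z)$ at $z_2=s-1$, which is exactly the hypothesis of Lemma~\ref{lem:Fs}; applying it gives, for $0<\Re s<1$, $s\neq 1/2$,
\[
C^{1}_{\text{odd}}+C^{2}_{\text{odd}}=\frac{1}{2\pi i}\int_{(0)}f(s,z)\,dz+\text{Res}_{z_1}f(s,z)-\text{Res}_{z_2}f(s,z).
\]

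Then I would evaluate the two residues. From $\text{Res}_{z=1-s}\zeta(s+z)=1$ the overall factor $L(\chi_4,s)$ cancels and $\text{Res}_{z_1}f=\tfrac{1}{4\pi^{s-1/2}}g(\tfrac{1-s}{2i})\tfrac{\zeta(2s-1)}{L(\chi_4,2-s)}n^{1-s}\sigma_{s-1}(\chi_4;n^2)$; from $\text{Res}_{z=s-1}\zeta(s-z)=-1$ together with the evenness of $g$ (which follows from the explicit shape of $h_1,h_2$ in Lemma~\ref{lem:psiD}) one gets $-\text{Res}_{z_2}f=\tfrac{1}{4\pi^{s-1/2}}g(\tfrac{1-s}{2i})\tfrac{\zeta(2s-1)}{L(\chi_4,2-s)}n^{s-1}\sigma_{1-s}(\chi_4;n^2)$. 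Adding these, factoring out $n^{1-s}$ to obtain the bracket $\sigma_{s-1}(\chi_4;n^2)+\sigma_{1-s}(\chi_4;n^2)/n^{2-2s}$, inserting the closed form of $g(\tfrac{1-s}{2i})$ from the corollary containing \eqref{eq:psiD1-s}, and using $(n/2)^{s-1}n^{1-s}=2^{1-s}$, the residue contribution collapses to exactly $\tfrac12 M^{C}(n,s)$ as in \eqref{eq:mcns} — half of the even-case answer, since here the integrand carries only the single term $n^{2it}\sigma_{-2it}(\chi_4;n^2)$ instead of the symmetric pair appearing in Lemma~\ref{lem:soh}. Undoing the substitution $z=2it$ in the remaining $(0)$-integral returns the stated spectral integral, completing the proof.

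The one step that requires genuine care rather than bookkeeping is the pole analysis in the second paragraph: one must check that, while the contour is deformed in the passage from $\Re s>1$ into $0<\Re s<1$, no singularity of $g(z/(2i))$ other than $z_1,z_2$ (arising from the hypergeometric representations of $h_1,h_2$ or from the factor $\cos(\pi z/2)$) is swept over, and that the contour shift in Lemma~\ref{lem:Fs} is legitimate given the decay of $\psi_D$ on vertical lines. Both follow from the explicit formulas of Section~\ref{sec:special}, but they are the points where the argument is not purely formal; everything else is identical to the even case already treated in Lemma~\ref{lem:soh}.
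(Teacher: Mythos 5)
Your proposal is correct and follows exactly the route the paper takes: the paper's proof of this lemma is literally the one-line remark that it is the same as Lemma \ref{lem:soh}, i.e.\ substitute $z=2it$, apply Lemma \ref{lem:Fs} to pick up the residues of $\zeta(s+z)$ and $\zeta(s-z)$ at $z=1-s$ and $z=s-1$, and evaluate them via \eqref{eq:psiD1-s}; your residue bookkeeping (including the cancellation of $L(\chi_4,s)$ and the factor $\tfrac12$ coming from the single term $n^{2it}\sigma_{-2it}(\chi_4;n^2)$ in place of the symmetric pair) reproduces $\tfrac12 M^{C}(n,s)$ correctly. The only imprecision is your blanket claim that $g(z/(2i))$ is analytic in all of $|\Re z|<1$ (the Gamma factors in $h_1,h_2$ also have candidate poles at $z=\pm s$), but this is harmless since the deformation in Lemma \ref{lem:Fs} only sweeps $\epsilon$-neighbourhoods of $z=\pm(1-s)$.
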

\begin{proof}
The proof is the same as the one of Lemma \ref{lem:soh}.
\end{proof}

%%%%%%%%%%%%%%%%%%%%%%%%%%%%%%%%%%%%%%%%%%%%%%%%%%%%%%%%%%%%%%%%%%%
\section{Contribution of the discrete and homomorphic spectra}\label{sect:dischol}

Applying the Kuznetsov trace formula \eqref{eq:Kuznetsov} to the sums of Kloosterman sums in \eqref{eq:mndeven}  for even $n$, we find that the discrete spectrum is a sum of 
two twisted moments of symmetric square $L$-functions associated to Maass cusp forms of level $4$ with nebentypus $\chi_4$, namely
\begin{equation}
D_{1}^{\text{even}}=-\frac{2^{1-s}\pi^{1/2-s}i}{1-2^{-2s}}\sum_{f\in H(4,\chi_4)}\frac{\psi_{D}(t_f)}{\cosh(\pi t_f)}\rho_{f_{\infty}}(n^{2}/4)\overline{L(s,\text{sym}^2 f_{\infty})}
\end{equation}
and
\begin{equation}
D_{2}^{\text{even}}=\frac{2\pi^{1/2-s}}{1-2^{-2s}}\sum_{f\in H(4,\chi_4)}\frac{\psi_{D}(t_f)}{\cosh(\pi t_f)}\rho_{f_{0}}(n^{2}/4)\overline{L(s,\text{sym}^2 f_{\infty})}.
\end{equation}
Similarly, the holomorphic spectrum is a sum of two twisted moments of symmetric square $L$-functions associated to holomorphic cusp forms of level $4$ with nebentypus $\chi_4$
given by 
\begin{multline}
H_{1}^{\text{even}}=-\frac{2^{1-s}\pi^{1/2-s} i}{1-2^{-2s}}\sum_{\substack{k>1\\k \text{ odd}}}\psi_{H}(k)\Gamma(k)
\sum_{f\in H_k(4,\chi_4)}\rho_{f_{\infty}}\left(\frac{n^{2}}{4}\right)\overline{L(s,\text{sym}^2 f_{\infty})}
\end{multline}
and 
\begin{multline}
H_{2}^{\text{even}}=\frac{2\pi^{1/2-s}}{1-2^{-2s}}\sum_{\substack{k>1\\k \text{ odd}}}\psi_{H}(k)\Gamma(k)
\sum_{f\in H_k(4,\chi_4)}\rho_{f_{0}}\left(\frac{n^{2}}{4}\right)\overline{L(s,\text{sym}^2 f_{\infty})}.
\end{multline}

Next let us consider the case of odd $n$. The main difference with the previous case is that now we obtain moments of $L$-functions associated to forms of levels $64$ and $16$.
Indeed, applying the Kuznetsov trace formula \eqref{eq:Kuznetsov} to the sums of Kloosterman sums in \eqref{eq:mndodd}, we infer that the discrete spectrum consists of
\begin{equation}
D_{1}^{\text{odd}}=\frac{8\pi^{1/2-s}}{1-2^{-2s}}\sum_{f\in H(64,\chi_4)}\frac{\psi_{D}(t_f)}{\cosh(\pi t_f)}\rho_{f_{0}}(n^{2})\overline{L(s,\text{sym}^2 f_{\infty})}
\end{equation}
and
\begin{equation}
D_{2}^{\text{odd}}=\frac{4\pi^{1/2-s}}{1+2^{-s}}\sum_{f\in H(16,\chi_4)}\frac{\psi_{D}(t_f)}{\cosh(\pi t_f)}\rho_{f_{0}}(n^{2})\overline{L(s,\text{sym}^2 f_{\infty})}.
\end{equation}

Similarly, the holomorphic spectrum is a sum of these two parts:
\begin{multline}
H_{1}^{\text{odd}}=\frac{8\pi^{1/2-s}}{1-2^{-2s}}\sum_{\substack{k>1\\k \text{ odd}}}\psi_{H}(k)\Gamma(k)
\sum_{f\in H_k(64,\chi_4)}\rho_{f_{0}}(n^{2})\overline{L(s,\text{sym}^2 f_{\infty})},
\end{multline}

\begin{multline}
H_{2}^{\text{odd}}=\frac{4\pi^{1/2-s}}{1+2^{-s}}\sum_{\substack{k>1\\k \text{ odd}}}\psi_{H}(k)\Gamma(k)
\sum_{f\in H_k(16,\chi_4)}\rho_{f_{0}}(n^{2})\overline{L(s,\text{sym}^2 f_{\infty})}.
\end{multline}

%%%%%%%%%%%%%%%%%%%%%%%%%%%%%%%%%%%%%%%%%%%%%%%%%

\section{Proof of main theorems}\label{sec:mainthm}

\subsection{Proof of Theorems \ref{thm:main1} and \ref{thm:main2}}

The diagonal main terms in Theorems  \ref{thm:main1} and \ref{thm:main2} are computed in Lemma \ref{lem:diagonal}.
In order to evaluate the non-diagonal part, we apply the Kuznetsov trace formula \eqref{eq:Kuznetsov} to the sums of Kloosterman sums in \eqref{eq:mndeven} and \eqref{eq:mndodd}.
It follows from Lemmas \ref{lem:soh} and \ref{lem:soh2} that the contribution of the continuous spectrum is equal to $M^{C}(n,s)+\mathfrak{C}(n,s)$ in case of even $n$ and to $1/2M^{C}(n,s)+1/2\mathfrak{C}(n,s)$ in case of odd $n$,
where 
\begin{equation}
\mathfrak{C}(n,s)=C_{\text{even}}^{1}+C_{\text{even}}^{2}-M^{C}(n,s).
\end{equation}
Finally, the contribution of the discrete and holomorphic spectra is given in Section \ref{sect:dischol}. For the sake of brevity, we express the final result in terms of sums of moments \eqref{moment} using the following identites:
\begin{equation}
D_{\text{even}}^{1}+H_{\text{even}}^{1}=-\frac{2^{1-s}\pi^{1/2-s}i}{1-2^{-2s}}\mathfrak{M}_{\infty}(n^2/4,4,s),
\end{equation}
\begin{equation}
D_{\text{even}}^{2}+H_{\text{even}}^{2}=\frac{2\pi^{1/2-s}}{1-2^{-2s}}\mathfrak{M}_{0}(n^2/4,4,s),
\end{equation}
\begin{equation}
D_{\text{odd}}^{1}+H_{\text{odd}}^{1}=\frac{8\pi^{1/2-s}}{1-2^{-2s}}\mathfrak{M}_{0}(n^2,64,s),
\end{equation}
\begin{equation}
D_{\text{odd}}^{2}+H_{\text{odd}}^{2}=\frac{4\pi^{1/2-s}}{1+2^{-s}}\mathfrak{M}_{0}(n^2,16,s).
\end{equation}

\subsection{Main terms at the central point}\label{section:holomorphic}

As the final step we show that the main terms in Theorems \ref{thm:main1} and \ref{thm:main2} are holomorphic at the central point.

\begin{lem}
For $n$ even the following identity holds
\begin{multline}
M^{C}(n,1/2)+M^{D}_{\text{even}}(n,1/2)=
\\ \frac{\sigma_{-1/2}(\chi_4;n^2)+n^{-1}\sigma_{1/2}(\chi_4;n^2)}{2L(\chi_4,3/2)} \int_{0}^{\infty}\omega(y)\\ 
\times
\left( \log|y^2-n^2/4|+\frac{\pi}{2}\text{sgn}(y-n/2)-2\frac{L'(\chi_4,3/2)}{L(\chi_4,3/2)}-\log(2\pi)+3\gamma
\right)dy\\
-\frac{\sigma'_{-1/2}(\chi_4;n^2)-n^{-1}\sigma'_{1/2}(\chi_4;n^2)+2\log(n)n^{-1}\sigma_{1/2}(\chi_4;n^2)}{2L(\chi_4,3/2)}\int_{0}^{\infty}\omega(y)dy.
\end{multline}
\end{lem}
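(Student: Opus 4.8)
The plan is to expand $M^{C}(n,s)$ and $M^{D}_{\mathrm{even}}(n,s)$ as Laurent series at $s=1/2$, observe that their simple poles cancel, and collect the constant term of the sum. Write $M^{D}_{\mathrm{even}}(n,s)=\widehat{\omega}(1)\zeta(2s)g_{D}(s)$ and $M^{C}(n,s)=\Gamma(s-1/2)g_{C}(s)$, where $g_{D}(s)=\bigl(n^{-2s}\sigma_{s}(\chi_{4};n^{2})+\sigma_{-s}(\chi_{4};n^{2})\bigr)/L(\chi_{4},1+s)$ and
\[
g_{C}(s)=\frac{\bigl(\sigma_{s-1}(\chi_{4};n^{2})+n^{2s-2}\sigma_{1-s}(\chi_{4};n^{2})\bigr)\,\zeta(2s-1)\,I(s)}{2^{s-1}\pi^{s-1/2}L(\chi_{4},2-s)},
\]
with $I(s)=\sin(\pi s/2)\int_{0}^{n/2}\omega(y)(n^{2}/4-y^{2})^{1/2-s}\,dy+\cos(\pi s/2)\int_{n/2}^{\infty}\omega(y)(y^{2}-n^{2}/4)^{1/2-s}\,dy$. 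Both $g_{D}$ and $g_{C}$ are holomorphic in a neighbourhood of $s=1/2$: $\zeta(2s-1)$ is regular there with value $\zeta(0)=-1/2$, the Dirichlet $L$-values are finite and nonzero, and each integral converges absolutely for $\Re s<3/2$ since $\omega$ is compactly supported away from $0$.

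The heart of the matter is the cancellation of residues. Using $\zeta(2s)=\tfrac{1}{2(s-1/2)}+\gamma+O(s-1/2)$ and $\Gamma(s-1/2)=\tfrac{1}{s-1/2}-\gamma+O(s-1/2)$, the principal parts of $M^{D}_{\mathrm{even}}$ and $M^{C}$ are $\tfrac{\widehat{\omega}(1)g_{D}(1/2)}{2(s-1/2)}$ and $\tfrac{g_{C}(1/2)}{s-1/2}$. I would evaluate $g_{C}(1/2)$ using $\zeta(0)=-1/2$, $\sin(\pi/4)=\cos(\pi/4)=1/\sqrt{2}$ (hence $I(1/2)=\widehat{\omega}(1)/\sqrt{2}$, with $\widehat{\omega}(1)=\int_{0}^{\infty}\omega(y)\,dy$), and $2^{s-1}=1/\sqrt{2}$ at $s=1/2$; since both $g_{D}(1/2)$ and $g_{C}(1/2)$ carry the common factor $\sigma_{-1/2}(\chi_{4};n^{2})+n^{-1}\sigma_{1/2}(\chi_{4};n^{2})$, this gives $g_{C}(1/2)=-\tfrac12\widehat{\omega}(1)g_{D}(1/2)$. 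Therefore $M^{C}+M^{D}_{\mathrm{even}}$ is holomorphic at $s=1/2$, and its value there is the sum of constant terms
\[
\widehat{\omega}(1)\bigl[\gamma g_{D}(1/2)+\tfrac12 g_{D}'(1/2)\bigr]+\bigl[-\gamma g_{C}(1/2)+g_{C}'(1/2)\bigr]=\tfrac{3\gamma}{2}\widehat{\omega}(1)g_{D}(1/2)+\tfrac{\widehat{\omega}(1)}{2}g_{D}'(1/2)+g_{C}'(1/2),
\]
which already exhibits the coefficient $3\gamma$.

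It remains to compute $g_{D}'(1/2)$ and $g_{C}'(1/2)$ by logarithmic differentiation and to repackage the result, and the only genuine difficulty is this bookkeeping. The contributions organize themselves as follows: $L(\chi_{4},1+s)^{-1}$ and $L(\chi_{4},2-s)^{-1}$ together produce the $-2L'(\chi_{4},3/2)/L(\chi_{4},3/2)$ term; $\tfrac{d}{ds}\log\zeta(2s-1)|_{s=1/2}=2\zeta'(0)/\zeta(0)=2\log(2\pi)$, combined with $-\log(2\pi)$ coming from $2^{s-1}\pi^{s-1/2}$ in the denominator of $g_{C}$, then weighted by $g_{C}(1/2)=-\tfrac12\widehat{\omega}(1)g_{D}(1/2)$, produces the $-\log(2\pi)$ term; differentiating $I(s)$ yields the $\tfrac{\pi}{2}\operatorname{sgn}(y-n/2)$ term (from $\tfrac{d}{ds}\sin(\pi s/2)$ and $\tfrac{d}{ds}\cos(\pi s/2)$ at $s=1/2$, which merge the two half-line integrals, the final $+$ sign resulting from a double sign flip, one from $\int_{0}^{n/2}-\int_{n/2}^{\infty}$ and one from $g_{C}(1/2)$) and the $\log|y^{2}-n^{2}/4|$ term (from $\tfrac{d}{ds}(n^{2}/4-y^{2})^{1/2-s}$ and $\tfrac{d}{ds}(y^{2}-n^{2}/4)^{1/2-s}$); finally $\tfrac{d}{ds}$ of $n^{-2s}\sigma_{s}(\chi_{4};n^{2})$, $n^{2s-2}\sigma_{1-s}(\chi_{4};n^{2})$ and $\sigma_{\pm s}(\chi_{4};n^{2})$, using $\sigma_{s}'(\chi;m)=\sum_{d\mid m}\chi(d)d^{s}\log d$, produces the $\sigma_{\pm1/2}'(\chi_{4};n^{2})$ and $\log(n)\,n^{-1}\sigma_{1/2}(\chi_{4};n^{2})$ terms. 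Assembling everything and using $\widehat{\omega}(1)=\int_{0}^{\infty}\omega(y)\,dy$ together with $\sigma_{s}(\chi_{4};(n/2)^{2})=\sigma_{s}(\chi_{4};n^{2})$ gives the claimed identity; the main pitfall is keeping straight which of $\log 2$, $\log\pi$, $\log n$, $\gamma$ originates from which factor and confirming the residue cancellation with the correct constants, since that cancellation is the actual content of the lemma.
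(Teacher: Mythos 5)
Your proposal is correct and follows essentially the same route as the paper: both establish that the simple pole of $M^{D}_{\text{even}}$ coming from $\zeta(2s)$ and that of $M^{C}$ coming from $\Gamma(s-1/2)$ cancel at $s=1/2$ (your identity $g_{C}(1/2)=-\tfrac12\widehat{\omega}(1)g_{D}(1/2)$ is exactly the paper's cancellation of $\zeta(1\pm 2u)$ poles), and then both extract the constant term of the Laurent expansion. The only cosmetic difference is that the paper first applies the functional equation $\zeta(2u)\Gamma(u)=(2\pi)^{2u}\Gamma(1-2u)\zeta(1-2u)/\Gamma(1-u)$ to put the two summands in a manifestly symmetric form before letting $u=s-1/2\to 0$ via L'H\^opital, whereas you expand directly using the special values $\zeta(0)=-1/2$ and $\zeta'(0)/\zeta(0)=\log(2\pi)$, which encode the same information.
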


\begin{proof}
As a consequence of the functional equations for the Riemann zeta function and for the Gamma function, we obtain
\begin{equation}\label{eq:zetagamma}
\zeta(2u)\Gamma(u)=(2\pi)^{2u}\frac{\Gamma(1-2u)}{\Gamma(1-u)}\zeta(1-2u).
\end{equation}
Combining   \eqref{eq:mevendef}, \eqref{eq:mcns} and \eqref{eq:zetagamma} we conclude that
\begin{multline}
M^{C}(n,1/2+u)+M^{D}_{\text{even}}(n,1/2+u)=\\
\zeta(1+2u)\frac{\sigma_{-1/2-u}(\chi_4;n^2)+n^{-1-2u}\sigma_{1/2+u}(\chi_4;n^2)}{L(\chi_4,3/2+u)}
\int_{0}^{\infty}\omega(y)dy\\
+\zeta(1-2u)(2\pi)^{u}\frac{\Gamma(1-2u)}{\Gamma(1-u)}\frac{\sigma_{-1/2+u}(\chi_4;n^2)+n^{-1+2u}\sigma_{1/2-u}(\chi_4;n^2)}{L(\chi_4,3/2-u)}\\ 
\times \sqrt{2}
\biggl(
\sin (\pi/4+\pi u/2)\int_{0}^{n/2}\omega(y)\left( \frac{n^2}{4}-y^2\right)^{-u}dy\\
+\cos(\pi/4+\pi u/2)\int_{n/2}^{\infty}\omega(y)\left(y^2- \frac{n^2}{4}\right)^{-u}dy\biggr).
\end{multline}
The expression above is holomorphic at $u=0$. Consequently, letting $u$ tend to zero and applying the L'H\^{o}spital rule, we prove the lemma.
\end{proof}

%%%%%%%%%%%%%%%%%%%%%%%%%%%%%%%%%%%%%%%%%%%%%%%%%%%%%%%%%%%%%%%%%%%%

%%%%%%%%%%%%%%%%%%%%%%%%%%%%%%%%%%%%%%%%%%%%%%%%%%%%%%%%%%%%%
\begin{lem}
For $n$ odd the following identity holds
\begin{multline}
\frac{1}{2}M^{C}(n,1/2)+M^{D}_{\text{odd}}(n,1/2)=\frac{\sigma_{-1/2}(\chi_4;n^2)}{2L(\chi_4,3/2)} \int_{0}^{\infty}\omega(y) \times \\
\left( \log|y^2-n^2/4|+\frac{\pi}{2}\text{sgn}(y-n/2)-2\frac{L'(\chi_4,3/2)}{L(\chi_4,3/2)}-\log(2\pi)+3\gamma
\right)dy\\
-\frac{\sigma'_{-1/2}(\chi_4;n^2)}{2L(\chi_4,3/2)}\int_{0}^{\infty}\omega(y)dy.
\end{multline}
\end{lem}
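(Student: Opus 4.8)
The plan is to reduce to the preceding lemma (the one for even $n$) by exploiting the twist relation \eqref{eq:sigmatwist}, which is available here precisely because $n$ is odd. Writing \eqref{eq:sigmatwist} in the equivalent form $\sigma_u(\chi_4;n^2)=n^{2u}\sigma_{-u}(\chi_4;n^2)$ for odd $n$, one gets $n^{-2s}\sigma_s(\chi_4;n^2)=\sigma_{-s}(\chi_4;n^2)$, so that comparing \eqref{eq:mevendef} with \eqref{eq:mdodddef} yields $M^{D}_{\text{even}}(n,s)=2M^{D}_{\text{odd}}(n,s)$ for odd $n$. Since $M^{C}(n,s)$ is given by the same formula \eqref{eq:mcns} in both theorems, this immediately gives
\[
\frac{1}{2}M^{C}(n,1/2)+M^{D}_{\text{odd}}(n,1/2)=\frac{1}{2}\Bigl(M^{C}(n,1/2)+M^{D}_{\text{even}}(n,1/2)\Bigr),
\]
and I would apply the preceding lemma to the right-hand side.

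It then remains to collapse the resulting expression into the asserted shorter form using \eqref{eq:sigmatwist} once more, together with its derivative. At $u=0$ the twist relation gives $n^{-1}\sigma_{1/2}(\chi_4;n^2)=\sigma_{-1/2}(\chi_4;n^2)$, hence $\sigma_{-1/2}(\chi_4;n^2)+n^{-1}\sigma_{1/2}(\chi_4;n^2)=2\sigma_{-1/2}(\chi_4;n^2)$. Differentiating the identity $n^{-1-2u}\sigma_{1/2+u}(\chi_4;n^2)=\sigma_{-1/2-u}(\chi_4;n^2)$ with respect to $u$ and setting $u=0$ gives $n^{-1}\sigma'_{1/2}(\chi_4;n^2)-2\log(n)\,n^{-1}\sigma_{1/2}(\chi_4;n^2)=-\sigma'_{-1/2}(\chi_4;n^2)$, so that the numerator $\sigma'_{-1/2}(\chi_4;n^2)-n^{-1}\sigma'_{1/2}(\chi_4;n^2)+2\log(n)\,n^{-1}\sigma_{1/2}(\chi_4;n^2)$ appearing in the preceding lemma equals $2\sigma'_{-1/2}(\chi_4;n^2)$. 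Substituting these two simplifications into the formula of the preceding lemma and dividing by $2$ produces exactly the claimed identity.

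As an (essentially equivalent) alternative, one can repeat the argument of the preceding lemma verbatim: combine \eqref{eq:mdodddef}, \eqref{eq:mcns} with the functional equation \eqref{eq:zetagamma}, use \eqref{eq:sigmatwist} for odd $n$ to reduce the divisor-sum factors in $M^{D}_{\text{odd}}(n,1/2+u)$ and in $M^{C}(n,1/2+u)$ to $\sigma_{-1/2-u}(\chi_4;n^2)$ and $\sigma_{-1/2+u}(\chi_4;n^2)$ respectively, note that the resulting sum of a $\zeta(1+2u)$-term and a $\zeta(1-2u)$-term is holomorphic at $u=0$, and let $u\to 0$ via the L'H\^{o}spital rule. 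I do not expect any real obstacle: every analytic input (the $\Gamma$- and $\zeta$-functional equations, holomorphy at the central point, the L'H\^{o}spital limit) is identical to the even case, and the only genuinely new bookkeeping is differentiating the twist relation \eqref{eq:sigmatwist} a single time, which is elementary.
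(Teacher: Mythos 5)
Your proof is correct, and the analytic core is the same as the paper's. The alternative you sketch at the end --- combining \eqref{eq:mdodddef} and \eqref{eq:mcns} with \eqref{eq:zetagamma}, using \eqref{eq:sigmatwist} to collapse the two divisor sums in $M^{C}$ into a single $\sigma_{-1/2\pm u}(\chi_4;n^2)$ factor, and then taking $u\to 0$ by L'H\^{o}spital --- is precisely the paper's proof. Your primary route, reducing to the even-$n$ lemma via $M^{D}_{\text{even}}(n,s)=2M^{D}_{\text{odd}}(n,s)$ for odd $n$ and then simplifying with the twist relation and its $u$-derivative, is a legitimate and slightly tidier repackaging; the identities $\sigma_{-1/2}+n^{-1}\sigma_{1/2}=2\sigma_{-1/2}$ and $\sigma'_{-1/2}-n^{-1}\sigma'_{1/2}+2\log(n)\,n^{-1}\sigma_{1/2}=2\sigma'_{-1/2}$ both check out. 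The one point you should make explicit in that route is that the preceding lemma is \emph{stated} only for even $n$, so you are really invoking the fact that its proof is a purely formal computation with the expressions \eqref{eq:mevendef} and \eqref{eq:mcns} that never uses the parity of $n$; once that observation is recorded, the reduction is airtight.
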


\begin{proof}
Using \eqref{eq:mcns},  \eqref{eq:mdodddef}, \eqref{eq:sigmatwist} and \eqref{eq:zetagamma} we obtain
\begin{multline}
\frac{1}{2}M^{C}(n,1/2+u)+M^{D}_{\text{odd}}(n,1/2+u)=\\
\zeta(1+2u)\frac{\sigma_{-1/2-u}(\chi_4;n^2)}{L(\chi_4,3/2+u)} \int_{0}^{\infty}\omega(y)dy\\
+\zeta(1-2u)(2\pi)^{u}\frac{\Gamma(1-2u)}{\Gamma(1-u)}\frac{\sigma_{-1/2+u}(\chi_4;n^2)}{L(\chi_4,3/2-u)}\\ 
\times \sqrt{2}
\biggl(
\sin (\pi/4+\pi u/2)\int_{0}^{n/2}\omega(y)\left( \frac{n^2}{4}-y^2\right)^{-u}dy\\
+\cos(\pi/4+\pi u/2)\int_{n/2}^{\infty}\omega(y)\left(y^2- \frac{n^2}{4}\right)^{-u}dy\biggr).
\end{multline}
The expression above is holomorphic at $u=0$. Therefore, the assertion follows by letting $u$ tend to zero and applying the L'H\^{o}spital rule.
\end{proof}

%%%%%%%%%%%%%%%%%%%%%%%%%%%%%%%%%%%%%%%%%%%%%%%%%%%%%%%%%%%%%%%%%%%%%%%%%%%%%%%%%%%%%%%
\section*{Acknowledgement}
The reported study was funded by RFBR, project number 19-31-60029.

%%%%%%%%%%%%%%%%%%%%%%%%%%%%%%%%%%%%%%%%%%%%%%%%%%%%%
\nocite{}

\end{document}